\NewDocumentCommand{\ceil}{s O{} m}{%
	\IfBooleanTF{#1} 
	{$\left\lceil#3\right\rceil$} 
	{#2\lceil#3#2\rceil} 
}
\newtheorem{theorem}{Theorem}
\newtheorem{definition}{Definition}
\newtheorem{lemma}{Lemma}
\newtheorem{coro}{Corollary}
\newtheorem{fact}{Fact}
\newtheorem{assum}{Assumption}
\newcommand{\inner}[2]{\left\langle #1, #2 \right\rangle}
\def\defeq{\mathrel{\mathop:}=}
\newcommand*{\belowrulesepcolor}[1]{%
	\noalign{%
		\kern-\belowrulesep
		\begingroup
		\color{#1}%
		\hrule height\belowrulesep
		\endgroup
	}%
}
\newcommand*{\aboverulesepcolor}[1]{%
	\noalign{%
		\begingroup
		\color{#1}%
		\hrule height\aboverulesep
		\endgroup
		\kern-\aboverulesep
	}%
}
\title{SpiderBoost and Momentum: Faster Stochastic Variance Reduction Algorithms}
\author{Zhe Wang \\
Department of ECE\\
The Ohio State University\\   
\texttt{wang.10982@osu.edu} \\ 
\And
 Kaiyi Ji  \\
Department of ECE\\
The Ohio State University\\   
\texttt{ji.367@osu.edu} \\ 
\And
Yi Zhou \\
Department of ECE\\
The University of Utah\\   
\texttt{yi.zhou@utah.edu} \\ 
\And
Yingbin Liang  \\
Department of ECE\\
The Ohio State University\\   
\texttt{liang.889@osu.edu} \\
\And
Vahid Tarokh  \\
Department of ECE\\
Duke University\\   
\texttt{vahid.tarokh@duke.edu} \\ 
%
}
\begin{document}

\maketitle

\begin{abstract}
SARAH and SPIDER are two recently developed stochastic variance-reduced algorithms, and SPIDER has been shown to achieve a near-optimal first-order oracle complexity in smooth nonconvex optimization. However, SPIDER uses an accuracy-dependent stepsize that slows down the convergence in practice, and cannot handle objective functions that involve nonsmooth regularizers. In this paper, we propose SpiderBoost as an improved scheme, which allows to use a much larger constant-level stepsize while maintaining the same near-optimal oracle complexity, and can be extended with proximal mapping to handle composite optimization (which is nonsmooth and nonconvex) with provable convergence guarantee. In particular, we show that proximal SpiderBoost achieves an oracle complexity of $\mathcal{O}(\min\{n^{1/2}\epsilon^{-2},\epsilon^{-3}\})$ in composite nonconvex optimization, improving the state-of-the-art result by a factor of $\mathcal{O}(\min\{n^{1/6},\epsilon^{-1/3}\})$. We further develop a novel momentum scheme to accelerate SpiderBoost for composite optimization, which achieves the near-optimal oracle complexity in theory and substantial improvement in experiments. 
%
\end{abstract}
\section{Introduction}

We consider the following finite-sum optimization problem
\begin{align}
\min_{x\in \mathbb{R}^d}	\Psi(x)  \defeq f(x) , ~\text{where}~ f(x) \defeq \frac{1}{n}\sum_{i=1}^{n} f_i(x)  \tag{P}
\end{align}
where the function $f$ denotes the total loss on the training samples and in general is nonconvex. Since large-scale machine learning problems can have very large sample size $n$, the full-batch gradient descent algorithm has high computational complexity. Thus, various stochastic gradient descent (SGD) algorithms have been proposed. For nonconvex optimization, the basic SGD algorithm, which calculates the gradient of one data sample per iteration, has been shown to yield an overall stochastic first-order oracle (SFO) complexity, i.e., gradient complexity, of $\mathcal{O}(\epsilon^{-4})$ \citep{Ghadimi2013} to attain a first-order stationary point $\bar{x}$ that satisfies $\mathbb{E}\|\nabla f(\bar{x})\|  \le \epsilon$. SGD algorithms with diminishing  step-size \citep{Ghadimi2013,Bottou2018} or a sufficiently large batch size \citep{ZhouHybird2018,Ghadimi2016} were also proposed to guarantee their convergence to a stationary point rather than its neighborhood. 
 
Furthermore, various variance reduction methods have been proposed, which construct more accurate stochastic gradient estimators than that of SGD, e.g., SAG \citep{Nicolas2012}, SAGA \citep{Defazio2014} and SVRG \citep{Johnson2013}. In particular, SAGA and SVRG have been shown to yield  an overall SFO complexity of $O(n^{2/3}\epsilon^{-2})$ \citep{Reddi2016b,Allen_Zhu2016}.
 Recently, \citep{Lam2017a,Lam2017b} proposed a variance reduction method called SARAH, where the gradient estimator is sequentially updated in the inner loop to improve the estimation accuracy. In particular, SARAH 
has been shown in \citep{Lam2017b} to achieve an overall $\mathcal{O}(\epsilon^{-4})$ SFO complexity for nonconvex optimization. Another variance reduction method called SPIDER was proposed in \citep{Fang2018}, which uses the same gradient estimator as that of SARAH but adopts a normalized gradient update with a stepsize $\eta=\mathcal{O}(\epsilon/L)$. 
\citep{Fang2018} showed that SPIDER achieves an overall $\mathcal{O}(\min\{ n^{1/2}\epsilon^{-2}, \epsilon^{-3 }\})$ SFO, which was further shown to be optimal in the regime with $n \le \mathcal{O}(\epsilon^{-4})$. 

Though SPIDER is theoretically appealing, three important issues still require further attention. First, SPIDER requires a very restrictive stepsize $\eta=\mathcal{O}(\epsilon/L)$ to guarantee its convergence, which prevents SPIDER from making big progress even if it is possible. Relaxing such a condition appears not easy under its original convergence analysis framework. 
\begin{list}{$\bullet$}{\topsep=0.ex \leftmargin=0.15in \rightmargin=0.in \itemsep =-0.05in}
\item {\em This paper proposes a more practical SpiderBoost algorithm, which allows a much larger stepsize $\eta =\mathcal{O}(1/L)$ than SPIDER 
while retaining the same state-of-the-art complexity order as SPIDER (see \Cref{smooth_comparison} in Suppl). This is due to the new convergence analysis idea that we develop, which analyzes the increments of variables over each entire inner loop  rather than over each inner-loop iteration, and hence yields tighter bound and consequently more relaxed stepsize requirement. 
}
\end{list}

Second, the convergence analysis of SPIDER requires a very small per-iteration increment $\|x_{k+1}-x_k\|=\mathcal{O}(\epsilon/L)$, which is difficult to guarantee if one attempts to generalize it to a proximal algorithm for solving the composite optimization problem (see \Cref{sec_proximal}) that possibly involves nonsmoothness. Hence, generalizing SPIDER to the proximal setting with provable convergence guarantee is challenging.
\begin{list}{$\bullet$}{\topsep=0.ex \leftmargin=0.15in \rightmargin=0.in \itemsep =-0.05in}
\item {\em 
Our SpiderBoost has a natural generalization, i.e., the Prox-SpiderBoost algorithm, which can be applied to solve composite optimization problems. 
We show that Prox-SpiderBoost achieves a SFO  complexity of $\mathcal{O}( n^{1/2}\epsilon^{-2} )$ and a proximal oracle (PO)  complexity of $\mathcal{O}(\epsilon^{-2})$, which improves the existing best results by a factor of $\mathcal{O}( n^{1/6} )$ (see \Cref{comparison_nonsmooth}). 
}
\end{list}

Third, although SPIDER achieves the near-optimal oracle complexity in nonconvex optimization, its practical performance has been found \cite{Lam2017b,Fang2018} to be hardly advantageous over SVRG. Therefore, it is of vital importance to exploit other algorithmic dimensions to further improve the practical performance of SPIDER, and momentum is such a promising perspective. However, the existing analysis of variance-reduced algorithms has been explored for SVRG only in certain {\em convex} scenarios \cite{Nitanda2016,Allen-Zhu2017,Allen-Zhu2018,Shang2018} and under a local gradient dominance geometry in nonconvex optimization \cite{Li2017}. Therefore, it is not even clear whether a certain momentum scheme can be applied to SPIDER and yield the optimal oracle gradient complexity for general nonconvex optimization.
\begin{list}{$\bullet$}{\topsep=0.ex \leftmargin=0.15in \rightmargin=0.in \itemsep =-0.05in}
\item {\em This paper proposes a momentum scheme to accelerate the Prox-SpiderBoost, named Prox-SpiderBoost-M, for composite optimization. We show that Prox-SpiderBoost-M achieves an oracle complexity order of $O(n+\sqrt{n} \epsilon^{-2})$, matching the complexity lower bound for nonconvex optimization. In contrast to the existing analysis for stochastic algorithms with momentum \cite{Ghadimi2016prox} for nonconvex optimization, our proof exploits the martingale structure of the gradient estimator to bound the variance term and its accumulations over the entire optimization path in a tight way under the momentum scheme. 
}
\end{list}

Due to space limitation, we relegate several other results to the supplementary materials, including analysis of Prox-SpiderBoost under non-Euclidean geometry and Polyak-{\L}ojasiewicz condition, and analysis of both Prox-SpiderBoost and Prox-SpiderBoost-M for online nonconvex composite optimization.


\renewcommand{\arraystretch}{0.6} 
\definecolor{LightCyan}{rgb}{0.88,1,1}
\begin{table*}[t] 
	\small
	\centering 
	\caption{Comparison of SFO complexity and PO complexity for composite optimization.} \label{comparison_nonsmooth}
	\vspace{2mm}
\scalebox{0.9}{	
	\begin{threeparttable} 
			\begin{tabular}{clllcll} \toprule
				\multirow{2}{*}{Algorithms}& &\multirow{2}{*}{Stepsize $\eta$} &\multicolumn{2}{c}{Finite-Sum}       &\multicolumn{2}{c}{Finite-Sum/Online\footnotemark} \\ 
				\cmidrule{4-5}   \cmidrule{6-7} 
				&  & &\multicolumn{1}{c}{SFO } &\multicolumn{1}{c}{PO}    &\multicolumn{1}{c}{SFO} &\multicolumn{1}{c}{PO} \\   \midrule
				
				ProxGD  &\citep{Ghadimi2016} &$\mathcal{O}(L^{-1})$  & $\mathcal{O}(n \epsilon^{-2} )$   &  $\mathcal{O}( \epsilon^{-2} )$ &N/A  &N/A \\ \midrule
				
				ProxSGD &\citep{Ghadimi2016} &$\mathcal{O}(L^{-1})$  & N/A   &  N/A &$\mathcal{O}(\epsilon^{-4}  )$   &$\mathcal{O}( \epsilon^{-2} )$ \\ \midrule

				ProxSVRG/SAGA &\citep{Reddi2016} &$\mathcal{O}(L^{-1})$ &  $\mathcal{O}(n+ n^{2/3} \epsilon^{-2}  )$    & $\mathcal{O}( \epsilon^{-2} )$  &N/A &N/A \\ \midrule
				
				Natasha1.5  &\citep{Allen_Zhu2017} &$\mathcal{O}(\epsilon^{2/3}L^{-2/3})$ &N/A &N/A    &  $\mathcal{O}(\epsilon^{-3}+  \epsilon^{-10/3}   )$    & $\mathcal{O}( \epsilon^{-10/3} )$  \\ \midrule
				
				ProxSVRG$^\textbf{+}$ &\citep{Li2018} 
			  &$\mathcal{O}(L^{-1})$&$\mathcal{O}(n + n^{2/3}\epsilon^{-2})$& $\mathcal{O}(\epsilon^{-2})$ &   $\mathcal{O}(  \epsilon^{-10/3}  )$    & $\mathcal{O}( \epsilon^{-2} )$   \\  \toprule
				
				\belowrulesepcolor{LightCyan}   
				\rowcolor{LightCyan}
				Prox-SpiderBoost &(This Work) &$\mathcal{O}(L^{-1})$ & $\mathcal{O}(n+ n^{1/2}\epsilon^{-2})$&  $\mathcal{O}(\epsilon^{-2})$ &    $\mathcal{O}(  \epsilon^{-2} + \epsilon^{-3 }  )$    & $\mathcal{O}( \epsilon^{-2} )$  \\   \aboverulesepcolor{LightCyan}  \bottomrule
		\end{tabular}  	 
		\vspace{2mm}
		\begin{tablenotes}\small
			\item[1] {The online setting refers to the case where the objective function takes the form of the expected value of the loss function over the data distribution. Such a method can also be applied to solve the finite-sum problem, and hence the SFO complexity in the last column is applicable to both the finite-sum and online problems. Thus, for algorithms that have SFO bounds available in both of the last two columns, the minimum between the two bounds provides the best bound for the finite-sum problem.}
		\end{tablenotes}
	\end{threeparttable} 
}
\vspace{-3mm}
\end{table*}

\vspace{-3mm}
\subsection{Related Work}


{\bf Stochastic algorithms for smooth nonconvex optimization:} 
The convergence analysis for SGD was studied in \citep{Ghadimi2016} for smooth nonconvex optimization. SGD with diminishing stepsize and sufficiently large batch size were further studied in \citep{Ghadimi2016,Bottou2018,ZhouHybird2018} to improve the performance. Various variance-reduced algorithms have been proposed and studied, including, e.g., SAG \citep{Nicolas2012}, SAGA \citep{Defazio2014}, SVRG \citep{Johnson2013,Reddi2016b,Allen_Zhu2016}, SCSG \citep{Lei2017}, SNVRG \citep{Zhou2018}, SARAH \cite{Lam2017a,Lam2017b,Nguyen2019withSARAH,Pham2019}, SPIDER \cite{Fang2018}. 
In particular, SPIDER has been shown in \citep{Fang2018} to achieve the oracle complexity lower bound for a certain regime. Such an idea has also been extended for optimization over manifolds in \citep{ZhouPan2018,Zhang2018}, zeroth-order optimization in \cite{pmlr-v97-ji19a}, ADMM in \cite{Huang2019ICML}, zeroth-order ADMM in \cite{huang2019nonconvex}, problem with nonsmooth nonconvex regularizer in \cite{Xu2019}, stochastic composite optimization in \cite{zhang2019stochastic}, noisy gradient descent in \cite{LiZhize2019}, and an adaptive batch size scheme in \cite{ji2019faster}. 
Our study here proposes a SpiderBoost algorithm, which substantially improves the stepsize of SPIDER while retaining the same performance guarantee and performs much faster than SPIDER in practice.

 
{\bf Stochastic algorithms for composite nonconvex optimization:} Proximal SGD has been proposed and studied by \citep{Ghadimi2013,Ghadimi2016prox} to solve composite nonconvex optimization problems. Moreover, variance reduced algorithms such as Prox-SVRG and Prox-SAGA \cite{Reddi2016}, Natasha1.5 \cite{Allen_Zhu2017}, and ProxSVRG$^+$ \cite{Li2018} have also been proposed to further improve the performance. Our study proposes Prox-SpiderBoost, which order-level outperforms all the existing algorithms for composite nonconvex optimization.


\textbf{Momentum schemes for nonconvex optimization:}
For nonconvex optimization, \cite{Ghadimi2016prox} established convergence of SGD with momentum to an $\epsilon$-first-order stationary point with an oracle complexity of $O(\epsilon^{-4})$. The convergence guarantee of SVRG with momentum has been explored under a certain local gradient dominance geometry in nonconvex optimization \cite{Li2017}. Here, we propose Prox-SpiderBoost-M which achieves the complexity lower bound for a certain regime, and practically substantially outperforms existing variance reduced algorithms with momentum.

\vspace{-0.1cm}
\section{SpiderBoost for Nonconvex Optimization}
\subsection{SpiderBoost Algorithm}\label{sec:vanilla_spider_plus}
In this section, we introduce the SpiderBoost algorithm designed for the problem (P). 
In \citep{Lam2017a}, a novel gradient estimator was introduced for reducing the variance. More specifically, consider a certain inner loop $\{x_k\}_{k=0}^{q-1}$. The initialization of the estimator is set to be $v_0 = \nabla f(x_0)$. Then, for each subsequent iteration $k$, an index set $S$ is sampled and the corresponding estimator $v_k$ is constructed as
\begin{align}
v_{k} &= \frac{1}{|S|}\sum_{i \in S} \big[\nabla f_i(x_{k}) - \nabla f_i(x_{k-1}) + v_{k-1} \big]. \label{spider} 
\end{align}
 It can be seen that the estimator in \cref{spider} is constructed iteratively based on the information $x_{k-1}$ and $v_{k-1}$ that are obtained from the previous update. As a comparison, the SVRG estimator \cite{Johnson2013} is constructed based on the information of the initialization of that loop (i.e., replace $x_{k-1}$ and $v_{k-1}$ in \cref{spider} with $x_0$ and $v_0$, respectively). Therefore, the estimator in \cref{spider} utilizes more fresh information and yields more accurate estimation of the full gradient. The estimator in \cref{spider} has been adopted by \citep{Lam2017a,Lam2017b} and \citep{Fang2018} for proposing   SARAH and SPIDER, respectively. 
In specific, SPIDER was shown in \citep{Fang2018} to be optimal in the regime with $n \le \mathcal{O}(\epsilon^{-4})$.

 \begin{figure}
 	\begin{minipage}{0.50\linewidth}
 		\begin{algorithm}[H]
 			\caption{SpiderBoost}\label{alg:spiderboost} 
 			{\bf Input:}  { $  \eta =\frac{1}{2L}$}, $q, K, |S|\in \mathbb{N}$.
 			
 			\For{ $k=0, 1, \ldots, K-1$}{
 				\eIf { $\textrm{mod}(k,q) = 0$ } {
 					Compute $v_{k} = \nabla f(x_k),$
 				}{
 					Draw $|S|$ samples with replacement.\\
 					Compute $v_k$ according to \cref{spider}.\\
 				}
 				{$x_{k+1} = x_k - \eta  v_k$.}
 			}
 			{ {\bf Output:} $x_\xi$, where  $\xi \overset{\text{Unif}}{\sim} \{0,\ldots,K-1\}$.}
 		\end{algorithm} 	
 	\end{minipage} 
 	\begin{minipage}{0.50\linewidth} 
 		\begin{algorithm}[H]
 			\caption{Prox-SpiderBoost}\label{alg:Proximal_SPIDER+} 
 			{\bf Input:}  { $  \eta =\frac{1}{2L}$}, $q, K, |S|\in \mathbb{N}$.
 			
 			\For{ $k=0, 1, \ldots, K-1$}{
 				\eIf { $\textrm{mod}(k,q) = 0$ } {
 					Compute $v_{k} = \nabla f(x_k),$
 				}{
 					Draw $|S|$ samples with replacement.\\
 					Compute $v_k$ according to \cref{spider}.\\
 				}
 				{ $x_{k+1} = \mathrm{prox}_{\eta h}(x_k - \eta v_k)$.}}
 			{ {\bf Output:} $x_\xi$,  where  $\xi \overset{\text{Unif}}{\sim} \{0,\ldots,K-1\}$.}
 		\end{algorithm}
 	\end{minipage} 
 \end{figure} 

Though SPIDER has desired performance in theory, it can run very slowly in practice due to the choice of a conservative stepsize. In specific, SPIDER uses a very small stepsize $\eta =\mathcal{O} (\frac{\epsilon}{L})$ (where $\epsilon$ is the desired accuracy) in normalized gradient descent, which yields small increment per iteration, i.e.,  $\|x_{k+1} -x_k\| = \mathcal{O} (\epsilon)$. By following the analysis of SPIDER, such a stepsize appears to be necessary in order to achieve the desired convergence rate. 

Such a conservative stepsize adopted by SPIDER motivates our design of an improved algorithm named SpiderBoost (see \Cref{alg:spiderboost}), which uses the same estimator \cref{spider} as SARAH and SPIDER, but adopts a much larger stepsize $\eta=\frac{1}{2L}$, as opposed to $\eta=\mathcal{O}(\frac{\epsilon}{L})$ taken by SPIDER. 
Also, SpiderBoost updates the variable via a gradient descent step (same as SARAH), as opposed to the normalized gradient descent step taken by SPIDER. Furthermore, SpiderBoost generates the output variable via a random strategy whereas SPIDER outputs deterministically. Collectively, SpiderBoost can make a considerably larger progress per iteration than SPIDER, especially in the initial optimization phase where the estimated gradient norm $\|v_k\|$ is large, and is still guaranteed to achieve the same desirable convergence rate as SPIDER, as we show in the next subsection. We compare the empirical performance between SPIDER and SpiderBoost in \Cref{sec:exp_spider_spiderboost}.

\vspace{-3mm}
\subsection{Convergence Analysis of SpiderBoost}
In this subsection, we study the convergence rate and complexity of SpiderBoost. In particular, we adopt the following standard assumptions.
\begin{assum}\label{assum: f}
	The objective function in the problem (P) satisfies:
	\vspace{-3mm}
	\begin{enumerate}[leftmargin=*]
		\item The object function $\Psi$ is  bounded below, i.e., $\Psi^*:=\inf_{x\in \mathbb{R}^d} \Psi(x) > -\infty$;
		\item Each gradient $\nabla f_i, i=1,...,n$ is $L$-Lipschitz continuous, i.e., $ \forall  x,y\in \mathbb{R}^d$,
		$\|\nabla f_i (x) - \nabla f_i (y)\| \le L\|x-y\|.$
	\end{enumerate}
\end{assum}
\vspace{-3mm}

\Cref{assum: f} essentially assumes that the smooth objective function has a non-trivial minimum and its gradient is Lipschitz continuous, which are valid and standard conditions in nonconvex optimization.
Then, we obtain the following convergence result for SpiderBoost.
	\begin{theorem}\label{finite_sum_thm}
	Let \Cref{assum: f} hold and apply SpiderBoost in \Cref{alg:spiderboost} to solve the problem (P) with parameters $q = |S| = \sqrt{n}$ and stepsize $\eta  = \frac{1}{2L}$. Then, the corresponding output $x_\xi$ satisfies $\mathbb{E}\|\nabla f(x_\xi)\| \le \epsilon$ provided that  the total number $K$ of iterations satisfies 
	\begin{align*}
		K \ge \mathcal{O}\Big(\frac{L(f(x_0) -  f^* )}{\epsilon^2}\Big).
	\end{align*}
	Moreover, the overall SFO complexity is $\mathcal{O}( {\sqrt{n}}{\epsilon^{-2}} +n)$.
	\end{theorem}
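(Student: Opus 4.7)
The plan is to combine the standard descent lemma for $L$-smooth functions with a careful variance bound for the SARAH/SPIDER-type estimator \eqref{spider}, and then, as the authors advertise, to do the accounting over a full inner loop of length $q$ rather than iteration by iteration. This is precisely what lets the stepsize be $\eta = 1/(2L)$ instead of the $\epsilon$-level stepsize used by SPIDER.

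First, I would control the error of the estimator inside an inner loop. Fix the outer-loop anchor index $k_0$ with $v_{k_0}=\nabla f(x_{k_0})$. Each increment $v_k-v_{k-1} - (\nabla f(x_k)-\nabla f(x_{k-1}))$ is, conditionally on the past, a mean-zero average of $|S|$ i.i.d.\ bounded-variance terms, and by $L$-smoothness of each $f_i$ each term has second moment at most $L^2\|x_k-x_{k-1}\|^2$. A telescoping/orthogonality argument on these martingale differences then gives
\begin{align*}
\mathbb{E}\|v_k - \nabla f(x_k)\|^2 \le \frac{L^2}{|S|}\sum_{j=k_0+1}^{k}\mathbb{E}\|x_j-x_{j-1}\|^2 = \frac{L^2\eta^2}{|S|}\sum_{j=k_0}^{k-1}\mathbb{E}\|v_j\|^2,
\end{align*}
using $x_{j}-x_{j-1}=-\eta v_{j-1}$. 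Next, from the descent lemma applied to $x_{k+1}=x_k-\eta v_k$ and the identity $-\langle \nabla f(x_k),v_k\rangle=\tfrac12\|v_k-\nabla f(x_k)\|^2-\tfrac12\|\nabla f(x_k)\|^2-\tfrac12\|v_k\|^2$, I would obtain the per-step inequality
\begin{align*}
f(x_{k+1}) \le f(x_k) - \frac{\eta}{2}\|\nabla f(x_k)\|^2 - \frac{\eta(1-L\eta)}{2}\|v_k\|^2 + \frac{\eta}{2}\|v_k-\nabla f(x_k)\|^2.
\end{align*}

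Now comes the loop-level accounting, which is the key step and the one that I expect to be the main obstacle to set up cleanly. I would sum the descent inequality across a single epoch $k=k_0,\dots,k_0+q-1$ and then plug in the variance bound above. The double sum $\sum_{k}\sum_{j<k}\mathbb{E}\|v_j\|^2$ is at most $q\sum_{j}\mathbb{E}\|v_j\|^2$, so the aggregated error term is bounded by $\frac{\eta L^2\eta^2 q}{2|S|}\sum_j \mathbb{E}\|v_j\|^2$. With the choices $q=|S|=\sqrt{n}$ and $\eta=1/(2L)$, the coefficient becomes $\eta/8$, which is strictly dominated by the $\eta(1-L\eta)/2 = \eta/4$ coming from the $\|v_k\|^2$ term. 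Hence the $\|v_k\|^2$ contributions cancel with room to spare, and I get the clean one-epoch bound
\begin{align*}
\mathbb{E}[f(x_{k_0+q})] \le f(x_{k_0}) - \frac{\eta}{2}\sum_{k=k_0}^{k_0+q-1}\mathbb{E}\|\nabla f(x_k)\|^2.
\end{align*}

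Finally, I would telescope this bound across all epochs up to iteration $K$, use $f\ge f^*$, and divide by $K$ to obtain $\frac{1}{K}\sum_{k=0}^{K-1}\mathbb{E}\|\nabla f(x_k)\|^2 \le \frac{4L(f(x_0)-f^*)}{K}$. Since $\xi$ is drawn uniformly from $\{0,\dots,K-1\}$, Jensen's inequality gives $(\mathbb{E}\|\nabla f(x_\xi)\|)^2 \le \mathbb{E}\|\nabla f(x_\xi)\|^2$, so $K=\mathcal{O}(L(f(x_0)-f^*)/\epsilon^2)$ suffices. For the SFO count, each epoch uses $n$ samples for the anchor gradient plus $(q-1)|S|=\mathcal{O}(n)$ samples for the $q-1$ inner updates, i.e.\ $\mathcal{O}(n)$ per epoch; the number of epochs is $K/q=\mathcal{O}(\sqrt{n}/\epsilon^2)\cdot(\text{const})/\sqrt{n}\cdot\sqrt{n}$, yielding a total of $\mathcal{O}(n+\sqrt{n}\,\epsilon^{-2})$ oracle calls, as claimed.
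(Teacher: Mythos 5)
Your proof is correct, and while it shares the paper's overall architecture (descent lemma, the SPIDER variance recursion telescoped from the epoch anchor, and accounting over a whole inner loop so that the double sum costs only a factor of $q$), it diverges at one genuinely useful point. You use the exact identity $-\langle \nabla f(x_k),v_k\rangle=\tfrac12\|v_k-\nabla f(x_k)\|^2-\tfrac12\|\nabla f(x_k)\|^2-\tfrac12\|v_k\|^2$, which keeps an explicit $-\tfrac{\eta}{2}\|\nabla f(x_k)\|^2$ in every per-step inequality; the paper instead applies Young's inequality to $-\eta\langle\nabla f(x_k)-v_k,v_k\rangle$, obtains a bound only on $\sum_k\mathbb{E}\|v_k\|^2$, and must then recover $\mathbb{E}\|\nabla f(x_\xi)\|^2$ at the end via the decomposition $\|\nabla f(x_\xi)\|^2\le 2\|\nabla f(x_\xi)-v_\xi\|^2+2\|v_\xi\|^2$, a second application of the variance bound, and an argument that the random index $\xi$ lands in any given epoch with probability at most $q/K$ (their eq.~\cref{final_0}--\cref{final_2}). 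Your route skips that entire second stage, telescopes directly to $\frac1K\sum_k\mathbb{E}\|\nabla f(x_k)\|^2\le 4L(f(x_0)-f^*)/K$, and yields a better constant ($4$ versus the paper's $40$); the paper's less direct route has the offsetting advantage that the intermediate bound on $\sum_k\mathbb{E}\|v_k\|^2$ and on $\mathbb{E}\|\nabla f(x_\xi)-v_\xi\|^2$ is reused verbatim in the proximal extension (Theorem~\ref{p_finite_sum_thm_nonEuclidean}), where $\nabla f(x_k)$ no longer appears as a clean cross term. Two trivial housekeeping points you should make explicit: the one-epoch bound must also be stated for a possibly partial final epoch when $q\nmid K$ (the same $\le q\sum$ estimate applies with fewer terms), and the coefficient bookkeeping $\tfrac{\eta}{4}-\tfrac{\eta}{8}>0$ you carried out is exactly the condition $\beta_1>0$ the paper isolates.
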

\Cref{finite_sum_thm} shows that the output of SpiderBoost achieves the first-order stationary condition within $\epsilon$ accuracy with a total SFO complexity $\mathcal{O}({\sqrt{n}}{\epsilon^{-2}} +n)$. This matches the  lower bound that one can expect for first-order algorithms in the regime $n \le \mathcal{O}(\epsilon^{-4})$ \citep{Fang2018}. 
As we explain in \Cref{sec:vanilla_spider_plus}, SpiderBoost enhances SPIDER mainly due to the utilization of a large constant stepsize, which yields significant acceleration over SPIDER in practice  as we illustrate in the experiments in \Cref{sec:exp_spider_spiderboost}. 

We note that the analysis of SpiderBoost in \Cref{finite_sum_thm} is very different from that of SPIDER that depends on an $\epsilon$-level stepsize and the normalized gradient descent step to guarantee a constant increment $\|x_{k+1} - x_k\|$ in every iteration. In contrast, SpiderBoost exploits the special structure of gradient estimator and analyzes the algorithm over the entire inner loop rather than over each iteration, and thus yields a better bound.  

\newcommand\numberthis{\addtocounter{equation}{1}\tag{\theequation}}

\section{Prox-SpiderBoost for Nonconvex Composite Optimization} \label{sec_proximal}

In this section, we generalize SpiderBoost to solve the following nonconvex composite problem:
\begin{align*} {\min_{x\in \mathcal{X}}	\Psi(x)  \defeq f(x)  + h(x), \quad f(x) \defeq \frac{1}{n}\sum_{i=1}^{n} f_i(x)}  \tag{Q}
\end{align*}
where the function $f$ is possibly nonconvex, $h$ is a simple convex but possibly nonsmooth regularizer, and  $\mathcal{X}$ is a convex constrained set. To handle the nonsmoothness, we next introduce the proximal mapping which is an effective tool for composite optimization.

\subsection{Preliminaries on Proximal Mapping}
Consider a proper and lower-semicontinuous function $h$ (which can be non-differentiable). We define its proximal mapping at $x\in \mathbb{R}^d$ with parameter $\eta>0$ as
\begin{align*}  
\mathrm{prox}_{\eta h}(x):= \arg\min_{u\in \mathcal{X}}  \Big\{h(u) + \frac{1}{2 \eta}\|u-x\|^2 \Big\}.  
\end{align*}
Such a mapping is well defined and is unique particularly for convex functions. Furthermore, the proximal mapping can be used to generalize the first-order stationary condition of smooth optimization to nonsmooth composite optimization via the following fact.

\begin{fact}\label{fact: grad}
	Let $h$ be a proper and convex function. Define the following notion of generalized gradient
	\begin{align}
		G_\eta(x) := \frac{1}{\eta}\Big(x-\mathrm{prox}_{\eta h}(x-\eta \nabla f(x))\Big).
	\end{align}
	 Then, $x$ is a critical point of $\Psi:=f+h$ (i.e., $0\in \nabla f(x) + \partial h(x)$) if and only if $G_\eta(x) = 0.$
\end{fact}
\Cref{fact: grad} introduces a generalized notion of gradient for composite optimization. To elaborate, consider the case $h\equiv 0$ so that the proximal mapping becomes the identity mapping. Then, the generalized gradient $G_\eta(x)$ reduces to the gradient $\nabla f(x)$ of the unconstrained optimization. Therefore, the $\epsilon$-first-order stationary condition for composite optimization is naturally defined as $\|G_\eta(x)\|\le \epsilon$. 


\subsection{Prox-SpiderBoost and Oracle Complexity}

To generalize to composite optimization, SpiderBoost admits a natural extension Prox-SpiderBoost, whereas SPIDER encounters challenges. The main reason is because SpiderBoost admits a constant stepsize and its convergence guarantee does not have any restriction on the per-iteration increment of the variable. However, the convergence of SPIDER   requires the per-iteration increment of the variable to be at the $\epsilon$-level, which is challenging to satisfy under the nonlinear proximal operator in composite optimization.



The detailed steps of Prox-SpiderBoost (which generalizes SpiderBoost to composite optimization objectives) are described in \Cref{alg:Proximal_SPIDER+}. 
In particular, Prox-SpiderBoost updates the variable via a proximal gradient step to handle the possible nonsmoothness in composite optimization. 


%
%
%
%
%
%
%
%

We next characterize the oracle complexity of Prox-SpiderBoost for achieving the generalized $\epsilon$-first-order stationary condition.

\begin{theorem}\label{p_finite_sum_thm}	
	Let \Cref{assum: f} hold and consider the problem (Q) with $\mathcal{X} = \mathbb{R}^d$. Apply the Prox-SpiderBoost in \Cref{alg:Proximal_SPIDER+}  with parameters $q = |S| = \sqrt{n}$ and $\eta = \frac{1}{2L}$. Then, the corresponding output $x_\xi$ satisfies $\mathbb{E}\|G_{\eta}(x_\xi)\|  \le \epsilon$ provided that the total number $K$ of iterations satisfies 
	\begin{align*}
	K \ge \mathcal{O}\Big(\frac{L(\Psi(x_0) -  \Psi^* )}{\epsilon^2}\Big).
	\end{align*}
	Moreover, the SFO complexity is $\mathcal{O}( {\sqrt{n}}{\epsilon^{-2}} +n)$, and the proximal oracle (PO) complexity is $\mathcal{O}(\epsilon^{-2})$.
\end{theorem}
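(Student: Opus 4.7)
The plan is to mimic the over-inner-loop analysis of Theorem~1, but to replace the smooth descent step by a proximal one-step inequality that exploits the convexity of $h$. The main obstacle, relative to Theorem~1, is that the update $x_{k+1}-x_k$ is no longer simply $-\eta v_k$, so the clean polarization identity $\langle a,b\rangle=\tfrac12(\|a\|^2+\|b\|^2-\|a-b\|^2)$ used in the smooth case is unavailable; I plan to compensate by pulling an extra $-\tfrac{1}{\eta}\|x_{k+1}-x_k\|^2$ term out of the convexity of $h$, large enough to dominate the positive $\tfrac{L}{2}\|x_{k+1}-x_k\|^2$ coming from the $L$-smoothness of $f$ together with the cross term generated by the bias $\nabla f(x_k)-v_k$.

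First, I would establish a one-step composite descent. The proximal optimality of $x_{k+1}=\mathrm{prox}_{\eta h}(x_k-\eta v_k)$ gives $\tfrac{1}{\eta}(x_k-x_{k+1})-v_k\in\partial h(x_{k+1})$, and combining convexity of $h$ with the $L$-smoothness inequality for $f$ yields
\[
\Psi(x_{k+1})\le\Psi(x_k)+\langle\nabla f(x_k)-v_k,\,x_{k+1}-x_k\rangle+\Big(\tfrac{L}{2}-\tfrac{1}{\eta}\Big)\|x_{k+1}-x_k\|^2,
\]
whose last coefficient is $-3L/2$ under $\eta=1/(2L)$. Young's inequality with parameter $1/L$ then produces
\[
\Psi(x_{k+1})\le\Psi(x_k)+\tfrac{1}{2L}\|\nabla f(x_k)-v_k\|^2-L\|x_{k+1}-x_k\|^2.
\]

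Next, I would sum this over a single inner loop of length $q$, invoking the standard SPIDER-estimator variance recursion $\mathbb{E}\|v_k-\nabla f(x_k)\|^2\le\tfrac{L^2}{|S|}\sum_{j=k_0+1}^{k}\mathbb{E}\|x_j-x_{j-1}\|^2$, anchored by $v_{k_0}=\nabla f(x_{k_0})$ at the start of each inner loop. The choice $q=|S|=\sqrt{n}$ is calibrated precisely so that the resulting variance term is at most $\tfrac{L}{2}\sum_k\mathbb{E}\|x_{k+1}-x_k\|^2$, exactly half the descent term; hence per inner loop one obtains $\mathbb{E}\Psi(x_{k_0+q})-\Psi(x_{k_0})\le-\tfrac{L}{2}\sum_{k=k_0}^{k_0+q-1}\mathbb{E}\|x_{k+1}-x_k\|^2$. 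Telescoping across all $K/q$ inner loops and using $\Psi^*\le\mathbb{E}\Psi(x_K)$ yields $\sum_{k=0}^{K-1}\mathbb{E}\|x_{k+1}-x_k\|^2\le 2(\Psi(x_0)-\Psi^*)/L$.

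Finally, I would translate iterate displacements into the generalized gradient $G_\eta$ using non-expansiveness of $\mathrm{prox}_{\eta h}$: this gives $\|G_\eta(x_k)-\tfrac{1}{\eta}(x_k-x_{k+1})\|\le\|\nabla f(x_k)-v_k\|$, and hence $\|G_\eta(x_k)\|^2\le 8L^2\|x_{k+1}-x_k\|^2+2\|\nabla f(x_k)-v_k\|^2$. Summing this, applying the variance bound once more, and averaging against the uniform sampling of $\xi$ gives $\mathbb{E}\|G_\eta(x_\xi)\|^2=\mathcal{O}(L(\Psi(x_0)-\Psi^*)/K)$; Jensen's inequality then delivers the stated iteration complexity. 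The SFO count follows since each inner loop costs $n+(q-1)|S|=\mathcal{O}(n)$ gradient evaluations, giving $\mathcal{O}(n+K\sqrt{n})=\mathcal{O}(n+\sqrt{n}\epsilon^{-2})$ in total, and each iteration performs exactly one proximal step, yielding $\mathcal{O}(\epsilon^{-2})$ PO calls.
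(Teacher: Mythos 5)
Your proof is correct and follows essentially the same route as the paper's: a one-step descent on $\Psi$ obtained from the prox optimality condition plus convexity of $h$ (which, in the Euclidean case, is exactly the Ghadimi-et-al.\ projection inequality the paper invokes with $\alpha=1$), absorption of the SPIDER variance over each inner loop using $q=|S|=\sqrt{n}$, telescoping to bound $\sum_k\mathbb{E}\|x_{k+1}-x_k\|^2$, and conversion to $G_\eta$ via non-expansiveness of the proximal map. The only cosmetic differences are that the paper proves the general Bregman/constrained version (Theorem~3) and obtains this statement as the special case $V(x,y)=\tfrac12\|x-y\|^2$, and that your Young's-inequality weights differ slightly from theirs, yielding marginally different constants.
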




As a comparison, the SFO complexity $\mathcal{O}( {\sqrt{n}}{\epsilon^{-2}} +n)$ of Prox-SpiderBoost in \Cref{p_finite_sum_thm}  improves the existing complexity result by a factor of $n^{1/6}$ \cite{Li2018}. Furthermore, the complexity lower bound for achieving the $\epsilon$-first-order stationary condition in un-regularized optimization \citep{Fang2018} also serves as a lower bound for composite optimization (by considering the special case $h\equiv 0$). Therefore, the SFO complexity of our Prox-SpiderBoost matches the corresponding complexity lower bound in the regime with $ n \le \mathcal{O}(  \epsilon^{-4})$, and is hence near optimal. 

Moreover, our Prox-SpiderBoost still achieves the state-of-the-art convergence results under other settings such as online optimization, non-Euclidean geometry and Polyak-{\L}ojasiewicz condition. Due to the space limitation, we relegate these results  to \Cref{sec_online}.


\section{Accelerating Prox-SpiderBoost via Momentum}
In this section, we propose a proximal SpiderBoost algorithm that incorporates a momentum scheme (referred to as Prox-SpiderBoost-M) for solving the composite problem (Q), and study its theoretical guarantee as well as the oracle complexity. 

\vspace{-2mm}
\subsection{Algorithm Design}
\vspace{-2mm}
We present the detailed update rule of Prox-SpiderBoost-M in \Cref{alg: ProxSPIDERM}.
\begin{algorithm}
	\caption{Prox-SpiderBoost-M}
	\label{alg: ProxSPIDERM}
	{\bf Input:} $q, K \in \mathbb{N}, \{\lambda_k\}_{k=1}^{K-1}, \{\beta_k\}_{k=1}^{K-1} >0 $,   $y_0 = x_0\in \mathbb{R}^d$, and set $\alpha_{k} = \frac{2}{\ceil[]{k/q}+1}$.
	
	\For{$k=0, 1, \ldots, K-1$}
	{
		 $z_{k} = (1-\alpha_{k+1})y_{k} + \alpha_{k+1} x_{k}$, \\
		\eIf{$\text{mod}(k, q)= 0$}
		{
			set $v_{k} = \nabla f(z_k),$
		}
		{
			Draw $\xi_k$ samples with replacement and compute $v_k$ according to \cref{spider}.
		}
		$x_{k+1} =  \mathrm{prox}_{\lambda_{k} h}\big(x_{k} - \lambda_{k} v_k\big)$, \\
		$y_{k+1} = z_{k} - \frac{\beta_{k}}{\lambda_k}x_k + \frac{\beta_{k}}{\lambda_k}  \mathrm{prox}_{\lambda_{k} h}\big(x_{k} - \lambda_{k} v_k\big)$.
	}
	{\textbf{Output:} $z_\zeta$, where $\zeta \overset{\text{Unif}}{\sim} \{0,\ldots,K-1\}$.}
\end{algorithm}

To elaborate on the algorithm design, note that Prox-SpiderBoost-M generates a tuple of variable sequences $\{x_k,y_k,z_k\}_k$ according to the momentum scheme.
In specific, the variables $x_{k}$, $y_{k}$ are updated via proximal gradient-like steps using the gradient estimate $v_k$ proposed for SARAH in \cite{Lam2017a,Lam2017b} and different stepsizes $\lambda_k, \beta_{k}$, respectively. Then, their convex combination with momentum coefficient $\alpha_{k+1}$ yields the variable $z_{k+1}$. Here, we choose a standard momentum coefficient scheduling that diminishes epochwisely (see the expression for $\alpha_{k}$) for proving convergence guarantee in nonconvex optimization. We also note that the two updates for $x_{k+1}$ and $y_{k+1}$ do not introduce extra computation overhead as compared to a single update, since they both depend on the same proximal term.

We want to highlight the difference between our momentum scheme for Prox-SpiderBoost-M and the existing momentum scheme design for proximal SGD in \cite{Ghadimi2016prox} and proximal SVRG in \cite{Allen-Zhu2017}. In these works, they use the following proximal gradient steps for updating the variables $x_{k+1}$ and $y_{k+1}$: 
\begin{align}
x_{k+1} &= \mathrm{prox}_{\lambda_k h}\big(x_{k} - \lambda_{k} v_k\big), \qquad y_{k+1} = \mathrm{prox}_{\beta_k h}\big(z_{k} - \beta_{k} v_k\big). \label{eq: 26}
\end{align}
Note that \cref{eq: 26} use different proximal updates that are based on $x_k$ and $z_k$, respectively. As a comparison, our momentum scheme in \Cref{alg: ProxSPIDERM} applies the same proximal gradient term $\mathrm{prox}_{\lambda_{k} h}\big(x_{k} - \lambda_{k} v_k\big)$ to update both variables $x_{k+1}$ and $y_{k+1}$, and therefore requires less computation. Moreover, our update for the variable $y_{k+1}$ is not a single proximal gradient update (as opposed to \cref{eq: 26}), and it couples with the variables $z_k$ and $x_k$. 

The momentum scheme introduced in \cite{Allen-Zhu2017} was not proven to have a convergence guarantee in nonconvex optimization. In the next subsection, we prove that our momentum scheme in \Cref{alg: ProxSPIDERM} has a provable convergence guarantee for nonconvex composite optimization with convex regularizers. 

\vspace{-3mm}
\subsection{Convergence and Complexity Analysis}
In this subsection, we study the convergence guarantee of Prox-SpiderBoost-M for solving the problem (Q). We obtain the following main result.
\begin{theorem}\label{thm: ProxSpiderM}
	Let \Cref{assum: f} hold. Apply Prox-SpiderBoost-M (see \Cref{alg: ProxSPIDERM}) to solve the problem (Q) with parameters $q=|\xi_k|\equiv \sqrt{n}$, $\beta_k \equiv \frac{1}{8L}$ and $\lambda_k \in [\beta_k, (1+\alpha_k)\beta_{k}]$. Then, the output $z_{\zeta}$ produced by the algorithm satisfies $\mathbb{E}\|G_{\lambda_{\zeta}}(z_{\zeta}, \nabla f(z_{\zeta}))\| \le \epsilon$ for any $\epsilon>0$ provided that the total number  $K$ of iterations satisfies
	\begin{align}
	K \ge \mathcal{O}\bigg(\frac{L(\Psi(x_0)-\Psi^*)}{\epsilon^2} \bigg).
	\end{align}
	Moreover, the SFO complexity is at most $\mathcal{O}(n+\sqrt{n}\epsilon^{-2})$ and the PO complexity is at most $\mathcal{O}(\epsilon^{-2})$.
\end{theorem}

\Cref{thm: ProxSpiderM} establishes the convergence rate of Prox-SpiderBoost-M to satisfy the generalized first-order stationary condition and the corresponding oracle complexity. Specifically, the iteration complexity to achieve the generalized $\epsilon$-first-order stationary condition is in the order of $\mathcal{O}(\epsilon^{-2})$, which matches that of Prox-SpiderBoost. Furthermore, the corresponding SFO complexity $\mathcal{O}(n+\sqrt{n}\epsilon^{-2})$ matches the lower bound for nonconvex optimization \cite{Fang2018}. Therefore, Prox-SpiderBoost-M enjoys the same optimal convergence guarantee as that for the Prox-SpiderBoost  in nonconvex optimization, and it further benefits from the momentum scheme that can lead to significant acceleration in practical applications (as we demonstrate via experiments in \Cref{sec: exp}). 

From a technical perspective, we highlight the following three major new developments in the proof of \Cref{thm: ProxSpiderM} that is different from the proof for the basic stochastic gradient algorithm with momentum \cite{Ghadimi2016prox} for nonconvex optimization: 1) our proof exploits the martingale structure of the SPIDER estimate $v_k$ which allows to bound the mean-square error term $\mathbb{E}\|\nabla f(z_{k}) - v_k\|^2$ in a tight way under the momentum scheme. In traditional analysis of stochastic algorithms with momentum \cite{Ghadimi2016prox}, such an error term corresponds to the variance of the stochastic estimator and is assumed to be bounded by a universal constant. 2): Our proof requires a very careful manipulation of the bounding strategy to handle the accumulation of the mean-square error $\mathbb{E}\|\nabla f(z_{k}) - v_k\|^2$ over the entire optimization path. 

\section{Experiments}\label{sec: exp}

\subsection{Comparison between SpiderBoost and SPIDER}\label{sec:exp_spider_spiderboost}

\begin{figure}[ht]  
	\vspace{-0.3cm}
	\centering 
	\begin{subfigure}{.24\textwidth}
		\centering
		{\includegraphics[width=1\linewidth]{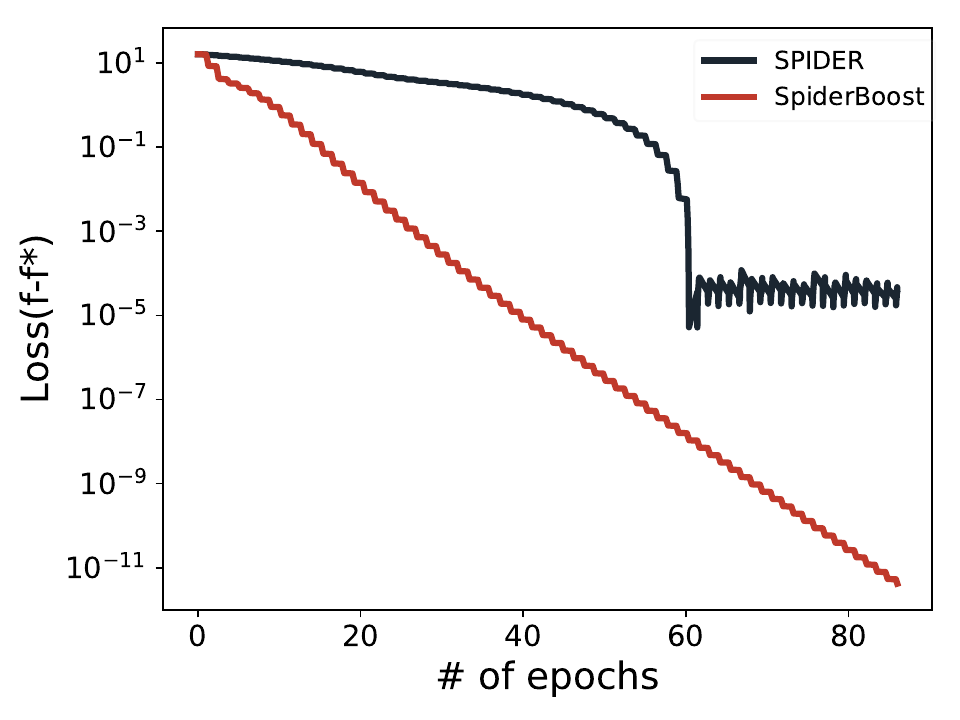}}  
		\caption{Dataset: a9a}
	\end{subfigure} 
	\begin{subfigure} {.24\textwidth}
		\centering
		{\includegraphics[width=1\linewidth]{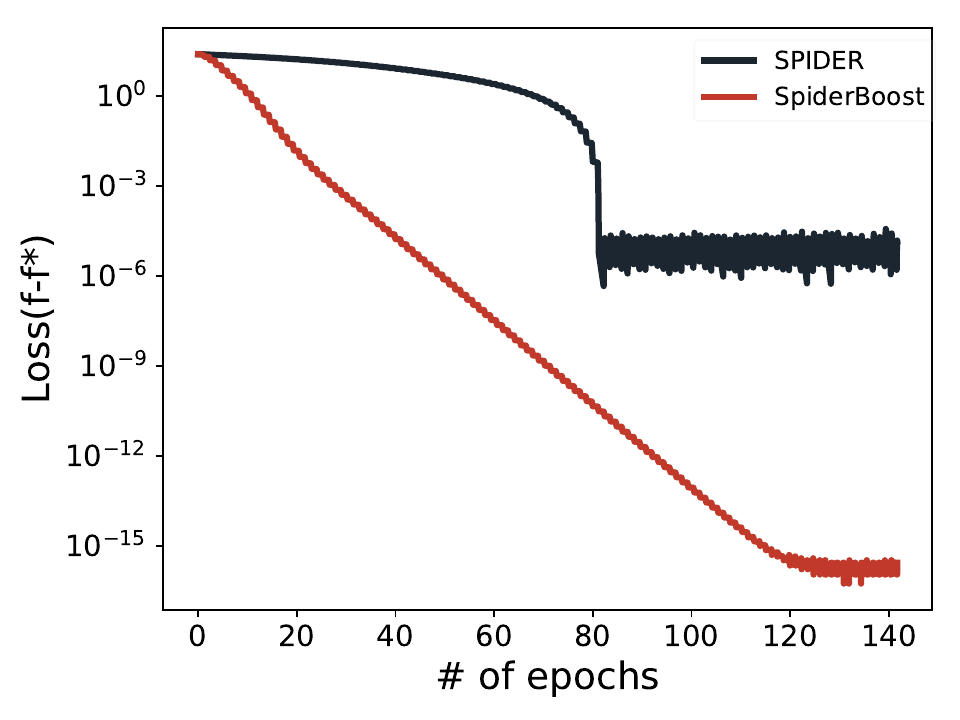}}  
		\caption{Dataset: w8a}
	\end{subfigure}  
	\begin{subfigure}{.24\textwidth}
		\centering
		{\includegraphics[width=1\linewidth]{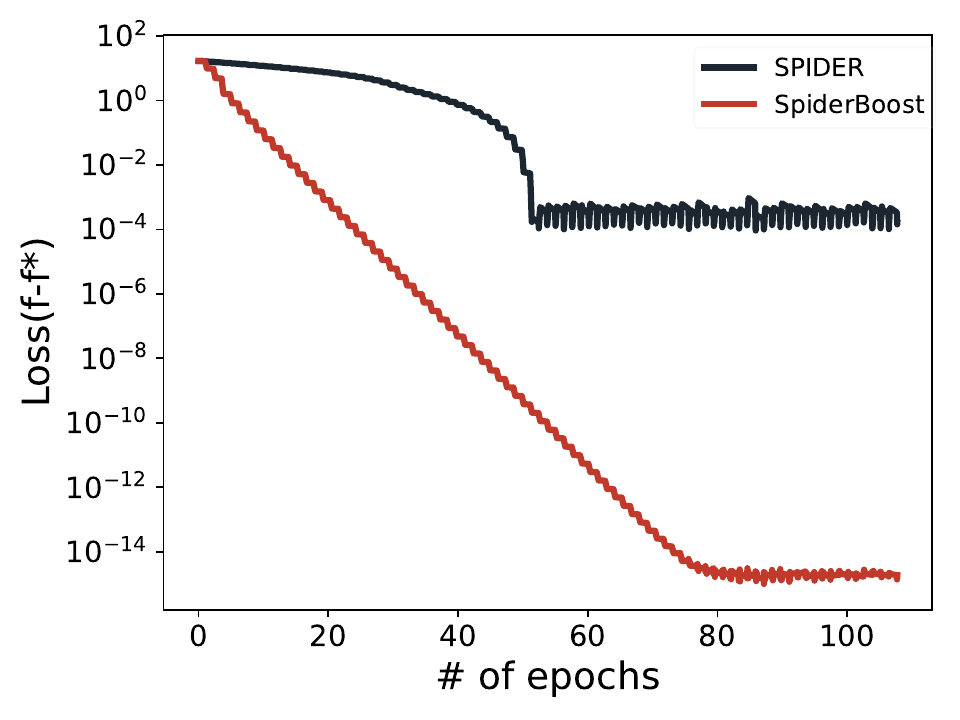}} 
		\caption{Dataset: a9a} 
	\end{subfigure} 
	\begin{subfigure} {.24\textwidth}
		\centering
		{\includegraphics[width=1\linewidth]{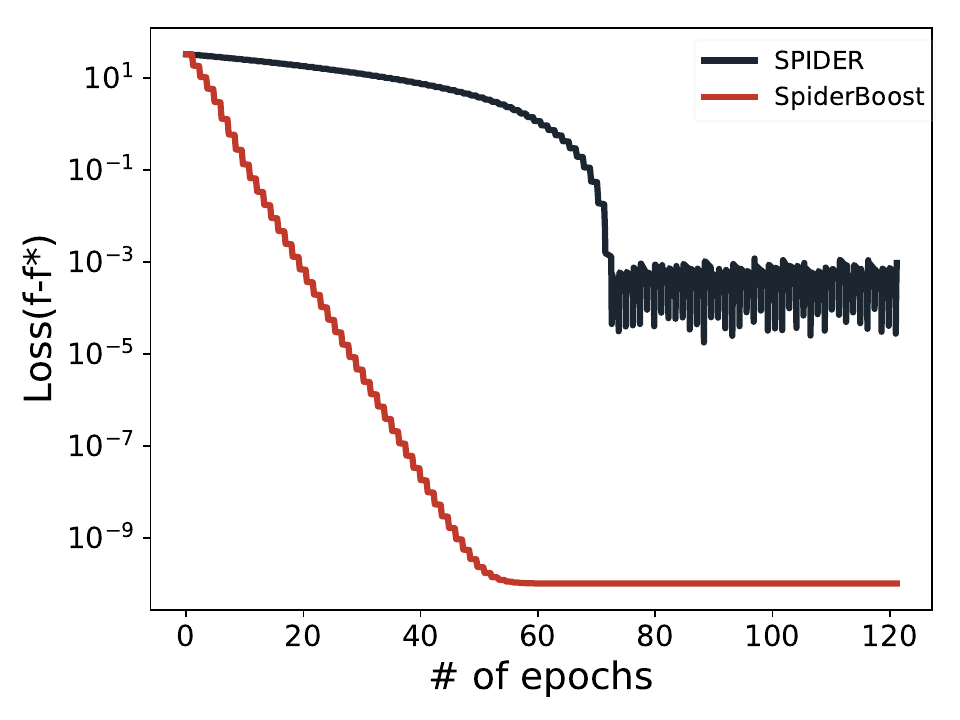}}  
		\caption{Dataset: w8a}
	\end{subfigure}  
	\caption{\small (a) and (b): Logistic regression problem with nonconvex regularizer.  (c) and (d): Robust linear regression problem with an $l_2$ regularizer.}   \label{Experment_1}
	\vspace{-3mm}
\end{figure}
In this subsection, we compare the performance of SPIDER and SpiderBoost for solving the logistic regression problem with a nonconvex regularizer and the nonconvex robust linear regression problem (See \Cref{app:exp} for the forms of the objective functions). For each problem, we apply two different datasets from the LIBSVM \cite{Chang_2011}: the a9a dataset ($n=32561, d=123$) and the w8a dataset ($n=49749, d=300$).  
For both algorithms, we use the same parameter setting except for the stepsize.
As specified in \citep{Fang2018} for SPIDER, we set $\eta = 0.01$ (determined by a prescribed accuracy to guarantee convergence). On the other hand, SpiderBoost allows to set $\eta = 0.05$. \Cref{Experment_1} shows the convergence of the function value gap of both algorithms versus the number of passes that are taken over the data. It can be seen that SpiderBoost enjoys a much faster convergence than that of SPIDER due to the allowance of a large stepsize. Furthermore, SPIDER oscillates around a point, which is the prescribed accuracy that determines the adopted stepsize $\eta = 0.01$. This implies that setting a larger stepsize for SPIDER would cause it to saturate and start to oscillate at a certain function value, which is undesired.

\vspace{-2mm}
\subsection{Comparison of SpiderBoost Type of Algorithms with Other Algorithms}

In this subsection, we compare the performance of our SpiderBoost (for smooth problems), Prox-SpiderBoost (for composite problems), and Prox-SpiderBoost-M  with other existing stochastic variance-reduced algorithms including SVRG in \cite{Johnson2013}, Katyusha$^{ns}$ in \cite{Allen-Zhu2017}, ASVRG in \cite{Shang2018}, RSAG in \cite{Ghadimi2016prox}. We note that all algorithms use certain momentum schemes except for SVRG, SpiderBoost, and Prox-SpiderBoost.
For all algorithms considered, we set their learning rates to be $0.05$. For each experiment, we initialize all the algorithms at the same point that is generated randomly from the normal distribution. Also, we choose a fixed mini-batch size $256$ and set the epoch length $q$ to be $2n/256$ such that all algorithms pass over the entire dataset twice in each epoch. 

We first apply these algorithms to solve two smooth nonconvex problems: logistic regression and robust linear regression problems, each with datasets of a9a and w8a, and report the experiment results in \Cref{Experment_2}. One can see from \Cref{Experment_2} that our Prox-SpiderBoost-M achieves the best performance and significantly outperforms other algorithms. Also, the performances of both Katyusha$^{ns}$ and ASVRG do not achieve much acceleration in such a nonconvex case, as these algorithms are originally developed to achieve acceleration for convex problems. This demonstrates that our design of Prox-SpiderBoost-M has a stable performance in nonconvex optimization as well as provable theoretical guarantee. We note that the curve of SpiderBoost overlaps with that of SVRG similarly to the results reported in other recent studies. 
\begin{figure}[ht]  
	\centering 
	\begin{subfigure}{0.245\linewidth}
		\includegraphics[width=\linewidth]{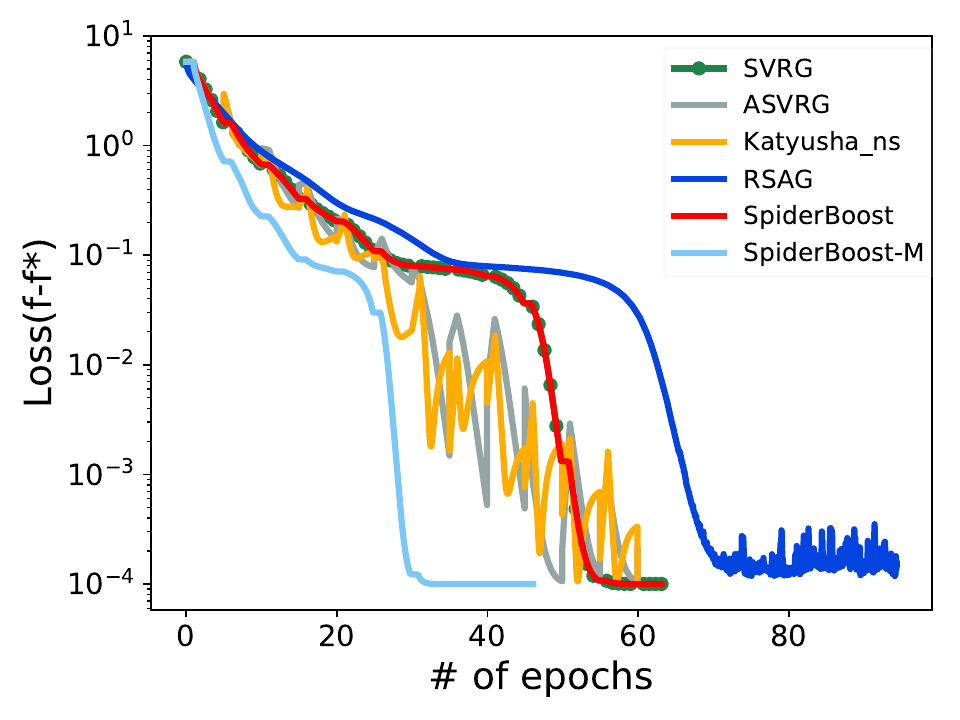}
		\caption{Dataset: a9a}
	\end{subfigure}
	\begin{subfigure}{0.245\linewidth}
		\includegraphics[width=\linewidth]{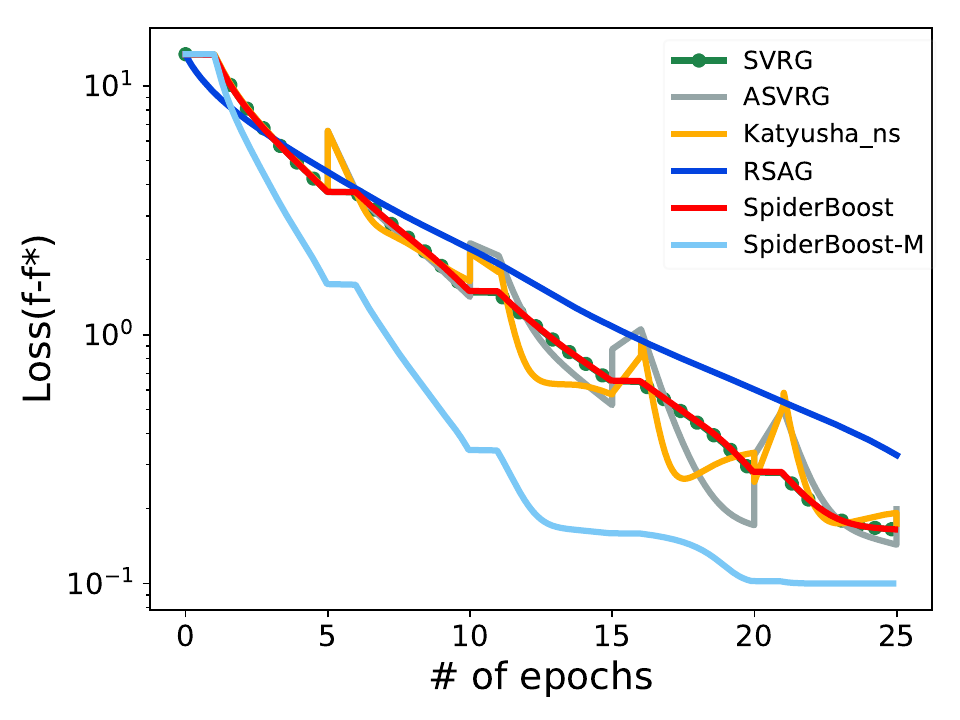}
		\caption{Dataset: w8a}
	\end{subfigure}%
	\begin{subfigure}{0.245\linewidth}
		\includegraphics[width=\linewidth]{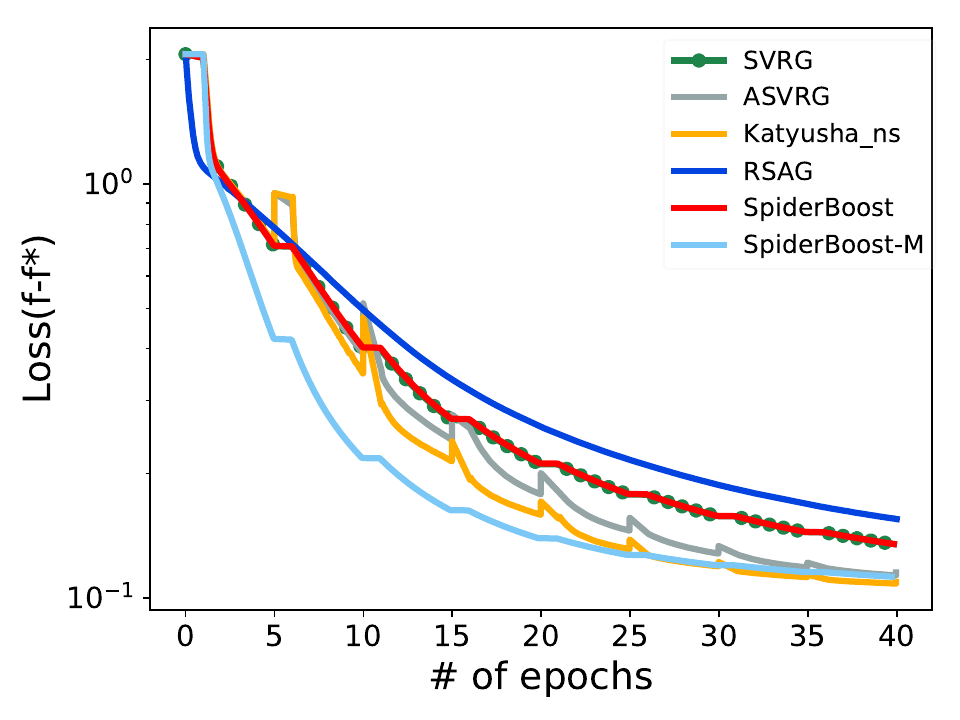}
		\caption{Dataset: a9a}
	\end{subfigure}
	\begin{subfigure}{0.245\linewidth}
		\includegraphics[width=\linewidth]{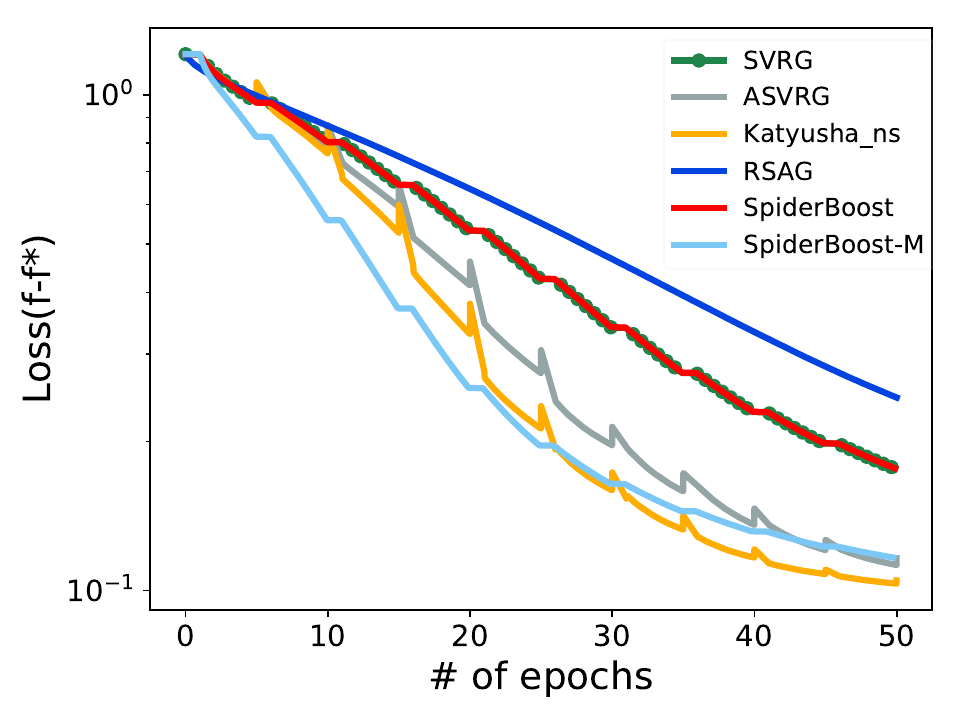}
		\caption{Dataset: w8a}
	\end{subfigure}%
	\caption{\small (a) and (b): Logistic regression with nonconvex regularizer, (c) and (d): Robust linear regression..}   \label{Experment_2}
\end{figure}


We further add an $\ell_1$ nonsmooth regularizer with weight coefficient $0.1$ to the objective functions of the above two optimization problems, and apply the corresponding proximal versions of these algorithms to solve the nonconvex composite optimization problems. All the results are presented in Figures \ref{Experment_3}. One can see that our Prox-SpiderBoost-M still significantly outperforms all the other algorithms in these nonsmooth and nonconvex scenarios. This demonstrates that our novel design of the coupled update for $\{y_k\}_k$ in the momentum scheme is efficient in the nonsmooth and nonconvex setting.
Also, it turns out that Katyusha$^{ns}$ and ASVRG are suffering from a slow convergence (their convergences occur at around 40 epochs). Together with the above experiments for smooth problems, this implies that their performance is not stable and may not be generally suitable for solving nonconvex problems.  
\begin{figure}[ht]  
	\centering
	\begin{subfigure}{0.245\linewidth}
		\includegraphics[width=\linewidth]{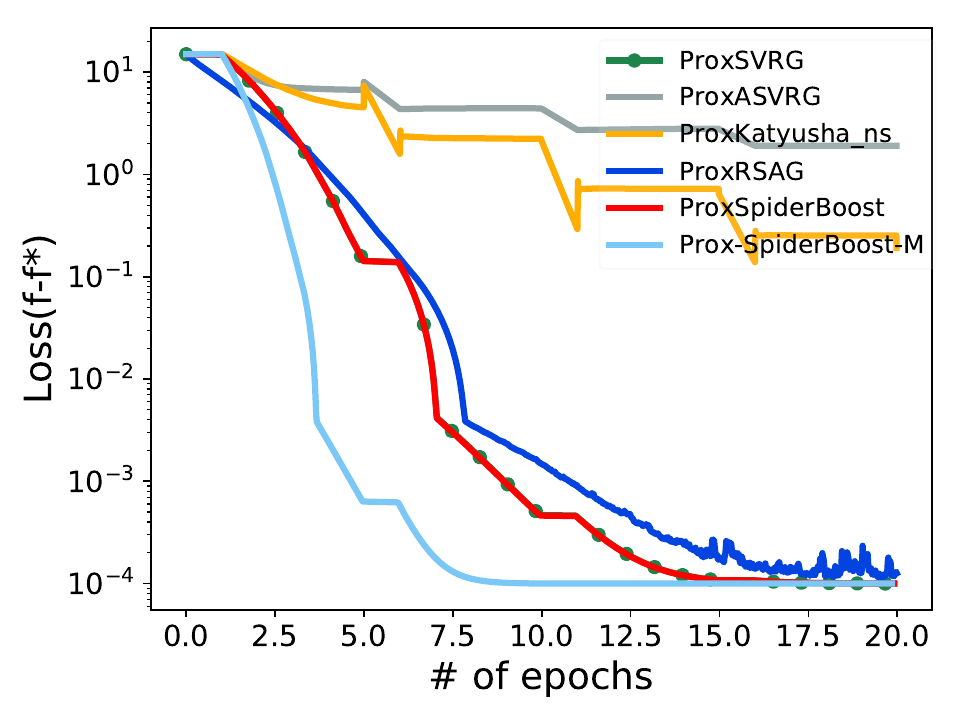}
		\caption{Dataset: a9a}
	\end{subfigure}
	\begin{subfigure}{0.245\linewidth}
		\includegraphics[width=\linewidth]{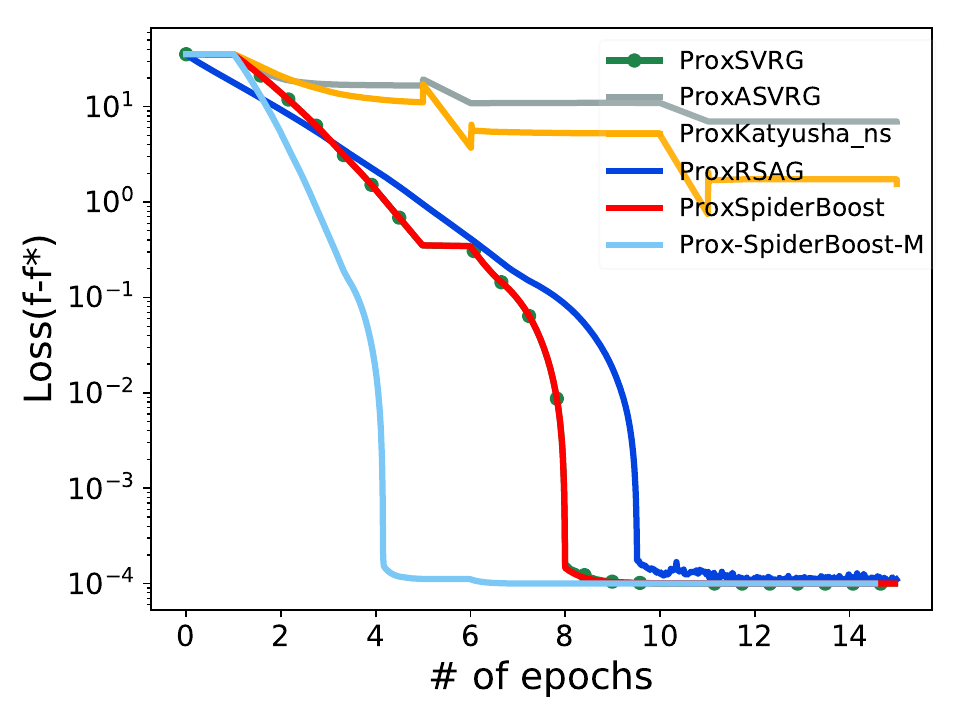}
		\caption{Dataset: w8a}
	\end{subfigure}%
		\begin{subfigure}{0.245\linewidth}
		\includegraphics[width=\linewidth]{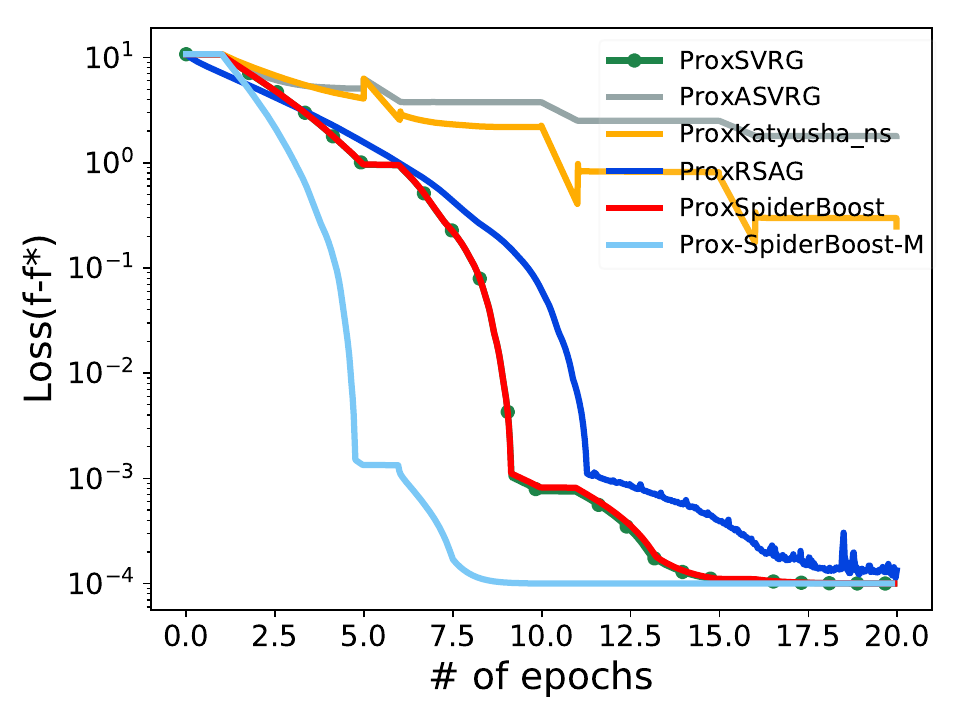}
		\caption{Dataset: a9a}
	\end{subfigure}
	\begin{subfigure}{0.245\linewidth}
		\includegraphics[width=\linewidth]{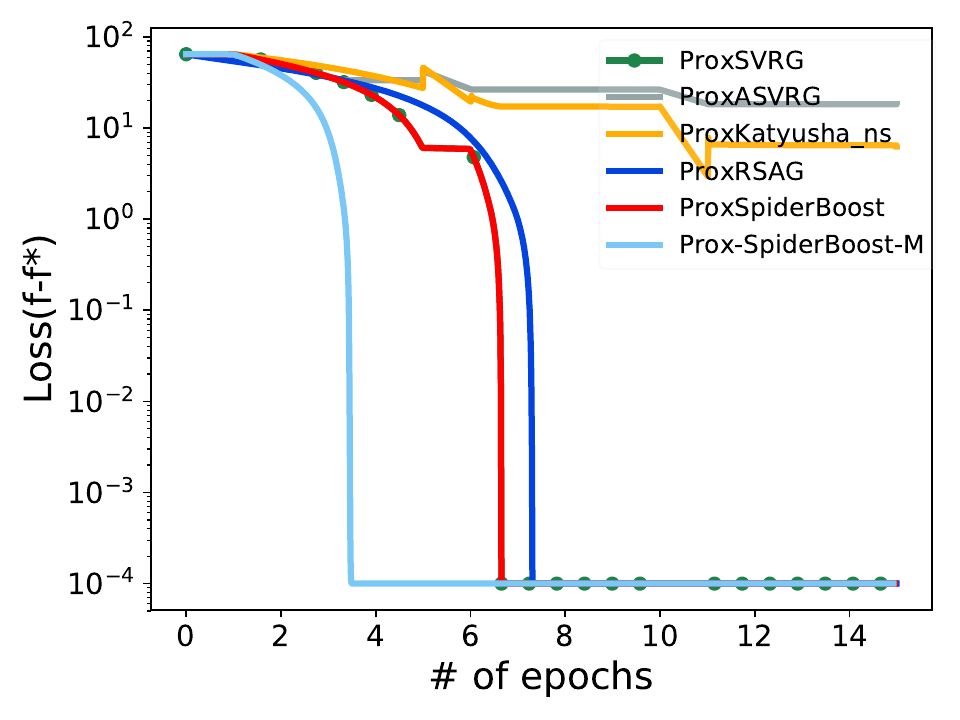}
		\caption{Dataset: w8a}
	\end{subfigure}%
	\caption{\small (a) and (b): Logistic regression with an $\ell_1$ nonsmooth regualarizer. (c) and (d): Robust linear regression with an $\ell_1$ nonsmooth regualarizer.}   \label{Experment_3} 
\end{figure}



\vspace{-3mm}
\section{Conclusion}
\vspace{-2mm}
In this paper, we proposed the SpiderBoost algorithm, which achieves the same near-optimal complexity performance as SPIDER, but allows a much larger stepsize and hence runs faster in practice than SPIDER. We then extend the proposed SpiderBoost to solve composite nonconvex optimization, and proposed a momentum scheme to further accelerate the algorithm. For all these algorithms, we develop new techniques to characterize the performance bounds, all of which achieve the best state-of-the-art. We anticipate that SpiderBoost has a great potential to be applied to various other large-scale optimization problems. 

\section*{Acknowledgments}
The work of Z. Wang, K. Ji, and Y. Liang was supported in part by the U.S. National Science Foundation under the grants CCF-1761506, CCF-1909291, and CCF-1900145.
\bibliography{ref}

\begin{thebibliography}{10}

\bibitem{Allen-Zhu2017}
Z.~Allen-Zhu.
\newblock Katyusha: The first direct acceleration of stochastic gradient
  methods.
\newblock {\em Journal of Machine Learning Research (JMLR)}, 18(1):8194--8244,
  Jan. 2017.

\bibitem{Allen-Zhu2018}
Z.~Allen-Zhu.
\newblock {{K}atyusha X: Simple momentum method for stochastic sum-of-nonconvex
  optimization}.
\newblock In {\em Proc. International Conference on Machine Learning (ICML)},
  volume~80, pages 179--185, 10--15 Jul 2018.

\bibitem{Allen_Zhu2017}
Z.~Allen-Zhu.
\newblock Natasha 2: Faster non-convex optimization than sgd.
\newblock In {\em Proc. Advances in Neural Information Processing Systems
  (NeurIPS)}, pages 2675--2686. 2018.

\bibitem{Allen_Zhu2016}
Z.~Allen-Zhu and E.~Hazan.
\newblock Variance reduction for faster non-convex optimization.
\newblock In {\em Proc International Conference on Machine Learning(ICML)},
  pages 699--707, 2016.

\bibitem{Bottou2018}
L.~Bottou, F.~E. Curtis, and J.~Nocedal.
\newblock Optimization methods for large-scale machine learning.
\newblock {\em SIAM Review}, 60(2):223--311, 2018.

\bibitem{Chang_2011}
C.~Chang and C.~Lin.
\newblock {LIBSVM}: A library for support vector machines.
\newblock {\em ACM Transactions on Intelligent Systems and Technology},
  2(3):1--27, 2011.

\bibitem{Defazio2014}
A.~Defazio, F.~Bach, and S.~Lacoste-Julien.
\newblock {SAGA: A fast incremental gradient method with support for
  non-strongly convex composite objectives}.
\newblock In {\em Proc. Advances in Neural Information Processing Systems
  (NeurIPS)}, pages 1646--1654. 2014.

\bibitem{Fang2018}
C.~Fang, C.~J. Li, Z.~Lin, and T.~Zhang.
\newblock Near-optimal non-convex optimization via stochastic path-integrated
  differential estimator.
\newblock In {\em Proc. Advances in Neural Information Processing Systems
  (NeurIPS)}. 2018.

\bibitem{Ghadimi2013}
S.~Ghadimi and G.~Lan.
\newblock Stochastic first- and zeroth-order methods for nonconvex stochastic
  programming.
\newblock {\em SIAM Journal on Optimization}, 23(4):2341--2368, 2013.

\bibitem{Ghadimi2016prox}
S.~Ghadimi and G.~Lan.
\newblock Accelerated gradient methods for nonconvex nonlinear and stochastic
  programming.
\newblock {\em Mathematical Programming}, 156(1):59--99, Mar 2016.

\bibitem{Ghadimi2016}
S.~Ghadimi, G.~Lan, and H.~Zhang.
\newblock Mini-batch stochastic approximation methods for nonconvex stochastic
  composite optimization.
\newblock {\em Mathematical Programming}, 155(1):267--305, Jan 2016.

\bibitem{Huang2019ICML}
F.~Huang, S.~Chen, and H.~Huang.
\newblock Faster stochastic alternating direction method of multipliers for
  nonconvex optimization.
\newblock In {\em Proc. International Conference on International Conference on
  Machine Learning (ICML)}. 2019.

\bibitem{huang2019nonconvex}
F.~Huang, S.~Gao, J.~Pei, and H.~Huang.
\newblock {Nonconvex zeroth-order stochastic ADMM methods with lower function
  query complexity}.
\newblock {\em arXiv:1907.13463}, 2019.

\bibitem{ji2019faster}
K.~Ji, Z.~Wang, Y.~Zhou, and Y.~Liang.
\newblock Faster stochastic algorithms via history-gradient aided batch size
  adaptation, 2019.

\bibitem{pmlr-v97-ji19a}
K.~Ji, Z.~Wang, Y.~Zhou, and Y.~Liang.
\newblock Improved zeroth-order variance reduced algorithms and analysis for
  nonconvex optimization.
\newblock In {\em Proceedings of the 36th International Conference on Machine
  Learning}, volume~97 of {\em Proceedings of Machine Learning Research}, pages
  3100--3109, 09--15 Jun 2019.

\bibitem{Johnson2013}
R.~Johnson and T.~Zhang.
\newblock Accelerating stochastic gradient descent using predictive variance
  reduction.
\newblock In {\em Proc. Advances in Neural Information Processing Systems
  (NeurIPS)}, pages 315--323. 2013.

\bibitem{Karimi2016}
H.~Karimi, J.~Nutini, and M.~Schmidt.
\newblock Linear convergence of gradient and proximal-gradient methods under
  the polyak-{\l}ojasiewicz condition.
\newblock In P.~Frasconi, N.~Landwehr, G.~Manco, and J.~Vreeken, editors, {\em
  Proc. Machine Learning and Knowledge Discovery in Databases}, pages 795--811,
  2016.

\bibitem{Lei2017}
L.~Lei, C.~Ju, J.~Chen, and M.~I. Jordan.
\newblock Non-convex finite-sum optimization via {SCSG} methods.
\newblock In {\em Proc. Advances in Neural Information Processing Systems
  (NeurIPS)}, pages 2348--2358. 2017.

\bibitem{Li2017}
Q.~Li, Y.~Zhou, Y.~Liang, and P.~K. Varshney.
\newblock Convergence analysis of proximal gradient with momentum for nonconvex
  optimization.
\newblock In {\em Proc. International Conference on Machine Learning (ICML)},
  volume~70, pages 2111--2119, 2017.

\bibitem{Li_2018}
X.~Li, S.~Ling, T.~Strohmer, and K.~Wei.
\newblock Rapid, robust, and reliable blind deconvolution via nonconvex
  optimization.
\newblock {\em Applied and Computational Harmonic Analysis}, 2018.

\bibitem{LiZhize2019}
Z.~{Li}.
\newblock {{SSRGD}: Simple stochastic recursive gradient descent for escaping
  saddle points}.
\newblock {\em arXiv:1904.09265}, Apr 2019.

\bibitem{Li2018}
Z.~Li and J.~Li.
\newblock A simple proximal stochastic gradient method for nonsmooth nonconvex
  optimization.
\newblock In {\em Proc. Advances in Neural Information Processing System
  (NeurIPS)}. 2018.

\bibitem{Nesterov2014}
Y.~Nesterov.
\newblock {\em Introductory lectures on convex optimization: A basic course}.
\newblock Springer Publishing Company, Incorporated, 2014.

\bibitem{Lam2017a}
L.~M. Nguyen, J.~Liu, K.~Scheinberg, and M.~Tak{\'a}{\v{c}}.
\newblock {SARAH}: A novel method for machine learning problems using
  stochastic recursive gradient.
\newblock In {\em Proc. International Conference on Machine Learning (ICML)},
  volume~70, pages 2613--2621, 2017.

\bibitem{Lam2017b}
L.~M. Nguyen, J.~Liu, K.~Scheinberg, and M.~Tak{\'a}{\v{c}}.
\newblock Stochastic recursive gradient algorithm for nonconvex optimization.
\newblock {\em ArXiv:1705.07261}, May 2017.

\bibitem{Nguyen2019withSARAH}
L.~M. {Nguyen}, M.~{van Dijk}, D.~T. {Phan}, P.~H. {Nguyen}, T.-W. {Weng}, and
  J.~R. {Kalagnanam}.
\newblock {Finite-sum smooth optimization with SARAH}.
\newblock {\em arXiv:1901.07648}, Jan 2019.

\bibitem{Nitanda2016}
A.~Nitanda.
\newblock Accelerated stochastic gradient descent for minimizing finite sums.
\newblock In {\em Proc. International Conference on Artificial Intelligence and
  Statistics (AISTATS)}, volume~51, pages 195--203, May 2016.

\bibitem{Pham2019}
N.~H. {Pham}, L.~M. {Nguyen}, D.~T. {Phan}, and Q.~{Tran-Dinh}.
\newblock {ProxSARAH: An efficient algorithmic framework for stochastic
  composite nonconvex optimization}.
\newblock {\em arXiv:1902.05679}, Feb 2019.

\bibitem{Reddi2016b}
S.~J. Reddi, A.~Hefny, S.~Sra, B.~Poczos, and A.~Smola.
\newblock Stochastic variance reduction for nonconvex optimization.
\newblock In {\em Proc. International Conference on Machine Learning (ICML)},
  pages 314--323, 2016.

\bibitem{Reddi2016}
S.~J. Reddi, S.~Sra, B.~Poczos, and A.~Smola.
\newblock Proximal stochastic methods for nonsmooth nonconvex finite-sum
  optimization.
\newblock In {\em Proc. Advances in Neural Information Processing Systems
  (NeurIPS)}, pages 1145--1153. 2016.

\bibitem{Nicolas2012}
N.~L. Roux, M.~Schmidt, and F.~R. Bach.
\newblock A stochastic gradient method with an exponential convergence rate for
  finite training sets.
\newblock In {\em Proc. Advances in Neural Information Processing Systems
  (NeurIPS)}, pages 2663--2671. 2012.

\bibitem{Shang2018}
F.~Shang, L.~Jiao, K.~Zhou, J.~Cheng, Y.~Ren, and Y.~Jin.
\newblock {ASVRG: Accelerated proximal SVRG}.
\newblock In {\em Proc. Asian Conference on Machine Learning}, volume~95, pages
  815--830, 2018.

\bibitem{Xu2019}
Y.~{Xu}, R.~{Jin}, and T.~{Yang}.
\newblock Stochastic proximal gradient methods for non-smooth non-convex
  regularized problems.
\newblock {\em arXiv:1902.07672}, Feb 2019.

\bibitem{zhang2019stochastic}
J.~Zhang and L.~Xiao.
\newblock A stochastic composite gradient method with incremental variance
  reduction.
\newblock {\em arXiv:1906.10186}, 2019.

\bibitem{Zhang2018}
J.~{Zhang}, H.~{Zhang}, and S.~{Sra}.
\newblock {R-SPIDER: A fast {R}iemannian stochastic optimization algorithm with
  curvature independent rate}.
\newblock {\em arXiv:811.04194}, 2018.

\bibitem{Zhou2018}
D.~Zhou, P.~Xu, and Q.~Gu.
\newblock Stochastic nested variance reduced gradient descent for nonconvex
  optimization.
\newblock In {\em Proc. Advances in Neural Information Processing System
  (NeurIPS)}. 2018.

\bibitem{ZhouHybird2018}
P.~Zhou, X.~Yuan, and J.~Feng.
\newblock New insight into hybrid stochastic gradient descent: Beyond
  with-replacement sampling and convexity.
\newblock In {\em Proc. Advances in Neural Information Processing Systems
  (NeurIPS)}, pages 1242--1251. 2018.

\bibitem{ZhouPan2018}
P.~{Zhou}, X.~{Yuan}, and J.~{Feng}.
\newblock {Faster first-order methods for stochastic non-convex optimization on
  Riemannian manifolds}.
\newblock In {\em Proc. International Conference on Artificial Intelligence and
  Statistics (AISTATS)}, 2019.

\bibitem{Zhou2017}
Y.~Zhou and Y.~Liang.
\newblock Characterization of gradient dominance and regularity conditions for
  neural networks.
\newblock {\em ArXiv:1710.06910v2}, Oct 2017.

\bibitem{Zhou2016gd}
Y.~Zhou, H.~Zhang, and Y.~Liang.
\newblock Geometrical properties and accelerated gradient solvers of non-convex
  phase retrieval.
\newblock {\em Proc. Annual Allerton Conference on Communication, Control, and
  Computing (Allerton)}, pages 331--335, 2016.

\end{thebibliography}
\bibliographystyle{abbrv}

\newpage
\onecolumn

\appendix
\noindent {\Large \textbf{Supplementary Materials}}
\section{Comparison of SFO Complexity for Smooth Nonconvex Optimization}
\renewcommand{\arraystretch}{1.3} 
\definecolor{LightCyan}{rgb}{0.88,1,1}
\begin{table*}[h] 
	\small 
	\centering 
	\caption{Comparison of SFO complexity for smooth nonconvex optimization} 
	\label{smooth_comparison}
	\vspace{2mm}
	
	\scalebox{0.9}{
	\begin{threeparttable}
			\begin{tabular}{clllll} \toprule
				\multirow{2}{*}{Algorithms}& &\multirow{2}{*}{Stepsize $\eta$} &\multicolumn{1}{c}{Finite-sum}   &\phantom{a}    &\multicolumn{1}{c}{Finite-sum/Online}  \\  
				&& &\multicolumn{1}{c}{SFO}   & &\multicolumn{1}{c}{SFO}  \\   \midrule
				GD &\citep{Nesterov2014}  &$\mathcal{O}( L^{-1} )$ &  $\mathcal{O}(n \epsilon^{-2} )$  &  &N/A\footnotemark \\  \midrule
				SGD &\citep{Ghadimi2016} &$\mathcal{O}( L^{-1} )$  &  N/A &  &$\mathcal{O}(\epsilon^{-4}  )$    \\  \midrule 
				
				\multirow{2}{*}{SVRG}  &\citep{Reddi2016b}  &\multirow{2}{*}{$\mathcal{O}( L^{-1}n^{-2/3} )$} & \multirow{2}{*}{$\mathcal{O}(n+ n^{2/3}\epsilon^{-2})$} &   &   \multirow{2}{*}{N/A} \\ 
				&\citep{Allen_Zhu2016}  \\ \midrule
				
				SCSG & \citep{Lei2017}  &$\mathcal{O}( L^{-1}(n^{-2/3} \wedge \epsilon^{4/3} ))$ & $\mathcal{O}(n+n^{2/3}\epsilon^{-2})$&   &   $\mathcal{O}(\epsilon^{-2} + \epsilon^{-10/3}  )$  \\  \midrule
				
				SARAH & \citep{Lam2017b,Lam2017a}  &$\mathcal{O}(   (L\sqrt{q})^{-1})$\footnotemark &N/A & &  $\mathcal{O}( \epsilon^{-4})$    \\  \midrule
				
				SNVRG &\citep{Zhou2018}    &$\mathcal{O}(   L^{-1} )$ & $ {\mathcal{O}}( (n+ n^{1/2}\epsilon^{-2}) \log(n))$&   &   $ {\mathcal{O}}(  (\epsilon^{-2}+\epsilon^{-3 }) \log(\epsilon^{-1}) )$  \\ 
				\midrule
				
				SPIDER &\citep{Fang2018}  &$\mathcal{O}( {{  \color{red}{  \epsilon}L^{-1}} } )$\; \footnotemark & $\mathcal{O}(n+ n^{1/2}\epsilon^{-2})$&   &   $\mathcal{O}( \epsilon^{-2} + \epsilon^{-3 }  )$  \\  \midrule \belowrulesepcolor{LightCyan}
				\rowcolor{LightCyan}
				SpiderBoost &(This Work) &$\mathcal{O}( \color{red} L^{-1})$ & $\mathcal{O}(n + n^{1/2}\epsilon^{-2})$&   &   $\mathcal{O}(  \epsilon^{-2} + \epsilon^{-3 }  )$     \\   \aboverulesepcolor{LightCyan}  \bottomrule
		\end{tabular} 
		\vspace{2mm}
		\begin{tablenotes}\small
			\item[2]{For deterministic algorithms, the online setting does not exist.} 
			\item[3]  {The stepsize $\eta = \mathcal{O} (1/(L\sqrt{q}))$ is chosen in \citep{Lam2017a} to guarantee the convergence of SARAH.}
			\item[4] {SPIDER uses the normalized gradient descent, which can also be viewed as the gradient descent with the stepszie $O(\epsilon L^{-1} /\| v_k\|)$.} 
		\end{tablenotes} 
	\end{threeparttable} 
}
\end{table*}

\section{Prox-SpiderBoost for Constrained Optimization under Non-Euclidean Geometry} \label{general_smooth_rlt}
Prox-SpiderBoost proposed in \Cref{sec_proximal} adopts the proximal mapping that solves an unconstrained subproblem under the $\ell_2$ Euclidean distance. Such a mapping can be further generalized to solve constrained composite optimization under a non-Euclidean geometry.  

To elaborate, consider solving the composite optimization problem (Q) subject to a convex constraint set $\mathcal{X}$. We introduce the following Bregman distance $V$ associated with a kernel function $\omega: \mathcal{X} \rightarrow \mathbb{R}$ defined as: for all $x,y \in \mathcal{X}$,
\begin{align}
	V(x,y) = \omega(x) - \omega(y) - \inner{\nabla \omega(y)}{x-y}. \label{p_strongly_convex}
\end{align}
Here, the function $\omega$ is smooth and $\alpha$-strongly convex with respect to a certain generic norm. The specific choice of the kernel function $\omega$ should be compatible to the underlying geometry of the constraint set. For example, for the unconstrained case, one can choose $\omega(x) = \frac{1}{2} \|x \|^2$ so that $V(x,y) = \frac{1}{2} \|x-y\|^2$, which is 1-strongly convex with regard to the $\ell_2$-norm, whereas for the simplex constraint set, one can choose $\omega(x) = \sum_{i=1}^d (x_i \log x_i - x_i)$ that yields the KL relative entropy distance $V(x,y) = \sum_{i=1}^d (x_i \log \frac{x_i}{y_i} + y_i - x_i)$, which is $1$-strongly convex with regard to the $\ell_1$-norm. 
  

Based on the Bregman distance, the proximal gradient step in \Cref{alg:Proximal_SPIDER+} can be generalized to the following update rule for solving the constrained composite optimization.
\begin{align}
T_{\eta h}(x,v) = \arg \min_{u\in \mathcal{X}} \Big\{h(u) + \inner{v}{u} + \frac{1}{\eta}V(u,x) \Big\}. \label{p_d_1}
\end{align}
Moreover, the characterization of critical points in \Cref{fact: grad} remains valid by defining the generalized gradient as $G_{\eta}(x) = \frac{1}{\eta} (x - T_{\eta h}(x,\nabla f(x)))$. Then, we obtain the following oracle complexity result of Prox-SpiderBoost under the Bregman distance (replace the  proximal step in \Cref{alg:Proximal_SPIDER+}  by $x_{k+1} = T_{\eta h}(x_k,v_k)$ ) for solving constrained composite optimization.
\begin{theorem}\label{p_finite_sum_thm_nonEuclidean}
	Let \Cref{assum: f} hold and consider the problem (Q). Apply Prox-SpiderBoost with a proper Bregman distance $V$ that is $\alpha$-strongly convex, where $\alpha > 7/8$. Choose the parameters $q =|S| = \sqrt{n}$ and  $\eta = \frac{1}{2L}$. Then, the algorithm outputs a point $x_\xi$ satisfying $\mathbb{E}\|G_\eta(x_\xi)\|  \le \epsilon$ provided that the total number $K$ of iterations satisfies
	\begin{align*}
	K = \frac{4L(\Psi(x_0) -  \Psi^*)}{\epsilon^2 } \left({\alpha - \frac{7}{8}} \right)^{-1}\left(1 + \frac{1}{4\alpha}\right) .
	\end{align*}
	Moreover, the total SFO complexity is $\mathcal{O}( {\sqrt{n}}{\epsilon^{-2}} +n)$, and the PO complexity is $\mathcal{O}(\epsilon^{-2})$.
\end{theorem}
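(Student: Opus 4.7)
The plan is to adapt the SPIDER-style potential argument to the Bregman proximal setting in three stages. First I would establish a variance bound on the estimator: within an inner loop indexed $k_0 \le k < k_0 + q$ with $v_{k_0} = \nabla f(x_{k_0})$, the difference $v_k - v_{k-1} - (\nabla f(x_k) - \nabla f(x_{k-1}))$ is a conditional martingale difference by construction of the SPIDER estimator, and combining this with the $L$-Lipschitz continuity of each $\nabla f_i$ and mutual independence of mini-batches across iterations yields
\[
\mathbb{E}\|v_k - \nabla f(x_k)\|^2 \le \frac{L^2}{|S|}\sum_{j=k_0}^{k-1}\mathbb{E}\|x_{j+1} - x_j\|^2.
\]
In parallel I would derive a one-step descent on the composite objective: $L$-smoothness gives the quadratic upper bound on $f(x_{k+1}) - f(x_k)$, while the first-order optimality of the Bregman proximal update \eqref{p_d_1}, combined with the three-point identity $\langle \nabla\omega(x_{k+1}) - \nabla\omega(x_k), u - x_{k+1}\rangle = V(u, x_k) - V(u, x_{k+1}) - V(x_{k+1}, x_k)$ evaluated at $u = x_k$ together with the $\alpha$-strong convexity of $\omega$, yields $h(x_{k+1}) - h(x_k) \le \langle v_k, x_k - x_{k+1}\rangle - \frac{\alpha}{\eta}\|x_{k+1} - x_k\|^2$. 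Adding the two and bounding the cross term $\langle \nabla f(x_k) - v_k, x_{k+1} - x_k\rangle$ by Young's inequality with parameter $\rho = L/2$, the per-iteration descent becomes
\[
\Psi(x_{k+1}) - \Psi(x_k) \le \tfrac{1}{L}\|v_k - \nabla f(x_k)\|^2 + \left(\tfrac{3L}{4} - 2L\alpha\right)\|x_{k+1}-x_k\|^2.
\]

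Next I would sum this inequality over one inner loop and substitute the variance bound. Because $|S| = q$, summing the variance inequality over $k_0 \le k < k_0 + q$ yields $\sum_k \mathbb{E}\|v_k - \nabla f(x_k)\|^2 \le L^2 \sum_j \mathbb{E}\|x_{j+1} - x_j\|^2$. Plugging into the summed descent produces
\[
\mathbb{E}[\Psi(x_{k_0+q}) - \Psi(x_{k_0})] \le -2L\left(\alpha - \tfrac{7}{8}\right)\sum_{j=k_0}^{k_0+q-1}\mathbb{E}\|x_{j+1}-x_j\|^2.
\]
The threshold $\alpha > 7/8$ is exactly what the Young's-constant choice $\rho = L/2$ delivers; any looser cross-term bound gives a worse threshold. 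Telescoping across all outer loops and using $\mathbb{E}[\Psi(x_K)] \ge \Psi^*$ then gives $\sum_{k=0}^{K-1}\mathbb{E}\|x_{k+1}-x_k\|^2 \le \frac{\Psi(x_0)-\Psi^*}{2L(\alpha - 7/8)}$.

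Finally, since the generalized gradient is $G_\eta(x_{k+1}) = (x_k - x_{k+1})/\eta$ and $\eta = 1/(2L)$, we have $\|G_\eta(x_{k+1})\|^2 = 4L^2\|x_{k+1} - x_k\|^2$. Averaging over $\xi$ chosen uniformly from $\{0,\ldots,K-1\}$ and accounting for the gap between the algorithmic direction $(x_k - x_{k+1})/\eta$ and the true proximal-gradient direction (built from $\nabla f(x_k)$ via an auxiliary prox step, whose distance to $x_{k+1}$ is controlled by $\frac{\eta}{\alpha}\|\nabla f(x_k) - v_k\|$ through the $\alpha$-strong convexity of $V$), followed by Young's inequality with weight $1/(4\alpha)$ to combine the two contributions, introduces the overhead factor $1 + \frac{1}{4\alpha}$. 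Combined with Jensen's inequality $\mathbb{E}\|G_\eta(x_\xi)\| \le \sqrt{\mathbb{E}\|G_\eta(x_\xi)\|^2}$, this delivers the stated lower bound on $K$. The SFO complexity follows from counting $n$ samples for the full gradient plus $2|S|$ per inner iteration over $K/q$ outer loops, giving $\mathcal{O}(n + \sqrt{n}\epsilon^{-2})$; the PO complexity is simply $\mathcal{O}(K) = \mathcal{O}(\epsilon^{-2})$. The main obstacle is calibrating Young's constants so that the per-loop variance inflation (factor $q$) cleanly cancels the $1/|S|$ in the variance bound while leaving enough of the strong-convexity absorption to dominate the leftover quadratic term, which is what pins down both the threshold $\alpha > 7/8$ and the factor $1 + 1/(4\alpha)$ in the final complexity.
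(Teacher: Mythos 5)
Your proposal follows essentially the same route as the paper: the telescoped SPIDER variance bound, a per-iteration descent combining $L$-smoothness with the $\alpha$-strong-convexity inequality for the Bregman proximal step, inner-loop summation exploiting $q=|S|$ so the accumulated variance yields the threshold $\alpha>7/8$, and the $\frac{1}{\alpha}$-Lipschitz dependence of the prox-gradient map on its gradient argument to pass from the algorithmic direction $(x_k-x_{k+1})/\eta$ to $G_\eta(x_\xi)$, exactly as in \Cref{p_main_theorem}. Two cosmetic quibbles that do not affect correctness: your aside that $\rho=L/2$ is the unique Young's choice delivering $\alpha>7/8$ is inaccurate (the paper's choice $\rho=2L$ gives the same threshold, and $\rho=L$ would in fact improve it to $3/4$), and the sketched final Young's step actually produces the overhead $1+\frac{1}{4\alpha^2}$ (matching the paper's \Cref{p_main_theorem}) rather than the $1+\frac{1}{4\alpha}$ written in the theorem statement, which appears to be the paper's own typo.
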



\section{Prox-SpiderBoost under Polyak-{\L}ojasiewicz Condition}\label{sec:sc}

Despite the nonconvexity geometry,   many machine learning problems have been shown to satisfy the so-called Polyak-{\L}ojasiewicz condition such as phase retrieval \citep{Zhou2016gd}, blind deconvolution \citep{Li_2018} and neural networks \citep{Zhou2017}, etc. This motivates us to explore the theoretical performance of the Prox-SpiderBoost for solving the composite optimization problem (Q) under the generalized Polyak-{\L}ojasiewicz geometry we define below, where the function can still be nonconvex. 
 
\begin{definition}\label{def: sc}
Let $x^*$ be a minimizer of function $\Psi = f+h$. Then, $\Psi$ is said to satisfy the Polyak-{\L}ojasiewicz condition with parameter $\tau$ if for all $x\in \mathbb{R}^d$ and $\eta>0$ one has
\begin{align*}
	\Psi(x) - \Psi(x^*) \le \tau\|G_\eta(x)\|^2,
\end{align*} 
where $G_\eta(x)$ is the generalized gradient defined in \Cref{fact: grad}.
\end{definition}
\Cref{def: sc} generalizes the traditional Polyak-{\L}ojasiewicz condition for single smooth objective functions to composite objective functions. In particular, such a condition allows the objective function to be nonsmooth and nonconvex, and it requires the growth of the function value  to be controlled by the gradient norm.

\begin{algorithm}
	\caption{Prox-SpiderBoost-PL}
	\label{alg:Proximal_SPIDER+_sc}
	
	{\bf Input:} $x_0\in \mathbb{R}^d, q \in \mathbb{N}, \eta<\frac{1}{16L}$.
	
	{\bf \text{For} ~$k=0, 1, 2, \ldots K-1$}
	
	\quad {\bf \text{If}~$\textrm{mod}(k,q) = 0$:} 
	
	\qquad Set $x_k = x_\xi$, where $\xi$ is selected from 
	
	\qquad $\{k-q+1,\ldots,k-2\}$ uniformly at random.
	
	\qquad Compute  $v_{k} = \nabla f (x_k),$
	
	\quad {\bf{\text{Else}:} }
	
	\qquad Draw $|S|$ samples with replacement.
	
	\qquad Compute $v_k$ according to \cref{spider}.
	
	\quad $x_{k+1} = \mathrm{prox}_{\eta h}(x_k - \eta v_k).$

	\textbf{Output:} $x_\xi$   from $\{x_0, \cdot, x_{K-1}\}$ uniformly at random.
\end{algorithm}

In order to solve the composite optimization problems under the generalized Polyak-{\L}ojasiewicz condition, we propose a variant of Prox-SpiderBoost, which we refer to as Prox-SpiderBoost-PL, described in \Cref{alg:Proximal_SPIDER+_sc}. 
We note that Prox-SpiderBoost-PL  
   can also be viewed as a generalization of SARAH \cite{Lam2017b} to a proximal algorithm with further differences lying in a much larger stepsize than that chosen by SARAH and random sampling with replacement for inner loop iterations, as opposed to sampling without replacement taken by SARAH.

Next, we present the convergence rate characterization of \Cref{alg:Proximal_SPIDER+_sc} for solving composite optimization problems under the generalized Polyak-{\L}ojasiewicz condition. 
\begin{theorem}\label{convex_thm}
	Let Assumprion \ref{assum: f} hold and apply Prox-SpiderBoost-PL in \Cref{alg:Proximal_SPIDER+_sc} to solve the problem (Q) with $\mathcal{X} = \mathbb{R}^d$.
	Assume the objective function satisfies the Polyak-{\L}ojasiewicz condition with parameter $\tau$ and set $q=|S|=\Theta(L\tau), \eta   = \frac{1}{8L}$. Then, the generated variable sequence satisfies, for all $t=1,2,...$
	\begin{align*}
	\mathbb{E} \|G_{\eta}(x_{tq})\|^2 \le \frac{64\tau L}{q-2} \mathbb{E} \|G_{\eta}(x_{(t-1)q})\|^2.
	\end{align*}
	Consequently, the oracle complexity of \Cref{alg:Proximal_SPIDER+_sc} for finding a point $x_\xi$ that satisfies $\mathbb{E} \|G_{\eta}(x_{\xi})\|  \le \epsilon$ is in the order $\mathcal{O}((n+L^2\tau^2)\log \frac{1}{\epsilon})$.
\end{theorem}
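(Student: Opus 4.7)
The plan is to establish a per-epoch contraction by combining a proximal descent lemma with the SPIDER variance bound, and then pass to geometric decay by invoking the $\tau$-gradient dominance property. Denote an epoch by indices $k=k_0,k_0+1,\dots,k_0+q-1$ where $k_0=(t-1)q$ and $v_{k_0}=\nabla f(x_{k_0})$ is recomputed as a full gradient. Write $\widetilde G_\eta(x_k):=\frac{1}{\eta}(x_k-x_{k+1})$ for the ``estimated'' generalized gradient produced by the proximal step with $v_k$, so that $\|x_{k+1}-x_k\|^2=\eta^2\|\widetilde G_\eta(x_k)\|^2$; by nonexpansiveness of $\mathrm{prox}_{\eta h}$ we have $\|G_\eta(x_k)-\widetilde G_\eta(x_k)\|\le \|\nabla f(x_k)-v_k\|$, so $\|G_\eta(x_k)\|^2\le 2\|\widetilde G_\eta(x_k)\|^2+2\|\nabla f(x_k)-v_k\|^2$.

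First, I would derive a per-iteration descent inequality. Using $L$-smoothness of $f$ to bound $f(x_{k+1})$ and the optimality of the proximal step (which gives $h(x_{k+1})+\langle v_k,x_{k+1}-x_k\rangle+\frac{1}{2\eta}\|x_{k+1}-x_k\|^2\le h(x_k)$), and combining with Young's inequality on the cross term $\langle \nabla f(x_k)-v_k,\,x_{k+1}-x_k\rangle$, I would obtain an inequality of the form
\begin{equation*}
\Psi(x_{k+1}) \le \Psi(x_k) - c_1\eta\,\|\widetilde G_\eta(x_k)\|^2 + c_2\eta\,\|\nabla f(x_k)-v_k\|^2,
\end{equation*}
where $c_1,c_2$ are absolute constants determined by the Young's inequality split and the stepsize condition $\eta\le 1/(16L)$. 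Next, I would invoke the standard SPIDER/SARAH variance recursion: taking conditional expectations,
\begin{equation*}
\mathbb{E}\|\nabla f(x_k)-v_k\|^2 \le \frac{L^2}{|S|}\sum_{j=k_0+1}^{k} \mathbb{E}\|x_j-x_{j-1}\|^2 = \frac{L^2\eta^2}{|S|}\sum_{j=k_0+1}^{k}\mathbb{E}\|\widetilde G_\eta(x_{j-1})\|^2.
\end{equation*}

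I then sum the per-iteration inequality over $k=k_0,\dots,k_0+q-1$. The telescoping on $\Psi$ leaves $\Psi(x_{k_0})-\mathbb{E}\Psi(x_{k_0+q})$. The accumulated variance term, by exchanging the order of summation in the SPIDER bound, contributes at most $\frac{c_2 L^2 \eta^3 q}{|S|}\sum_j \mathbb{E}\|\widetilde G_\eta(x_{j-1})\|^2$. The critical algebraic step is to set $q=|S|=\Theta(L\tau)$ and $\eta=1/(8L)$ so that $\frac{c_2 L^2\eta^3 q}{|S|}=c_2L^2\eta^3 \le \frac{c_1\eta}{2}$, which allows the variance term to be absorbed into (half of) the descent term. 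After absorption,
\begin{equation*}
\sum_{k=k_0}^{k_0+q-1}\mathbb{E}\|\widetilde G_\eta(x_k)\|^2 \le \tfrac{C}{\eta}\bigl(\Psi(x_{k_0})-\mathbb{E}\Psi(x_{k_0+q})\bigr) \le \tfrac{C}{\eta}\bigl(\Psi(x_{k_0})-\Psi^*\bigr)
\end{equation*}
for a universal constant $C$. Converting $\widetilde G_\eta$ back to $G_\eta$ via the earlier inequality (and re-absorbing the variance term again), and then applying $\tau$-gradient dominance $\Psi(x_{k_0})-\Psi^*\le\tau\|G_\eta(x_{k_0})\|^2$, yields $\sum_{k=k_0+1}^{k_0+q-2}\mathbb{E}\|G_\eta(x_k)\|^2\le 64\tau L\,\|G_\eta(x_{k_0})\|^2$ after carefully tracking constants.

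Finally, since the next epoch's base point $x_{tq}$ is chosen uniformly at random from $\{x_{k_0+1},\dots,x_{k_0+q-2}\}$, we have $\mathbb{E}\|G_\eta(x_{tq})\|^2 = \frac{1}{q-2}\sum_{k=k_0+1}^{k_0+q-2}\mathbb{E}\|G_\eta(x_k)\|^2\le \frac{64\tau L}{q-2}\mathbb{E}\|G_\eta(x_{(t-1)q})\|^2$, giving the stated recursion. Choosing $q=\Theta(L\tau)$ with a sufficiently large constant makes the per-epoch factor bounded by, say, $1/2$, so $T=\mathcal{O}(\log(1/\epsilon))$ epochs suffice. Each epoch costs one full gradient $(n$ SFO calls$)$ plus $(q-1)\cdot|S|=\mathcal{O}(L^2\tau^2)$ SFO calls for the inner iterations, producing the claimed total complexity $\mathcal{O}((n+L^2\tau^2)\log(1/\epsilon))$. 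The main obstacle I anticipate is the constant bookkeeping in the variance absorption step: the SPIDER recursion creates a double sum whose size is controlled only when the choice $q=|S|$ interacts correctly with the stepsize, and getting the precise factor $64$ (rather than some larger constant) will require splitting Young's inequalities with deliberately chosen weights and carefully matching the coefficients of $\|\widetilde G_\eta\|^2$ and $\|\nabla f - v_k\|^2$ in every intermediate inequality.
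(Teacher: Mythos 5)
Your proposal follows essentially the same route as the paper's proof: a per-iteration proximal descent inequality, the SPIDER variance recursion absorbed into the descent term via the choice $q=|S|$ and $\eta=1/(8L)$, the $\tau$-gradient dominance condition applied to the epoch's starting point, and the uniform random restart over $\{k_0+1,\dots,k_0+q-2\}$ to convert the epoch sum into $\frac{1}{q-2}\mathbb{E}\|G_\eta(x_{tq})\|^2$. The only cosmetic difference is that the paper imports a descent lemma from \cite{Li2018} that already contains the true gradient-mapping term $\|\overline{x_{k+1}}-x_k\|^2$, whereas you work with the estimated mapping $\widetilde G_\eta$ and convert to $G_\eta$ at the end via nonexpansiveness of the proximal operator with a second absorption of the variance — both are valid and yield the same order, differing only in the explicit constant.
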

\Cref{convex_thm} shows that Prox-SpiderBoost-PL in \Cref{alg:Proximal_SPIDER+_sc} converges linearly to a stationary point for solving composite optimization problems under the generalized Polyak-{\L}ojasiewicz condition. 
We compare the SFO complexity in \Cref{convex_thm} with those of previous proposed stochastic proximal algorithms in \Cref{comparison_nonsmooth_gd}. Our result outperforms the state-of-art result in the regime of $\tau < n^{2/3}$, which is desirable for solving large data problem (i.e., $n$ is   large). Moreover, we note that both the results of ProxSVRG and ProxSVRG+ requires the condition number to satisfy $L\tau \geqslant   \sqrt{n}$ , 
 whereas our result of Prox-SpiderBoost-PL does not require the aforementioned condition, and has the most relaxed dependency on $n, L$ and $\tau$  demonstrating the superior performance of Prox-SpiderBoost-PL for optimizing functions under Polyak-{\L}ojasiewicz geometry.

\renewcommand{\arraystretch}{1.3} 
\definecolor{LightCyan}{rgb}{0.88,1,1}
\begin{table*}[h] 
	\small
	\centering 
	\caption{Comparison of results on SFO compelxity and PO compelxity  under Polyak-{\L}ojasiewicz condition.} \label{comparison_nonsmooth_gd}
	\vspace{2mm}
	\scalebox{0.9}{ 
		\begin{threeparttable} 
			\begin{tabular}{cllllc} \toprule
				\multirow{2}{*}{Algorithms}& &\multirow{2}{*}{Stepsize $\eta$} &\multicolumn{2}{c}{Finite-Sum} &\multirow{1}{*}{Additional}  \\ 
				\cmidrule{4-5}   
				&  & &\multicolumn{1}{c}{SFO} &\multicolumn{1}{c}{PO}  &\multirow{1}{*}{Condition} \\   \midrule
				
				ProxGD  &\citep{Karimi2016} &$\mathcal{O}(L^{-1})$  & $\mathcal{O}(n \tau \log(1/\epsilon))$   &  $\mathcal{O}(\tau \log(1/\epsilon))$ &- \\ \midrule
				
				ProxSVRG/SAGA &\citep{Reddi2016} &$\mathcal{O}(L^{-1})$  & $\mathcal{O}((n +  n^{2/3}\tau )\log(1/\epsilon))$   & $\mathcal{O}(\tau \log(1/\epsilon))$ & $L\tau \geqslant   \sqrt{n}$   \\ \midrule

				ProxSVRG$^\textbf{+}$ &\citep{Li2018}   &$\mathcal{O}(L^{-1})$ & $\mathcal{O}(  n^{2/3} \tau  \log(1/\epsilon))$& $\mathcal{O}(\tau \log(1/\epsilon))$ &$L\tau \geqslant   \sqrt{n}$  \\  \midrule
				
				\belowrulesepcolor{LightCyan}   
				\rowcolor{LightCyan}
				Prox-SpiderBoost-PL &(This Work) &$\mathcal{O}(L^{-1})$ & $\mathcal{O}((n +  \tau^2 )\log(1/\epsilon))$&  $\mathcal{O}(\tau \log(1/\epsilon))$ &- \\   \aboverulesepcolor{LightCyan}  \bottomrule
			\end{tabular}  
		\end{threeparttable} 
	}
\end{table*}  

For the case with $h=0$ (i.e., the problem objective reduces to the smooth function $f$), our algorithm achieves a total SFO complexity of $(n+L^2\tau^2)\log(1/\epsilon)$, which is the same as that achieved by SARAH \citep{Lam2017b}. However, we note that our algorithm allows to use a constant stepsize at the order of $\mathcal{O}(  1/{L} )$, whereas SARAH used a much smaller stepsize at the order of $\mathcal{O}(1/(L\sqrt{q}) )$.


\section{Prox-SpiderBoost-O  for Online Nonconvex Composite Optimization} \label{sec_online}
In this section, we study the performance of a variant of Prox-SpiderBoost for solving nonconvex composite optimization problems under the online setting.

\subsection{Unconstrained Optimization under Euclidean Geometry}
In this subsection, we study the following composite optimization problem.  
\begin{align}
\min_{x\in \mathcal{X}}	\Psi(x)  \defeq f(x)  + h(x),  f(x)=\mathbb{E}_{\zeta} [f_{\zeta}(x)]. \tag{R}
\end{align} 
Here the objective function $\Psi(x)$ consists of a population risk $\mathbb{E}_{\zeta} [f_{\zeta}(x)]$ over the underlying data distribution, a nonsmoooth but simple convex regularizer $h(x)$, and a convex constrain set $\mathcal{X}$. Such a problem can be viewed to have infinite samples as opposed to finite samples in the finite-sum problem (as in the problem (Q)), and the underlying data distribution is typically unknown a priori. Therefore, one cannot evaluate the full-gradient $\nabla f$ over the underlying data distribution in practice. For such a type of problems, we propose a variant of Prox-SpiderBoost, which applies stochastic sampling to estimate the full gradient for initializing the gradient estimator in each inner loop. We refer to this variant   as Prox-SpiderBoost-O, the details of which are summarized in \Cref{alg:Proximal_SPIDER+_online}. 
\begin{algorithm}
	\caption{Prox-SpiderBoost-O  for online optimization}
	\label{alg:Proximal_SPIDER+_online}  
	
	{\bf Input:} $\eta = \frac{1}{2L}$, $q, K, |S_1|, |S|\in \mathbb{N}$.
	
	{\bf \text{For} ~$k=0, 1, \ldots, K-1$}
	
	\quad {\textbf{If}~ $\textrm{mod}(k,q) = 0$:} 
	
	\qquad Draw $|S_1|$ samples with replacement.
	
	\qquad Set $v_k = \frac{1}{|S_1|} \sum_{i \in S_1} f_i(x_k)$.
	
	\quad {\bf{\text{Else}:} }
	
	\qquad Draw $|S|$ samples with replacement
	
	\qquad Compute $v_k$ according to \cref{spider}.
	
	\quad $x_{k+1} = \mathrm{prox}_{\eta h}(x_k - \eta v_k)$.
	
	\textbf{Output:} $x_\xi$   from $\{x_0, \cdot, x_{K-1}\}$ uniformly at random.
\end{algorithm}
It can be seen that Prox-SpiderBoost-O in \Cref{alg:Proximal_SPIDER+_online} draws $|S_1|$ stochastic samples to estimate the full gradient for initializing the gradient estimator. To analyze its performance, we introduce the following standard assumption on variance.
\begin{assum}\label{assum: variance}
The variance of stochastic gradients is bounded, i.e., there exists a constant $\sigma>0$ such that for all $x\in \mathbb{R}^d$ and all random draws of $\zeta$, it holds that $\mathbb{E}_\zeta \|\nabla f_\zeta(x) - \nabla f(x) \|^2 \leqslant \sigma^2$.
\end{assum}
Under \Cref{assum: variance}, the  variance of a mini-batch gradient with size $|S_1|$  can be  bounded by $\mathcal{O} ({\sigma^2}/{|S_1|})$. 
We obtain the following result on the oracle complexity for Prox-SpiderBoost-O in \Cref{alg:Proximal_SPIDER+_online}.
\begin{theorem}\label{online_thm}
	Let Assumptions \ref{assum: f} and \ref{assum: variance} hold and consider the problem (R) with $\mathcal{X} = \mathbb{R}^d$. Apply Prox-SpiderBoost-O with parameters $|S_1| = 24\sigma^2{\epsilon^{-2 }} , q =|S| = \sqrt{|S_1|}, \eta = \frac{1}{2L}$. Then, the corresponding output $x_\xi$ satisfies $\mathbb{E}\|G_{\eta}(x_\xi)\|  \le \epsilon$ provided that the total number $K$  of iterations satisfies 
	\begin{align*}
	K \ge \mathcal{O}\Big(\frac{L(\Psi(x_0) -  \Psi^* )}{\epsilon^2}\Big).
	\end{align*}
	Moreover, the resulting total SFO complexity is $\mathcal{O}( \epsilon^{-3 } + \epsilon^{-2})$, and the PO complexity is $\mathcal{O}(\epsilon^{-2})$.
\end{theorem}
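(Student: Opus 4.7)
The plan is to adapt the finite-sum proof of Theorem 3 (equivalently Theorem 4 with $V(x,y) = \frac{1}{2}\|x-y\|^2$) to the online setting. Prox-SpiderBoost-o differs from its finite-sum counterpart only in that the reference gradient $v_{k_0}$ at the start of each inner loop is a mini-batch estimate of size $|S_1|$ rather than the exact $\nabla f(x_{k_0})$; under Assumption 2 this introduces an additional initial variance bounded by $\sigma^2/|S_1|$, and the choice $|S_1| = 24\sigma^2/\epsilon^2$ is engineered so that this extra noise ultimately contributes $O(\epsilon^2)$ to the final bound.

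The first ingredient is the proximal descent lemma: using $L$-smoothness of $f$, optimality of $x_{k+1} = \mathrm{prox}_{\eta h}(x_k - \eta v_k)$, and Young's inequality, one obtains
\[
\mathbb{E}\Psi(x_{k+1}) \le \mathbb{E}\Psi(x_k) - c_1\, \mathbb{E}\|\widetilde{G}(x_k)\|^2 + c_2\, \mathbb{E}\|\nabla f(x_k) - v_k\|^2,
\]
where $\widetilde{G}(x_k) := (x_k - x_{k+1})/\eta$ is the inexact gradient mapping (using $v_k$ in place of $\nabla f(x_k)$) and $c_1, c_2 = \Theta(1/L)$ at $\eta = 1/(2L)$. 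Non-expansiveness of the proximal operator gives the conversion $\|G_\eta(x_k)\|^2 \le 2\|\widetilde{G}(x_k)\|^2 + 2\|\nabla f(x_k) - v_k\|^2$, which links the analysis back to the target stationarity measure. The second ingredient is the standard SPIDER variance recursion, obtained from conditional unbiasedness of the one-step increment under i.i.d.\ sampling:
\[
\mathbb{E}\|\nabla f(x_k) - v_k\|^2 \le \mathbb{E}\|\nabla f(x_{k_0}) - v_{k_0}\|^2 + \frac{L^2}{|S_2|}\sum_{j=k_0+1}^{k}\mathbb{E}\|x_j - x_{j-1}\|^2,
\]
with $\mathbb{E}\|\nabla f(x_{k_0}) - v_{k_0}\|^2 \le \sigma^2/|S_1|$ by Assumption 2. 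Summed across an inner loop of length $q = |S_2|$, the self-amplification factor is $qL^2/|S_2| = L^2$ (identical to the finite-sum case) and the initial-variance contribution accumulates to $q\sigma^2/|S_1|$ per loop.

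Summing the descent over all $K$ iterations, substituting the variance bound, and rearranging yields
\[
\frac{1}{K}\sum_{k=0}^{K-1}\mathbb{E}\|\widetilde{G}(x_k)\|^2 \le O\!\left(\frac{L(\Psi(x_0) - \Psi^*)}{K}\right) + O\!\left(\frac{\sigma^2}{|S_1|}\right),
\]
and the conversion above lifts this to the same bound (up to constants) on $\mathbb{E}\|G_\eta(x_\xi)\|^2$. Selecting $|S_1| = 24\sigma^2/\epsilon^2$ makes the variance term $O(\epsilon^2)$, and $K = O(L(\Psi(x_0) - \Psi^*)/\epsilon^2)$ makes the first term $O(\epsilon^2)$; Jensen's inequality then gives $\mathbb{E}\|G_\eta(x_\xi)\| \le \epsilon$. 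The oracle accounting is routine: at each of the $\lceil K/q\rceil$ loop starts we pay $|S_1|$ SFO calls, and at each of the remaining iterations we pay $|S_2|$, for a total of $O(K|S_1|/q + K|S_2|) = O(K\sqrt{|S_1|}) = O(L(\Psi(x_0) - \Psi^*)\,\sigma\,\epsilon^{-3})$, plus the standalone $|S_1| = O(\epsilon^{-2})$; each iteration uses one proximal call so the total PO complexity is $K = O(\epsilon^{-2})$.

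The main obstacle is the constant balancing in the descent--variance combination. With $\eta = 1/(2L)$, the identity $L^2\eta^2 = 1/4$ means that substituting the SPIDER variance bound re-introduces $\sum \mathbb{E}\|\widetilde{G}(x_k)\|^2$ on the right-hand side with a non-negligible coefficient, and one must choose the Young's-inequality parameter in the descent lemma so that the net coefficient on $\sum \mathbb{E}\|\widetilde{G}(x_k)\|^2$ remains strictly negative. The choice $q = |S_2|$ is precisely what prevents this self-amplification from acquiring an extra factor of $q$; this is the same balance used in the finite-sum proof, and the only genuinely new item in the online version is the additive $\sigma^2/|S_1|$ from the stochastic loop initialization, whose magnitude is fixed at $O(\epsilon^2)$ by the prescribed $|S_1|$.
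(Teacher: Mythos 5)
Your proposal is correct and follows essentially the same route as the paper: the paper proves this theorem by specializing its general Bregman-distance result (Theorem~\ref{p_online_thm}, itself built on Theorem~\ref{p_main_theorem}) to $V(x,y)=\frac{1}{2}\|x-y\|^2$ and $\alpha=1$, and the underlying argument there is exactly your combination of the proximal descent lemma, the SPIDER variance recursion with initial variance $\sigma^2/|S_1|$, the conversion $\mathbb{E}\|G_\eta(x_\xi)\|^2 \le 2\mathbb{E}\|\widetilde{G}(x_\xi)\|^2 + 2\mathbb{E}\|\nabla f(x_\xi)-v_\xi\|^2$, and the same oracle accounting $\lceil K/q\rceil |S_1| + K|S_2|$. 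The constant-balancing concern you flag is handled in the paper by verifying $\beta_2 = \alpha\eta - \frac{L\eta^2}{2} - \frac{\eta}{2} - \frac{\eta^3 L^2 q}{2|S_2|} = \frac{1}{16L} > 0$ at $\alpha=1$, $\eta=\frac{1}{2L}$, $q=|S_2|$, exactly as you anticipate.
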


To the best of our knowledge, the SFO complexity of \Cref{alg:Proximal_SPIDER+_online} improves the state-of-art result  $\mathcal{O} (\epsilon^{-10/3})$ \citep{Li2018,Allen_Zhu2017}  of online stochastic composite optimization by a factor of $\epsilon^{1/3}$. 

 In the smooth case with $h(x) = 0$, the problem (R) reduces to the online case of problem (P), and \Cref{alg:Proximal_SPIDER+_online} reduces to a online version of SpiderBoost. We refer to such an algorithm as SpiderBoost-O. The following corollary characterizes the performance of SpiderBoost-O to solve an online problem. 
\begin{coro} \label{coro_1}
Let Assumptions \ref{assum: f} and \ref{assum: variance} hold and consider the online setting of problem (P). Apply  SpiderBoot-O with parameters $|S_1| = 24\sigma^2{\epsilon^{-2 }} , q =|S| = \sqrt{|S_1|}, \eta = \frac{1}{2L}$. Then, the corresponding output $x_\xi$ satisfies $\mathbb{E}\| \nabla f(x_\xi)\| \le \epsilon$ provided that the total number $K$  of  iterations satisfies 
	\begin{align*}
	K \ge \mathcal{O}\Big(\frac{L(\Psi(x_0) -  \Psi^* )}{\epsilon^2}\Big).
	\end{align*}
	Moreover, the resulting total SFO complexity is $\mathcal{O}( \epsilon^{-3 } + \epsilon^{-2})$, and the PO complexity is $\mathcal{O}(\epsilon^{-2})$.
\end{coro}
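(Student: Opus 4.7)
The plan is to obtain the corollary as a direct specialization of \Cref{online_thm} to the case $h\equiv 0$. The key observation is that when $h$ is identically zero, the proximal mapping reduces to the identity: for any $y\in\mathbb{R}^d$,
\begin{align*}
\mathrm{prox}_{\eta h}(y) = \arg\min_{u\in\mathbb{R}^d}\Big\{ 0 + \tfrac{1}{2\eta}\|u-y\|^2 \Big\} = y.
\end{align*}
Consequently, the update $x_{k+1} = \mathrm{prox}_{\eta h}(x_k - \eta v_k)$ inside \Cref{alg:Proximal_SPIDER+_online} reduces exactly to the stochastic gradient step $x_{k+1} = x_k - \eta v_k$ executed by SpiderBoost-o. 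Thus Prox-SpiderBoost-o and SpiderBoost-o coincide in this regime.

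Next, I would relate the two stationarity measures. From \Cref{fact: grad},
\begin{align*}
G_\eta(x) = \tfrac{1}{\eta}\bigl(x - \mathrm{prox}_{\eta h}(x - \eta\nabla f(x))\bigr) = \tfrac{1}{\eta}\bigl(x - (x-\eta\nabla f(x))\bigr) = \nabla f(x),
\end{align*}
so the generalized gradient criterion $\mathbb{E}\|G_\eta(x_\xi)\|\le\epsilon$ used by \Cref{online_thm} collapses to the smooth criterion $\mathbb{E}\|\nabla f(x_\xi)\|\le\epsilon$ claimed by the corollary. In addition, with $h\equiv 0$ the objective $\Psi$ in problem~(R) equals $f$, and $\Psi^\ast = f^\ast$, so the iteration-count bound in \Cref{online_thm} matches the one stated here.

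With these identifications, I would simply invoke \Cref{online_thm} under Assumptions \ref{assum: f} and \ref{assum: variance}, using the parameter choices $|S_1| = 24\sigma^2\epsilon^{-2}$, $q=|S_2|=\sqrt{|S_1|}$, and $\eta=\tfrac{1}{2L}$ as stated in the hypothesis. The conclusion gives $\mathbb{E}\|G_\eta(x_\xi)\|\le\epsilon$, which by the identification above is exactly $\mathbb{E}\|\nabla f(x_\xi)\|\le\epsilon$, and the SFO and PO complexity estimates transfer verbatim. Since this is a reduction argument rather than a new analysis, there is no genuine technical obstacle; the only thing to be careful about is verifying that the assumptions required by \Cref{online_thm} (Lipschitz smoothness of each $f_i$ and bounded stochastic-gradient variance) are precisely those assumed in the corollary, which is immediate.
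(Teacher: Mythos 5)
Your proposal is correct and matches the paper's own proof, which likewise obtains the corollary by specializing \Cref{online_thm} to $h\equiv 0$ (so that the proximal map is the identity, $G_\eta=\nabla f$, and $\Psi=f$); you simply spell out the reduction in more detail than the paper does.
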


\subsection{Constrained Optimization under Non-Euclidean Geometry} \label{nonsmooth_rls_o}
 \Cref{alg:Proximal_SPIDER+_online} can be generalized to solve the online optimization problem (R) subject to a convex constraint set $\mathcal{X}$ with a general distance function. To do this, one replaces the proximal gradient update in \Cref{alg:Proximal_SPIDER+_online} with the generalized proximal gradient step in \cref{p_d_1} which is based on a proper Bregman distance $V$. For such an algorithm, we obtain the following result on the oracle complexity for Prox-SpiderBoost-O in solving constrained stochastic composite optimization under non-Euclidean geometry.
\begin{theorem} \label{p_online_thm}
	Let Assumptions \ref{assum: f} and \ref{assum: variance} hold and consider the problem (R). Apply Prox-SpiderBoost-O with a proper Bregman distance $V$ that is $\alpha$-strongly convex with $\alpha>\frac{7}{8}$. Choose the parameters as $|S_1| =   2\left( \left(\alpha - \frac{7}{8}\right)^{-1}\left(1 + \frac{1}{4\alpha^2}\right) + \frac{2}{\alpha^2} \right)\sigma^2{\epsilon^{-2 }}$, $\eta = \frac{1}{2L}$, and $q=|S|=\sqrt{|S_1|} $. Then, the algorithm outputs a point $x_\xi$ that satisfies $\mathbb{E}\|G_{\eta}(x_{\xi})\|  \le \epsilon$ provided that the total number $K$  of iterations satisfies
	\begin{align*}
	K \ge \frac{8L( \Psi(x_0) -  \Psi^*)}{\epsilon^2 } \left({\alpha - \frac{7}{8}} \right)^{-1}\left(1 + \frac{1}{4\alpha}\right) .
	\end{align*}
	Moreover, the overall SFO complexity is $\mathcal{O}( \epsilon^{-3 }  + \epsilon^{-2})$ and the PO complexity is $\mathcal{O}(\epsilon^{-2})$.
\end{theorem}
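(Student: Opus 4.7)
The plan is to adapt the proof of \Cref{p_finite_sum_thm_nonEuclidean} by accounting for the extra variance introduced at the start of each inner loop, where the full gradient $\nabla f(x_k)$ is replaced by a mini-batch estimate over $|S_1|$ samples. I will follow the same three-stage structure as in the finite-sum non-Euclidean analysis: (i) a variance bound on the SPIDER estimator $v_k$, (ii) a per-step descent inequality that uses the $\alpha$-strong convexity of $V$, and then (iii) telescoping over an inner loop of length $q$, followed by summing across the $K/q$ inner loops.

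First, I would bound $\mathbb{E}\|v_k-\nabla f(x_k)\|^2$ within an inner loop starting at index $k_0$ with $\textrm{mod}(k_0,q)=0$. By unrolling the recursion defining $v_k$ in \cref{spider} and using the independence of the mini-batches together with $L$-Lipschitz continuity of $\nabla f_i$, one obtains
\begin{align*}
\mathbb{E}\|v_k-\nabla f(x_k)\|^2 \le \frac{L^2}{|S_2|}\sum_{j=k_0+1}^{k}\mathbb{E}\|x_j-x_{j-1}\|^2 + \mathbb{E}\|v_{k_0}-\nabla f(x_{k_0})\|^2,
\end{align*}
and the new ingredient compared with the finite-sum case is that \Cref{assum: variance} yields $\mathbb{E}\|v_{k_0}-\nabla f(x_{k_0})\|^2 \le \sigma^2/|S_1|$ rather than zero. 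This is the only place where the online nature of the problem enters the estimate.

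Next, using $L$-smoothness of $f$, the definition of the generalized proximal update in \cref{p_d_1}, and the $\alpha$-strong convexity of $V$, I would derive the per-iteration descent inequality
\begin{align*}
\Psi(x_{k+1}) \le \Psi(x_k) - \Bigl(\alpha - \tfrac{1}{2} - \tfrac{L\eta}{2}\Bigr)\frac{1}{\eta}\|x_{k+1}-x_k\|^2 + \frac{\eta}{2\alpha}\|v_k-\nabla f(x_k)\|^2,
\end{align*}
via the standard trick of adding and subtracting $\inner{\nabla f(x_k)-v_k}{x_{k+1}-x_k}$ and then applying Young's inequality with weight $\alpha$. Summing this over a single inner loop, substituting the variance bound from step (i), and exploiting $\eta = 1/(2L)$ allows me to collapse the cross terms, which is where the condition $\alpha > 7/8$ emerges (exactly as in \Cref{p_finite_sum_thm_nonEuclidean}): the coefficient of $\|x_{k+1}-x_k\|^2$ must remain strictly positive after absorbing the variance contribution. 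Since $|S_2|=q=\sqrt{|S_1|}$, the $L^2/|S_2|$ factor cancels against the length of the inner loop, and one is left with a clean telescoping bound of the form
\begin{align*}
\sum_{k=k_0}^{k_0+q-1}\!\bigl(\alpha-\tfrac{7}{8}\bigr)\bigl(1+\tfrac{1}{4\alpha}\bigr)^{-1}\mathbb{E}\|G_\eta(x_{k+1})\|^2 \le 2L\,\mathbb{E}\bigl(\Psi(x_{k_0})-\Psi(x_{k_0+q})\bigr) + \frac{q\sigma^2}{2L\alpha^2 |S_1|}.
\end{align*}

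Finally, I would sum this across all $K/q$ inner loops, use $G_\eta(x_{k+1})=(x_k-x_{k+1})/\eta$ and the random output rule so that the left side becomes $K\,\mathbb{E}\|G_\eta(x_\xi)\|^2$, and bound the telescoping function-value differences by $\Psi(x_0)-\Psi^*$. The resulting inequality takes the form
\begin{align*}
\mathbb{E}\|G_\eta(x_\xi)\|^2 \le \frac{2L(\Psi(x_0)-\Psi^*)}{K}\bigl(\alpha-\tfrac{7}{8}\bigr)^{-1}\!\bigl(1+\tfrac{1}{4\alpha}\bigr) + \frac{\sigma^2}{2L\alpha^2 |S_1|}\bigl(\alpha-\tfrac{7}{8}\bigr)^{-1}\!\bigl(1+\tfrac{1}{4\alpha}\bigr).
\end{align*}
Plugging in the prescribed $|S_1|$ forces the second term to be at most $\epsilon^2/2$, and the stated lower bound on $K$ makes the first term at most $\epsilon^2/2$, yielding $\mathbb{E}\|G_\eta(x_\xi)\|^2\le\epsilon^2$ and hence $\mathbb{E}\|G_\eta(x_\xi)\|\le\epsilon$ by Jensen. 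The SFO count is $K/q\cdot|S_1|+K\cdot|S_2| = \mathcal{O}(K\sqrt{|S_1|}) = \mathcal{O}(\epsilon^{-3}+\epsilon^{-2})$, and the PO count is exactly $K=\mathcal{O}(\epsilon^{-2})$.

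The main obstacle I anticipate is tracking the coupling between $\alpha$, $\eta$, and the variance weights so that after Young's inequality and summation the coefficient $\alpha-7/8$ is the sharpest constant one can obtain; getting the explicit $|S_1|$ prefactor $2((\alpha-7/8)^{-1}(1+1/(4\alpha^2))+2/\alpha^2)$ right requires carefully apportioning the initialization variance $\sigma^2/|S_1|$ between the iteration-increment term and the residual error, rather than lumping it into one bucket. Everything else is a direct generalization of the Euclidean Prox-SpiderBoost analysis.
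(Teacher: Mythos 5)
Your proposal is correct and follows essentially the same route as the paper, which proves a general bound (Theorem~\ref{p_main_theorem}) with the inner-loop initialization variance $\epsilon_1^2=\sigma^2/|S_1|$ and then splits the resulting error into a $K$-dependent term and an $|S_1|$-dependent term, each forced below $\epsilon^2/2$. The one piece your sketch leaves implicit is that the output is measured by $\tilde g_\xi=P_{\mathcal{X}}(x_\xi,\nabla f(x_\xi),\eta)$ rather than the estimator-based step $g_\xi=P_{\mathcal{X}}(x_\xi,v_\xi,\eta)$, and the conversion via the $1/\alpha$-Lipschitz property of the prox map in \cref{p_lipschitz_g} is exactly what produces the extra $2/\alpha^2$ in the prescribed $|S_1|$ -- this is the "apportioning" you correctly anticipated as the delicate part.
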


\section{Prox-SpiderBoost-M-O for Online Nonconvex Composite Optimization}

As the online problem (R) depends on the population risk that contains infinite samples, we propose a variant of Prox-SpiderBoost-M that can solve it in an online setting. We summarize the detailed steps of the algorithm in \Cref{alg: online-ProxSPIDERM}, where we refer to it as  Prox-SpiderBoost-M-O.
\begin{algorithm}
	\caption{Prox-SpiderBoost-M-O}
\label{alg: online-ProxSPIDERM}
	{\bf Input:} $q, K \in \mathbb{N}, \{\lambda_k\}_{k=1}^{K-1}, \{\beta_k\}_{k=1}^{K-1} >0.$
	
	{\bf Set:} $\alpha_{k} = \frac{2}{\ceil[]{k/q}+1}$.
	
	{\bf Initialize:} $y_0 = x_0\in \mathbb{R}^d$.
	
	\For{$k=0, 1, \ldots, K-1$}
	{
		$z_{k} = (1-\alpha_{k+1})y_{k} + \alpha_{k+1} x_{k}$, \\
		\eIf{$\text{mod}(k, q)= 0$}
		{
			draw $\xi_1$ data samples  and compute  
			$v_{k} = \frac{1}{|\xi_1|} \sum_{i=1}^{|\xi_1|} \nabla f_{u_i}(x)$
		}
		{
			draw $\xi_2$ data samples   and compute 
			$v_k = \frac{1}{|\xi_2|}\sum_{i=1}^{|\xi_2|} (\nabla f_{u_i} (z_k) - \nabla f_{u_i} (z_{k-1})) + v_{k-1}$.
		}
		$x_{k+1} =  \mathrm{prox}_{\lambda_{k} g}\big(x_{k} - \lambda_{k} v_k\big)$, \\
		$y_{k+1} = z_{k} - \frac{\beta_{k}}{\lambda_k}x_k + \frac{\beta_{k}}{\lambda_k}  \mathrm{prox}_{\lambda_{k} g}\big(x_{k} - \lambda_{k} v_k\big)$.
	}
	{\textbf{Output:} $z_\zeta$, where $\zeta \overset{\text{Unif}}{\sim}\{0,\ldots,K-1\}$.}
\end{algorithm}

Note that unlike the Prox-SpiderBoost-M for the finite-sum case, the Prox-SpiderBoost-M-O keeps drawing new data samples from the underlying distribution  to construct the gradient estimate $v_k$. To study its convergence guarantee, we make the following standard assumption on the variance of the random sampling. 
We next present the  convergence guarantee for Prox-SpiderBoost-M-O.
\begin{theorem}\label{thm: ProxSpiderO}
	Let Assumptions \ref{assum: f} and \ref{assum: variance} hold. Apply Prox-SpiderBoost-M-O (see \Cref{alg: online-ProxSPIDERM}) to solve the problem (R). Choose any desired accuracy $\epsilon>0$ and set parameters $\alpha_{k} = \frac{2}{k+1}, q=|\xi_2| = \sqrt{|\xi_1|} = \sqrt{\frac{2\sigma^2}{\epsilon^2}}$, $\beta_k \equiv \frac{1}{8L}$ and $\lambda_k \in [\beta_k, (1+\alpha_k)\beta_{k}]$. Then, the output $z_{\zeta}$ of the algorithm satisfies $\mathbb{E}\|G_{\lambda_{\zeta}}(z_{\zeta}, \nabla f(z_{\zeta}))\| \le \epsilon$ provided that the total number of iterations $K$ satisfies
	\begin{align}
	K \ge \Theta\bigg(\frac{L(F(x_0)-F^*)}{\epsilon^2} \bigg).
	\end{align}
	Moreover, the total number of stochastic gradient calls is at most $\Theta(\epsilon^{-3})$ and the total number of proximal mapping calls is at most $\Theta(\epsilon^{-2})$.
\end{theorem}
The orders of the results in \Cref{thm: ProxSpiderO} match those of state-of-arts \cite{Fang2018}. Our result demonstrates that the momentum scheme can be applied to facilitate the convergence of Prox-SpiderBoost for solving online nonsmooth and nonconvex problems with a provable convergence guarantee.

\section{Objective Functions in Experiments}\label{app:exp}

We specify the two objective functions that we adopt in our experiments. The nonsmooth problems are the regularized versions of these problems. The first problem is the logistic regression problem with a nonconvex regularizer, which takes the following form
\begin{align*}
\min_{w\in \mathbb{R}^d} f(w):=\frac{1}{n} \sum_{i=1}^{n} \ell(w^\intercal x_i, y_i)+ \alpha \sum_{i=1}^{d} \frac{ w_i^2}{1  + w_i^2 },
\end{align*}
where $x_i\in \mathbb{R}^d$ denotes the features and $y_i\in \{\pm 1 \}$ corresponds to the labels, and $\alpha = 0.1$. We set the loss $\ell$ to be the cross-entropy loss given by
$$\ell(w^\intercal x_i, y_i)=-y_i  \log \left(\frac{1}{1 + e^{-w^Tx_i}}\right).$$


The second loss function is the following nonconvex robust linear regression problem
\begin{align*}
\min_{w \in \mathbb{R}^d} f(w):=\frac{1}{n} \sum_{i=1}^{n}  \ell(y_i - w^\intercal x_i), 
\end{align*}
where we use the nonconvex loss function $\ell (x) :=\log (\frac{x^2}{2} + 1)$. 
\\

\noindent {\Large \textbf{Technical Proofs}} 
\section{Analysis of SpiderBoost (Proof of \Cref{finite_sum_thm})} \label{proof_of_spider_plus}
 
Throughout the paper, let $n_k = \lceil k/q \rceil$ such that $(n_k-1)q \le k \le n_k q - 1$. Next, we establish our main result that yields \Cref{finite_sum_thm}. 
\begin{theorem} \label{general_thm}
	Under \Cref{assum: f}, if the parameters $\eta, q$ and $S$   are chosen such that 
	\begin{align}
	\beta_1 \triangleq \frac{\eta}{2} - \frac{L\eta^2}{2}- \frac{\eta^3L^2q }{2|S| } >0,
	\end{align}
	and  if it holds that for $\textrm{mod}(k,q) =0$, we always have\begin{align}
	\mathbb{E}\|v_{k} - \nabla f(x_{k})\|^2 \leq \epsilon_1^2, \label{parameter_gerneral_setting_1}
	\end{align}
	then  the output point $x_\xi$ of SpiderBoost satisfies 
	\begin{align}
	\mathbb{E} \|\nabla f(x_\xi)\|^2  \le \frac{2}{K\beta_1} \left(1+   \frac{ L^2 \eta^2q}{ |S| }  \right)\left( f(x_0) -  f^*  \right) + \left( \frac{\eta}{ \beta_1} + 2 + \frac{ L^2 \eta^3q}{  |S|\beta_1} \right)\epsilon_1^2.
	\end{align}
\end{theorem}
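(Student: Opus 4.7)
The plan is to track the descent of $f$ along the iterates via $L$-smoothness, absorb the aggregated variance of the SPIDER-style estimator using the telescoped variance bound stated just before the theorem, and finally convert $\|v_k\|^2$ control back into $\|\nabla f(x_k)\|^2$ control. Starting from $L$-smoothness and the update $x_{k+1} = x_k - \eta v_k$, and applying the polarization identity to $-\eta\langle \nabla f(x_k), v_k\rangle$, I would obtain the per-iteration descent
\begin{align*}
\mathbb{E} f(x_{k+1}) \le \mathbb{E} f(x_k) - \frac{\eta}{2}\mathbb{E}\|\nabla f(x_k)\|^2 - \Big(\frac{\eta}{2} - \frac{L\eta^2}{2}\Big)\mathbb{E}\|v_k\|^2 + \frac{\eta}{2}\mathbb{E}\|v_k - \nabla f(x_k)\|^2.
\end{align*}

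Next, I would aggregate the variance term. Summing the telescoped variance bound across one inner loop $\{\ell,\ldots,\ell+q-1\}$ with $\ell$ a multiple of $q$, swapping the order of the resulting double sum so that each $\mathbb{E}\|x_{i+1}-x_i\|^2$ contributes with multiplicity at most $q$, using $\|x_{i+1}-x_i\|^2 = \eta^2\|v_i\|^2$, and invoking the hypothesis $\mathbb{E}\|v_\ell - \nabla f(x_\ell)\|^2 \le \epsilon_1^2$ for the loop's initial index, then summing over all $K/q$ loops, I should arrive at
\begin{align*}
\sum_{k=0}^{K-1}\mathbb{E}\|v_k - \nabla f(x_k)\|^2 \le \frac{L^2\eta^2 q}{|S_2|}\sum_{k=0}^{K-1}\mathbb{E}\|v_k\|^2 + K\epsilon_1^2.
\end{align*}

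Plugging this into the telescoped descent inequality and using $\mathbb{E} f(x_K)\ge f^*$, the coefficient of $\sum_k\mathbb{E}\|v_k\|^2$ collapses to exactly $\beta_1$, yielding $\tfrac{\eta}{2}\sum_k\mathbb{E}\|\nabla f(x_k)\|^2 + \beta_1\sum_k\mathbb{E}\|v_k\|^2 \le (f(x_0)-f^*) + \tfrac{K\eta}{2}\epsilon_1^2$. Dropping the non-negative first term gives a clean upper bound on $\sum_k \mathbb{E}\|v_k\|^2$, and re-injecting that bound into the variance estimate above supplies a matching upper bound on $\sum_k \mathbb{E}\|v_k - \nabla f(x_k)\|^2$. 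To finish, I would apply the Young-type inequality $\|\nabla f(x_k)\|^2 \le 2\|v_k\|^2 + 2\|v_k - \nabla f(x_k)\|^2$ and average over $k$, noting that $\mathbb{E}\|\nabla f(x_\xi)\|^2 = \tfrac{1}{K}\sum_k\mathbb{E}\|\nabla f(x_k)\|^2$ under the uniform output rule; collecting coefficients of $(f(x_0)-f^*)$ and $\epsilon_1^2$ then reproduces the advertised bound exactly.

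The conceptually delicate step is the loop-wide aggregation that produces the factor $q$ in $\tfrac{L^2\eta^2 q}{|S_2|}\sum_k\mathbb{E}\|v_k\|^2$: performing the double-sum swap across an entire inner loop, rather than bounding each iteration separately, is precisely what tolerates a constant stepsize $\eta=\Theta(1/L)$, in contrast to the iteration-by-iteration accounting of SPIDER that forces $\eta=\mathcal{O}(\epsilon/L)$. The other choice that matters is routing through $\sum_k\mathbb{E}\|v_k\|^2$ rather than $\sum_k\mathbb{E}\|\nabla f(x_k)\|^2$ before converting back via Young's inequality, since the variance estimate naturally expresses $\|v_k-\nabla f(x_k)\|^2$ in terms of $\|v_i\|^2$; this routing is exactly what produces the multiplicative factor $(1 + L^2\eta^2 q/|S_2|)$ in front of $(f(x_0) - f^*)$ in the final inequality.
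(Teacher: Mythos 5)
Your proposal is correct and follows essentially the same route as the paper's proof: an $L$-smoothness descent step, loop-wide aggregation of the SPIDER variance via the double-sum swap to produce the $\beta_1$ coefficient on $\sum_k\mathbb{E}\|v_k\|^2$, and recovery of $\mathbb{E}\|\nabla f(x_\xi)\|^2$ via $\|\nabla f(x_k)\|^2\le 2\|v_k\|^2+2\|v_k-\nabla f(x_k)\|^2$ after re-injecting the $\sum_k\mathbb{E}\|v_k\|^2$ bound into the variance estimate. The only cosmetic differences are that you keep an extra $-\tfrac{\eta}{2}\|\nabla f(x_k)\|^2$ term from polarization (which you then drop) and you average the variance bound over all $k$ directly rather than via the paper's $q/K$ probability argument for $\xi$; both yield the identical final constants.
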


\subsection{Proof of \Cref{general_thm}}
We first present an auxiliary lemma from \citep{Fang2018}.
\begin{lemma}[\citep{Fang2018}, Lemma 1]\label{SpiderBoost_lemma_Zhang}
	Let \Cref{assum: f} hold. The gradient estimator $v_k$  generated by \cref{spider} satisfies for all $(n_k-1)q +1 \le k \le n_kq-1$,
	\begin{align}
	\mathbb{E} \|v_k - \nabla f(x_k)\|^2 \le \frac{L^2}{|S|} \mathbb{E}\|x_k - x_{k-1}\|^2 + \mathbb{E}\|v_{k-1} - \nabla f(x_{k-1})\|^2.
	\end{align} 
\end{lemma}
Telescoping \Cref{SpiderBoost_lemma_Zhang} over $k$ from $(n_k-1)q +1$ to $k$, where $k\le n_kq-1$, we obtain that
\begin{align}
\mathbb{E} \|v_k - \nabla f(x_k)\|^2 &\leq \sum_{i=(n_k-1)q}^{k-1}\frac{L^2}{|S|} \mathbb{E}\|x_{i+1} - x_{i}\|^2 + \mathbb{E}\|v_{(n_k-1)q} - \nabla f(x_{(n_k-1)q})\|^2 \nonumber \\
&\leq  \sum_{i=(n_k-1)q}^{k}\frac{L^2}{|S|} \mathbb{E}\|x_{i+1} - x_{i}\|^2 + \mathbb{E}\|v_{(n_k-1)q} - \nabla f(x_{(n_k-1)q})\|^2.  \label{SpiderBoost_eq: variance bound}
\end{align}
We   note that the above inequality also holds for $k = (n_k-1)q$, which can be simply checked by plugging $k = (n_k-1)q$ into above inequality.

\begin{proof}
	By \Cref{assum: f}, the entire objective function $f$ is $L$-smooth, which further implies that 
	\begin{align}
	f(x_{k+1}) &\le f(x_k) + \inner{\nabla f(x_k)}{x_{k+1}-x_k} + \frac{L}{2}\|x_{k+1}-x_k\|^2 \nonumber\\
	&\overset{(i)}{=} f(x_k) - \eta\inner{\nabla f(x_k)}{v_k} + \frac{L\eta^2}{2}\|v_k\|^2 \nonumber\\
	&= f(x_k) - \eta\inner{\nabla f(x_k)-v_k}{v_k} -\eta \|v_k\|^2+ \frac{L\eta^2}{2}\|v_k\|^2 \nonumber\\
	&\overset{(ii)}{\le} f(x_k) + \frac{\eta}{2} \|\nabla f(x_k)-v_k\|^2  - (\frac{\eta}{2} - \frac{L\eta^2}{2}) \|v_k\|^2 \nonumber,
	\end{align}
	where (i) follows from the update rule of SpiderBoost, (ii) uses the inequality that $\inner{x}{y}\le ({\|x\|^2+\|y\|^2})/{2}$ for all $x,y\in \mathbb{R}^d$. Taking expectation on both sides of the above inequality yields that
	 
	\begin{align}
	\mathbb{E} &f(x_{k+1}) \nonumber \\
	 &\le \mathbb{E} f(x_k) + \frac{\eta}{2} \mathbb{E}\|\nabla f(x_k)-v_k\|^2  - (\frac{\eta}{2} - \frac{L\eta^2}{2}) \mathbb{E}\|v_k\|^2  \nonumber\\
	&\overset{(i)}{\le}  \mathbb{E} f(x_k) + \frac{\eta}{2} \sum_{i=(n_k-1)q}^{k }\frac{L^2}{|S|} \mathbb{E}\|x_{i+1} - x_{i}\|^2 + \frac{\eta}{2}\mathbb{E}\|v_{(n_k-1)q} - \nabla f(x_{(n_k-1)q})\|^2  - (\frac{\eta}{2} - \frac{L\eta^2}{2}) \mathbb{E}\|v_k\|^2  \nonumber\\
	&\overset{(ii)}{=} \mathbb{E} f(x_k) + \frac{\eta^3}{2} \sum_{i=(n_k-1)q}^{k}\frac{L^2}{|S|} \mathbb{E}\|v_i\|^2 + \frac{ \eta}{2} \epsilon_1^2-  (\frac{\eta}{2} - \frac{L\eta^2}{2}) \mathbb{E}\|v_k\|^2, \label{eq: 1}
	\end{align}
	where (i) follows from \cref{SpiderBoost_eq: variance bound}, and (ii) follows from \cref{parameter_gerneral_setting_1}, and the fact that $x_{k+1} = x_k - \eta v_k$. Next, telescoping \cref{eq: 1} over $k$ from $(n_k-1)q$ to $k$ where $k \le n_kq-1$ and noting that for $(n_k-1)q \le j \le  n_kq-1$, $n_j = n_k$ , we obtain
	\begin{align}
	\mathbb{E} &f(x_{k+1}) \nonumber \\
	 &\le \mathbb{E} f(x_{(n_k-1) q}) + \frac{\eta^3}{2} \sum_{j=(n_k - 1)q}^{k} \sum_{i=(n_k-1)q}^{j}\frac{L^2}{|S|} \mathbb{E}\|v_i\|^2 +\frac{\eta}{2}\sum_{j=(n_k-1)q}^{k} \epsilon_1^2 - (\frac{\eta}{2} - \frac{L\eta^2}{2}) \sum_{j=(n_k - 1)q}^{k}\mathbb{E}\|v_j\|^2 \nonumber \\ 
	&\overset{(i)}{\le}   \mathbb{E} f(x_{(n_k-1) q}) + \frac{\eta^3}{2} \sum_{j=(n_k - 1)q}^{k} \sum_{i=(n_k-1)q}^{k}\frac{L^2}{|S|} \mathbb{E}\|v_i\|^2 +\frac{\eta}{2}\sum_{j=(n_k-1)q}^{k} \epsilon_1^2 - (\frac{\eta}{2} - \frac{L\eta^2}{2}) \sum_{j=(n_k - 1)q}^{k}\mathbb{E}\|v_j\|^2 \nonumber \\
	&\overset{(ii)}{\le} \mathbb{E} f(x_{(n_k-1) q}) + \frac{\eta^3L^2q}{2|S|} \sum_{i=(n_k - 1)q}^{k}   \mathbb{E}\|v_i\|^2 +\frac{\eta  }{2}\sum_{j=(n_k-1)q}^{k} \epsilon_1^2- (\frac{\eta}{2} - \frac{L\eta^2}{2}) \sum_{j=(n_k - 1)q}^{k}\mathbb{E}\|v_j\|^2 \nonumber \\
	&= \mathbb{E} f(x_{(n_k-1) q}) - \sum_{i=(n_k - 1)q}^{k} \left(\frac{\eta}{2} - \frac{L\eta^2}{2}- \frac{\eta^3L^2q }{2|S| }\right) \mathbb{E}\|v_i\|^2 +\frac{\eta  }{2}\sum_{i=(n_k-1)q}^{k} \epsilon_1^2 \nonumber \\
	&\overset{(iii)}{=} \mathbb{E} f(x_{(n_k-1) q}) -  \sum_{i=(n_k - 1)q}^{k}  \left(\beta_1 \mathbb{E}\|v_i\|^2 - \frac{\eta  }{2}\epsilon_1^2 \right)\label{eq: 2}
	\end{align}
	where (i)    extends the summation of the second term from $j$ to $k$, (ii) follows from the fact that $k  \leqslant n_kq-1$. Thus, we obtain
	\begin{align*}
	\sum_{j=(n_k - 1)q}^{k} &\sum_{i=(n_k-1)q}^{k}\frac{L^2}{|S|} \mathbb{E}\|v_i\|^2 \le \frac{(k+q-n_kq +1 )L^2}{|S|} \sum_{i=(n_k-1)q}^{k}  \mathbb{E}\|v_i\|^2   \le \frac{qL^2}{|S|} \sum_{i=(n_k-1)q}^{k}  \mathbb{E}\|v_i\|^2,  
	\end{align*}
	and (iii) follows from the definition of $\beta_1$.
	
	We continue the proof by further driving
	\begin{align*}
	\mathbb{E} f(x_{K}) - &\mathbb{E}f(x_0) \\
	&= (\mathbb{E} f(x_{q}) - \mathbb{E}f(x_0)) + (\mathbb{E} f(x_{2q}) - \mathbb{E}f(x_q)) +\cdots + (	\mathbb{E} f(x_{K}) - \mathbb{E} f(x_{(n_k-1) q}))\\
	&\overset{(i)}{\leq} -   \sum_{i=0}^{q-1} \left(\beta_1 \mathbb{E}\|v_i\|^2 - \frac{\eta  }{2}\epsilon_1^2 \right) -    \sum_{i=q}^{2q-1}\left(\beta_1 \mathbb{E}\|v_i\|^2 - \frac{\eta  }{2}\epsilon_1^2 \right)- \cdots -   \sum_{i=(n_K -1)q}^{K-1} \left(\beta_1 \mathbb{E}\|v_i\|^2 - \frac{\eta  }{2}\epsilon_1^2 \right)\\
	&= -   \sum_{i=0}^{K-1 }  \left(\beta_1 \mathbb{E}\|v_i\|^2 - \frac{\eta  }{2}\epsilon_1^2 \right) = -   \sum_{i=0}^{K-1} \beta_1 \mathbb{E}\|v_i\|^2 + \frac{ K \eta}{2}\epsilon_1^2 ,
	\end{align*}
	where (i) follows from \cref{eq: 2}.
	Note that $\mathbb{E} f(x_{K}) \ge f^* \triangleq \inf_{x\in \mathbb{R}^d} f(x)$. Hence,  the above inequality implies that
	\begin{align}
	\sum_{i=0}^{K-1}  \beta_1 \mathbb{E}\|v_i\|^2    \le  f(x_0) -  f^* +  \frac{K\eta  }{2}\epsilon_1^2. \label{total_sum}
	\end{align}
	We next bound $\mathbb{E}  \|\nabla f(x_\xi)\|^2$, where $\xi$ is selected uniformly at random from $\{0,\ldots,K-1\}$. Observe that
	\begin{align}
	\mathbb{E}  \|\nabla f(x_\xi)\|^2  = \mathbb{E}  \|\nabla f(x_\xi) - v_\xi + v_\xi\|^2 \leq 2\mathbb{E}  \|\nabla f(x_\xi) - v_\xi \|^2 + 2\mathbb{E}  \| v_\xi \|^2 \label{final_0}.
	\end{align}
	Next, we bound the two terms on the right hand side of the above inequality. First, note that
	\begin{align}
	\mathbb{E} \|v_\xi\|^2  = \frac{1}{K }\sum_{i=0}^{K-1} \mathbb{E} \|v_i\|^2 \le  \frac{f(x_0) -  f^*}{K\beta_1 }  +  \frac{\eta }{2\beta_1}\epsilon_1^2, \label{eq: 5}
	\end{align}
	where the last inequality follows from \cref{total_sum}. On the other hand, note that
	\begin{align}
	\mathbb{E}  \|\nabla f(x_\xi)-v_\xi\|^2  &\overset{(i)}{\le}\mathbb{E}    \sum_{i=(n_\xi-1)q}^{\xi }\frac{L^2}{|S| } \mathbb{E}\|x_{i+1} - x_{i}\|^2 + \epsilon_1^2 \overset{(ii)}{=} \epsilon_1^2 +\mathbb{E}   \sum_{i=(n_\xi-1)q}^{\xi }\frac{L^2\eta^2}{ |S|   } \mathbb{E}\|v_i\|^2  \nonumber \\
	&\overset{(iii)}{\leq} \epsilon_1^2 + \mathbb{E}   \sum_{i=(n_\xi-1)q}^{\min\{(n_\xi)q   - 1, K-1\} }\frac{L^2\eta^2}{ |S| } \mathbb{E}\|v_i\|^2  \overset{(iv)}{\leq}  \epsilon_1^2 + \frac{q}{K } \sum_{i=0}^{K-1} \frac{L^2\eta^2}{|S|}\mathbb{E}\|v_{i}\|^2 \nonumber \\
	&\overset{(v)}{\le}  \epsilon_1^2 +  \frac{ L^2 \eta^2q}{K|S|\beta_1} \left( f(x_0) - f^*  \right) +  \frac{ L^2 \eta^3q}{2 |S|\beta_1}\epsilon_1^2, \label{final_2}
	\end{align}
	where (i) follows from \cref{SpiderBoost_eq: variance bound,parameter_gerneral_setting_1}, (ii) follows from the fact that $x_{k+1} = x_k - \eta v_k$, (iii) follows from the definition of $n_\xi$, which implies $\xi \leqslant \min \{(n_\xi) q-1,K-1  \} $, (iv) follows from the fact that the probability that $n_\xi = 1,2,\cdots,n_K $   is less than or equal to $q/(K)$, and (v) follows from \cref{total_sum}. 
	
	Substituting \cref{eq: 5,final_2} into \cref{final_0}, we obtain
	\begin{align*}
	\mathbb{E} \|\nabla f(x_\xi)\|^2  &\le  \frac{2\left(f(x_0) -  f^*\right)}{K\beta_1 }  +  \frac{\eta }{ \beta_1}\epsilon_1^2 +  2\epsilon_1^2 +  \frac{2 L^2 \eta^2q}{K|S|\beta_1} \left( f(x_0) - f^*  \right) +  \frac{ L^2 \eta^3q}{  |S|\beta_1}\epsilon_1^2\\
	&= \frac{2}{K\beta_1} \left(1+   \frac{ L^2 \eta^2q}{ |S| }  \right)\left( f(x_0) -  f^*  \right) + \left( \frac{\eta}{ \beta_1} + 2 + \frac{ L^2 \eta^3q}{  |S|\beta_1} \right)\epsilon_1^2. 
	\end{align*}
\end{proof}

\subsection{Proof of  \Cref{finite_sum_thm} }
Based on the parameter setting in \Cref{finite_sum_thm} that 
\begin{align}
  q =\sqrt{n} , S  = \sqrt{n}, \text{ and }  \eta = \frac{1}{2L}, \label{finite_sum_parameter_setting}
\end{align} 
we obtain
\begin{align}
\beta_1 = \frac{\eta}{2} - \frac{L\eta^2}{2}- \frac{\eta^3L^2q }{2|S| } = \frac{1}{16L} > 0. \label{finte_sum_cond_1}
\end{align}
Moreover, for $\textrm{mod}(k,q) =0$, as the algorithm is designed to take the full-batch gradient of the finite-sum problem, we have
\begin{align}
\mathbb{E} \|v_k - \nabla f(x_k)\|^2 = \mathbb{E} \| \nabla f(x_k) - \nabla f(x_k)\|^2 = 0. \label{finte_sum_cond_2}
\end{align}
\Cref{finte_sum_cond_1,finte_sum_cond_2} imply that the parameters in \Cref{finite_sum_thm}  satisfy the assumptions in \Cref{general_thm} with $\beta_1 = 1/(16L)$ and $\epsilon_1 = 0$. Plugging \cref{finite_sum_parameter_setting,finte_sum_cond_1,finte_sum_cond_2}   into \Cref{general_thm}, we obtain that, after $K$ iterations, the output of SpiderBoost satisfies
\begin{align}
\mathbb{E} \|\nabla f(x_\xi)\|^2  \le \frac{40L}{K }  \left( f(x_0) -  f^*  \right) .
\end{align} 
To ensure $\mathbb{E} \|\nabla f(x_\xi)\|   \leqslant \epsilon$, it is sufficient to ensure $\mathbb{E} \|\nabla f(x_\xi)\|^2   \leqslant \epsilon^2$ (because $\left( \mathbb{E} \|\nabla f(x_\xi)\|\right)^2\leq \mathbb{E} \|\nabla f(x_\xi)\|^2 $ due to Jensen's inequality). Thus, we need the total number $K$ of iterations  satisfies that  $ \frac{40L}{K }  \left( f(x_0) -  f^*  \right) \leq \epsilon^2$, which gives
\begin{align}
K =  \frac{40L}{\epsilon^2 }  \left( f(x_0) -  f^*  \right). \label{finite_sum_number_of_iteration}
\end{align}
Then, the total SFO complexity is given by
\begin{align*}
\left\lceil  \frac{K}{q} \right\rceil \cdot  n + K\cdot S \leqslant (K+q)\cdot \frac{n}{q} + K\cdot S =  K\sqrt{n} + n + K\sqrt{n}  = \mathcal{O}(\sqrt{n} \epsilon^{-2} + n),
\end{align*}
where the last equation follows from \cref{finite_sum_number_of_iteration}, which completes the proof.

\section{Analysis of Prox-SpiderBoost and Prox-SpiderBoost-O (Proofs of \Cref{p_finite_sum_thm,p_finite_sum_thm_nonEuclidean,online_thm,p_online_thm})}
We first establish the following major theorem, which is applicable to both the finite-sum and the online problem. We then generalize it for these two cases.
\begin{theorem} \label{p_main_theorem}
	Under \Cref{assum: f}, choose  a proper prox-funtion $V(\cdot): \mathcal{X} \rightarrow \mathbb{R}$ with modulus $\alpha$. Then, if the parameters $\eta, q$ and $S$   are chosen such that 
	\begin{align}
	\beta_2 \triangleq \alpha \eta  - \frac{L\eta^2}{2}-\frac{\eta}{2}- \frac{\eta^3L^2q }{2|S| } >0,
	\end{align}
	and  if it holds that for mod$(k,q) = 0$,  we always have
	\begin{align}
	\mathbb{E}\|v_{k} - \nabla f(x_{k})\|^2 \leq \epsilon_1^2,  \label{p_general_outer_bound}
	\end{align}
	then, the output point $x_\xi$ of Prox-SpiderBoost or Prox-SpiderBoost-O satisfies 
	\begin{align}
	\mathbb{E} \|\tilde{g}_\xi\|^2 \le  \frac{2}{K\beta_2} \left(1+   \frac{ L^2 \eta^2q}{\alpha^2 |S| }  \right)\left( f(x_0) -  f^*  \right) + \left( \frac{\eta}{ \beta_2} + \frac{2}{\alpha^2} + \frac{ L^2 \eta^3q}{ \alpha^2 |S|\beta_2} \right)\epsilon_1^2,
	\end{align}
	where $\tilde{g}_\xi =  P_{\mathcal{X}}(x_\xi,\nabla f(x_\xi), \eta) $.
\end{theorem}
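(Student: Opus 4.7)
My plan is to mimic the proof of \Cref{general_thm} but adapted to the Bregman proximal setting, working with two flavors of the generalized gradient: the \emph{stochastic} one $g_k := \frac{1}{\eta}(x_k - x_{k+1})$, which equals $P_{\mathcal{X}}(x_k, v_k, \eta)$ coming from the actual update, and the \emph{true} one $\tilde g_k := P_{\mathcal{X}}(x_k, \nabla f(x_k), \eta)$ that appears in the conclusion. The inner-loop telescoping argument will be carried out in terms of $g_k$, and the conversion to $\tilde g_k$ will be done at the end using the $\alpha$-strong convexity of $V$.

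First I would prove a one-step descent inequality. Combining the $L$-smoothness of $f$ with the first-order optimality condition for the subproblem defining $x_{k+1}$, together with the three-point inequality implied by $\alpha$-strong convexity of $V$, one obtains
\begin{align*}
\Psi(x_{k+1}) \le \Psi(x_k) - \Big(\alpha\eta - \tfrac{L\eta^2}{2} - \tfrac{\eta}{2}\Big)\|g_k\|^2 + \tfrac{\eta}{2}\|\nabla f(x_k) - v_k\|^2.
\end{align*}
This is the analogue of the inequality before (i) in the proof of \Cref{general_thm}, with $\|g_k\|^2$ in place of $\|v_k\|^2$, $\Psi$ in place of $f$, and an extra $\alpha$ coming from the Bregman step. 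Next I would invoke \Cref{lemma: Zhang} together with $\|x_{i+1}-x_i\|^2 = \eta^2\|g_i\|^2$ and the hypothesis \eqref{p_general_outer_bound} to bound $\mathbb{E}\|v_k - \nabla f(x_k)\|^2$ by $\sum_{i=(n_k-1)q}^{k-1}\frac{L^2\eta^2}{|S_2|}\mathbb{E}\|g_i\|^2 + \epsilon_1^2$. Telescoping the descent inequality across an inner loop, then across all $\lceil K/q\rceil$ inner loops, and collecting the coefficient of $\mathbb{E}\|g_k\|^2$ gives the $\beta_2$ appearing in the statement and yields
\begin{align*}
\sum_{k=0}^{K-1}\beta_2\,\mathbb{E}\|g_k\|^2 \le \Psi(x_0) - \Psi^* + \tfrac{K\eta}{2}\epsilon_1^2,
\end{align*}
exactly as in \cref{total_sum}.

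Finally, I would transfer the bound from $g_\xi$ to $\tilde g_\xi$. The $\alpha$-strong convexity of $V$ makes the Bregman proximal map Lipschitz in its linear argument with constant $1/\alpha$, hence $\|\tilde g_k - g_k\| \le \frac{1}{\alpha}\|\nabla f(x_k) - v_k\|$. Using $\|\tilde g_\xi\|^2 \le 2\|g_\xi\|^2 + \frac{2}{\alpha^2}\|\nabla f(x_\xi) - v_\xi\|^2$, averaging over $\xi$ uniform in $\{0,\ldots,K-1\}$, and bounding the second term by the same variance expansion as in \cref{final_2} (which reintroduces a $\frac{L^2\eta^2 q}{|S_2|}$ factor weighted by $\frac{1}{\alpha^2}$), I would arrive at the claimed inequality.

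The main obstacle is the descent inequality: unlike the Euclidean case, the proximal step is not a gradient step, so one cannot directly expand $\inner{\nabla f(x_k)}{x_{k+1}-x_k}$ in terms of $v_k$. The right tool is the optimality condition $\inner{v_k + \frac{1}{\eta}(\nabla\omega(x_{k+1}) - \nabla\omega(x_k))}{u - x_{k+1}} + h(u) - h(x_{k+1}) \ge 0$ for all $u\in\mathcal{X}$, evaluated at $u=x_k$; combined with $L$-smoothness and convexity of $h$, and with the identity $V(x_k,x_{k+1}) + V(x_{k+1},x_k) \ge \alpha\|x_{k+1}-x_k\|^2$, this produces the factor $\alpha\eta - \tfrac{L\eta^2}{2}$. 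Once these constants are tracked carefully, the rest of the proof follows the template of \Cref{general_thm} line by line.
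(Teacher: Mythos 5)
Your proposal is correct and follows essentially the same route as the paper: the same one-step descent inequality with coefficient $\alpha\eta - \tfrac{L\eta^2}{2} - \tfrac{\eta}{2}$ (the paper obtains it by citing Lemma 1 and Proposition 1 of Ghadimi et al., whose proof is exactly the optimality-condition/strong-convexity argument you sketch), the same inner-loop telescoping via \Cref{lemma: Zhang} with $\|x_{i+1}-x_i\| = \eta\|g_i\|$, and the same final conversion $\|\tilde g_\xi\|^2 \le 2\|g_\xi\|^2 + \tfrac{2}{\alpha^2}\|\nabla f(x_\xi)-v_\xi\|^2$ using the $\tfrac{1}{\alpha}$-Lipschitz property of the prox map. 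No substantive differences to report.
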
 
As stated in the theorem, we require  $\beta = \left(\alpha \eta - \frac{L\eta^2}{2} - \frac{\eta}{2} - \frac{\eta^3 L^2 }{2 }    \right) >0$ to conclude our theorem. A simple case would be $\eta =  {1}/{(2L)}$ and $w(x) = \|x\|^2/2 $, which gives  $\alpha = 1$ and $\beta = 1/(16L)$. 
 
\subsection{Proof of \Cref{p_main_theorem}}
To prove \Cref{p_main_theorem}, we first introduce a useful lemma.
\begin{lemma}[\citep{Ghadimi2016}, Lemma 1 and Proposition 1]
	Let $\mathcal{X}$ be a closed convex set in $\mathbb{R}^d$, $h:\mathcal{X} \rightarrow \mathbb{R}$ be a convex function, but possibly nonsmooth,  and $V: \mathcal{X} \rightarrow \mathbb{R}$ be defined in \cref{p_strongly_convex}. Moreover,  define
	\begin{align}
	x^+ &= \arg \min_{u\in \mathcal{X}} \left\{\inner{g}{u} + \frac{1}{\eta}V(u,x) + h(u) \right\} \\
	P_{\mathcal{X}}(x,g,\eta) &= \frac{1}{\eta}(x - x^+), \label{p_gradient}
	\end{align}
	where $g  \in \mathbb{R}^d$, $x \in \mathcal{X}$, and  $\eta > 0$. Then, the following statement  hold 
	\begin{align}
	\inner{g}{P_{\mathcal{X}}(x,g,\eta)} &\geqslant \alpha \|P_{\mathcal{X}}(x,g,\eta)\|^2 + \frac{1}{\eta}[h(x^+) - h(x)]. \label{p_subProblem_inequality}
	\end{align} 
	Moreover, for any $g_1,g_2 \in \mathbb{R}^d$, we have
	\begin{align}
	\|P_{\mathcal{X}}(x,g_1,\eta) - P_{\mathcal{X}}(x,g_2,\eta)   \| &\leqslant \frac{1}{\alpha} \|g_1 - g_2\|. \label{p_lipschitz_g}
	\end{align}
\end{lemma}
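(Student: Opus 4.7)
The plan is to prove the two assertions separately, both using first-order optimality conditions for the convex subproblem that defines $x^+$, combined with the $\alpha$-strong convexity of the kernel $\omega$ (equivalently, strong convexity of $V(\cdot,x)$ in its first argument).

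For the first inequality, I would write down the variational characterization of $x^+$. Since $x^+$ minimizes a convex function over $\mathcal{X}$, there exist $\xi_h \in \partial h(x^+)$ and $\xi_{\mathcal{X}} \in N_{\mathcal{X}}(x^+)$ such that
\begin{align*}
g + \frac{1}{\eta}\bigl(\nabla \omega(x^+) - \nabla \omega(x)\bigr) + \xi_h + \xi_{\mathcal{X}} = 0,
\end{align*}
where I used $\nabla_u V(u,x) = \nabla \omega(u) - \nabla \omega(x)$. Taking the inner product of this identity with $x - x^+$ and using (i) $\alpha$-strong convexity of $\omega$ to get $\inner{\nabla \omega(x)-\nabla \omega(x^+)}{x-x^+}\ge \alpha\|x-x^+\|^2$, (ii) convexity of $h$ to replace $\inner{\xi_h}{x-x^+}$ by the lower bound $h(x^+)-h(x)$, and (iii) $\inner{\xi_{\mathcal{X}}}{x-x^+}\le 0$ since $x\in\mathcal{X}$ and $\xi_{\mathcal{X}}\in N_{\mathcal{X}}(x^+)$, I obtain $\inner{g}{x-x^+} \ge \frac{\alpha}{\eta}\|x-x^+\|^2 + h(x^+)-h(x)$. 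Dividing by $\eta$ and invoking $P_{\mathcal{X}}(x,g,\eta) = (x-x^+)/\eta$ yields exactly the claimed inequality.

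For the Lipschitz property, let $x_1^+$ and $x_2^+$ denote the minimizers defining $P_{\mathcal{X}}(x,g_1,\eta)$ and $P_{\mathcal{X}}(x,g_2,\eta)$ respectively. I would write the first-order optimality of $x_i^+$ in the variational-inequality form
\begin{align*}
\inner{g_i + \tfrac{1}{\eta}\bigl(\nabla \omega(x_i^+)-\nabla\omega(x)\bigr)}{u - x_i^+} + h(u) - h(x_i^+) \ge 0 \quad \forall\, u\in\mathcal{X},
\end{align*}
then instantiate $u = x_2^+$ in the inequality for $i=1$ and $u=x_1^+$ in the inequality for $i=2$, and add. The $h$-terms cancel and so do the $\nabla \omega(x)$-terms, leaving
\begin{align*}
\inner{g_1 - g_2}{x_2^+ - x_1^+} \ge \frac{1}{\eta}\inner{\nabla \omega(x_1^+)-\nabla \omega(x_2^+)}{x_1^+ - x_2^+} \ge \frac{\alpha}{\eta}\|x_1^+ - x_2^+\|^2,
\end{align*}
where the last step uses $\alpha$-strong convexity of $\omega$. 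Applying Cauchy--Schwarz on the left and dividing by $\|x_1^+-x_2^+\|$ gives $\|x_1^+-x_2^+\| \le (\eta/\alpha)\|g_1-g_2\|$, and multiplying by $1/\eta$ produces the stated Lipschitz bound on $P_{\mathcal{X}}(x,\cdot,\eta)$.

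The two parts are essentially routine once the optimality conditions are set up, so the main step to get right is the subdifferential calculus with the normal cone $N_{\mathcal{X}}(x^+)$ and the subgradient of the nonsmooth convex $h$; the only conceptual point is that strong convexity of $V$ (inherited from $\omega$) is what produces both the quadratic lower bound $\alpha\|\cdot\|^2$ in the first claim and the monotonicity estimate yielding the $1/\alpha$ Lipschitz constant in the second. No technical obstacle is anticipated beyond correctly combining these standard ingredients.
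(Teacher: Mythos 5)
Your proof is correct. Note that the paper itself does not prove this lemma at all --- it imports it verbatim from Ghadimi, Lan and Zhang (Lemma~1 and Proposition~1 there), so there is no in-paper argument to compare against. What you have written is essentially the standard derivation from that reference: the first inequality follows from the optimality condition $0\in g+\tfrac{1}{\eta}(\nabla\omega(x^+)-\nabla\omega(x))+\partial h(x^+)+N_{\mathcal{X}}(x^+)$ paired with strong convexity of $\omega$, convexity of $h$, and the normal-cone sign condition; the Lipschitz bound follows from adding the two variational inequalities and using strong monotonicity of $\nabla\omega$ plus Cauchy--Schwarz. Both steps check out exactly as you set them up (in particular the cancellation of the $h$- and $\nabla\omega(x)$-terms, and the final division by $\eta$ turning $\tfrac{\eta}{\alpha}\|g_1-g_2\|$ into the $\tfrac{1}{\alpha}$ constant on $P_{\mathcal{X}}$). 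The only caveat worth recording is that you work with the Euclidean inner product and norm throughout, which matches the lemma as stated in this paper; in the generic-norm setting mentioned alongside the Bregman distance one would need the dual norm in the Cauchy--Schwarz step, but that generality is not required here.
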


Now, we are ready to prove \Cref{p_main_theorem}. To ease our nation, let $g_k=P_{\mathcal{X}}(x_k,v_k,\eta)$, which is defined in \cref{p_gradient}.
	We begin with the analysis at iteration $k$. By the Lipschitz continuity of $\nabla f$, we obtain
	\begin{align}
	f(x_{k+1}) &\le f(x_k) + \inner{\nabla f(x_k)}{x_{k+1}-x_k} + \frac{L}{2}\|x_{k+1}-x_k\|^2 \nonumber\\
	&\overset{(i)}{=} f(x_k) - \eta\inner{\nabla f(x_k)}{g_k} + \frac{L\eta^2}{2}\|g_k\|^2 \nonumber\\
	&= f(x_k) - \eta\inner{\nabla f(x_k)-v_k}{g_k} -\eta \inner{v_k}{g_k} + \frac{L\eta^2}{2}\|g_k\|^2 \nonumber\\
	&\overset{(ii)}{\le} f(x_k) + \frac{\eta}{2} \|\nabla f(x_k)-v_k\|^2     -\eta \inner{v_k}{g_k} + \left(\frac{L\eta^2}{2} + \frac{\eta}{2} \right) \|g_k\|^2\nonumber \\
	&\overset{(iii)}{\le} f(x_k) + \frac{\eta}{2} \|\nabla f(x_k)-v_k\|^2     -\alpha \eta \|g_k\|^2 +h(x_k)-h(x_{k+1}) +  \left(\frac{L\eta^2}{2} + \frac{\eta}{2} \right) \|g_k\|^2 \label{p_step_1}
	\end{align}
	where (i) follows from the definition of $P_{\mathcal{X}}(x_k,v_k,\eta)$, and the update rule of Prox-SpiderBoost and Prox-SpiderBoost-O, (ii) follows from the inequality that $\inner{x}{y}\le \frac{\|x\|^2+\|y\|^2}{2}$ for $x,y\in \mathbb{R}^d$, and (iii) follows from \cref{p_subProblem_inequality} with $g = v_k, x = x_k$ and $P_{\mathcal{X}}(x_k,v_k,\eta) = g_k$.

Taking expectation on both sides of  \cref{p_step_1}, and arranging it with the definition of  $	\Psi(x)  \defeq f(x)  + h(x)$, we obtain
\begin{align*}
\mathbb{E} \Psi(x_{k+1}) &\leq  \mathbb{E} \Psi(x_{k}) +\frac{\eta}{2}\mathbb{E}  \|\nabla f(x_k)-v_k\|^2    - \left(\alpha \eta - \frac{L\eta^2}{2} - \frac{\eta}{2} \right) \mathbb{E} \|g_k\|^2\\
&\overset{(i)}{\leq} \mathbb{E} \Psi(x_{k}) +\frac{\eta }{2}\sum_{i=(n_k-1)q}^{k} \frac{L^2}{|S|}\mathbb{E}\|x_{i+1} - x_{i}\|^2 \\
&\qquad \qquad +  \frac{\eta}{2}\mathbb{E}\|v_{(n_k-1)q} - \nabla f(x_{(n_k-1)q})\|^2   - \left(\alpha \eta - \frac{L\eta^2}{2} - \frac{\eta}{2} \right) \mathbb{E} \|g_k\|^2 \\
&\overset{(ii)}{\leq} \mathbb{E} \Psi(x_{k}) +\frac{\eta^3  }{2 }\sum_{i=(n_k-1)q}^{k} \frac{L^2}{|S|}\mathbb{E}\|g_i\|^2 + \frac{\eta \epsilon_1^2}{2}  - \left(\alpha \eta - \frac{L\eta^2}{2} - \frac{\eta}{2} \right) \mathbb{E} \|g_k\|^2
\end{align*}
where (i) follows from \cref{SpiderBoost_eq: variance bound}, and (ii) follows from \cref{p_general_outer_bound} and the fact that $x_{k+1} = x_k - \eta g_k$. Telescoping the above inequality over $k$ from $(n_k-1)q$ to $k$ where $k \le n_kq-1$ and noting that for $(n_k-1)q \le j \le  n_kq-1$, $n_j = n_k$, we have
\begin{align}
\mathbb{E}  \Psi(x_{k+1}) - & \mathbb{E} \Psi(x_{(n_k-1) q}) \nonumber  \\
&\le  \frac{\eta^3}{2} \sum_{j=(n_k - 1)q}^{k} \sum_{i=(n_k-1)q}^{j}\frac{L^2}{|S|} \mathbb{E}\|g_i\|^2 +\frac{\eta}{2}\sum_{j=(n_k-1)q}^{k} \epsilon_1^2 - \left(\alpha \eta - \frac{L\eta^2}{2} - \frac{\eta}{2} \right) \sum_{j=(n_k - 1)q}^{k}\mathbb{E}\|g_j\|^2 \nonumber \\ 
&\overset{(i)}{\le}  \frac{\eta^3}{2} \sum_{j=(n_k - 1)q}^{k} \sum_{i=(n_k-1)q}^{k}\frac{L^2}{|S|} \mathbb{E}\|g_i\|^2 +\frac{\eta}{2}\sum_{j=(n_k-1)q}^{k} \epsilon_1^2 - \left(\alpha \eta - \frac{L\eta^2}{2} - \frac{\eta}{2} \right) \sum_{j=(n_k - 1)q}^{k}\mathbb{E}\|g_j\|^2 \nonumber \\
&\overset{(ii)}{\le}   \frac{\eta^3L^2q}{2|S|} \sum_{i=(n_k - 1)q}^{k}   \mathbb{E}\|g_i\|^2 +\frac{\eta  }{2}\sum_{j=(n_k-1)q}^{k} \epsilon_1^2- \left(\alpha \eta - \frac{L\eta^2}{2} - \frac{\eta}{2} \right) \sum_{j=(n_k - 1)q}^{k}\mathbb{E}\|g_j\|^2 \nonumber \\
&= - \sum_{i=(n_k - 1)q}^{k} \left(\alpha \eta  - \frac{L\eta^2}{2}-\frac{\eta}{2}- \frac{\eta^3L^2q }{2|S| }\right) \mathbb{E}\|g_i\|^2 +\frac{\eta  }{2}\sum_{i=(n_k-1)q}^{k} \epsilon_1^2 \nonumber \\
&\overset{(iii)}{=}   -  \sum_{i=(n_k - 1)q}^{k}  \left(\beta_2 \mathbb{E}\|g_i\|^2 - \frac{\eta  }{2}\epsilon_1^2 \right) \label{p_per_iteration}
\end{align}
where (i)    extends the summation of second term from $j$ to $k$, (ii) follows from the fact that $k  \leqslant n_kq-1$ and thus
\begin{align*}
\sum_{j=(n_k - 1)q}^{k} &\sum_{i=(n_k-1)q}^{k}\frac{L^2}{|S|} \mathbb{E}\|g_i\|^2 \le \frac{(k+q-n_kq +1 )L^2}{|S|} \sum_{i=(n_k-1)q}^{k}  \mathbb{E}\|g_i\|^2   \le \frac{qL^2}{|S|} \sum_{i=(n_k-1)q}^{k}  \mathbb{E}\|g_i\|^2,  
\end{align*}
and (iii) follows from the definition of $\beta_2$. We continue to derive
\begin{align*}
\mathbb{E} \Psi(x_{K}) - &\mathbb{E}\Psi(x_0) \\
 &= (\mathbb{E} \Psi(x_{q}) - \mathbb{E}\Psi(x_0)) + (\mathbb{E} \Psi(x_{2q}) - \mathbb{E}\Psi(x_q)) +\cdots + (	\mathbb{E} \Psi(x_{K}) - \mathbb{E} \Psi(x_{(n_k-1) q}))\\
&\overset{(i)}{\leq} -   \sum_{i=0}^{q-1} \left(\beta_2 \mathbb{E}\|g_i\|^2 - \frac{\eta  }{2}\epsilon_1^2 \right) -    \sum_{i=q}^{2q-1}\left(\beta_2 \mathbb{E}\|g_i\|^2 - \frac{\eta  }{2}\epsilon_1^2 \right)- \cdots -   \sum_{i=(n_K -1)q}^{K-1} \left(\beta_2 \mathbb{E}\|g_i\|^2 - \frac{\eta  }{2}\epsilon_1^2 \right)\\
&= -   \sum_{i=0}^{K-1 }  \left(\beta_2 \mathbb{E}\|g_i\|^2 - \frac{\eta  }{2}\epsilon_1^2 \right) = -   \sum_{i=0}^{K-1} \beta_2 \mathbb{E}\|g_i\|^2 + \frac{ K \eta}{2}\epsilon_1^2 ,
\end{align*}
where (i) follows from \cref{p_per_iteration}.
Note that $\mathbb{E} \Psi(x_{K}) \ge \Psi^* \triangleq \inf_{x\in \mathbb{R}^d} \Psi(x)$. The   above inequality implies that
\begin{align}
\sum_{i=0}^{K-1}  \beta_2 \mathbb{E}\|g_i\|^2    \le  \Psi(x_0) -  \Psi^* +  \frac{K\eta  }{2}\epsilon_1^2. \label{p_total_sum}
\end{align}

We next bound the output of algorithms. Define $\tilde{g}_\xi =   P(x_\xi,\nabla f(x_\xi), \eta) $, where $\xi$ is selected uniformly at random from $\{0,\ldots,K-1\}$. Observe that
\begin{align}
\mathbb{E} \|\tilde{g}_\xi\|^2 &\leq 2\mathbb{E} \| g_\xi \|^2 + 2\mathbb{E} \|\tilde{g}_\xi - g_\xi\|^2 \overset{(i)}{\leq} 2\mathbb{E} \| g_\xi \|^2 + \frac{2}{\alpha^2} \mathbb{E} \|\nabla f(x_\xi) - v_\xi\|^2  \label{p_final_0}
\end{align}
where (i) follows from the definition of $\tilde{g}_k,g_k$ and the property of $g_k$ and $\tilde{g}_k$ in \cref{p_lipschitz_g}.
 
Next, we bound the two terms on the right hand side of the above inequality. First, note that
\begin{align}
\mathbb{E} \|g_\xi\|^2  = \frac{1}{K }\sum_{i=0}^{K-1} \mathbb{E} \|g_i\|^2 \le  \frac{\Psi(x_0) -  \Psi^*}{K\beta_2 }  +  \frac{\eta }{2\beta_2}\epsilon_1^2, \label{p_eq: 5}
\end{align}
where the last inequality follows from \cref{p_total_sum}. On the other hand, note that
\begin{align}
\mathbb{E}  \|\nabla f(x_\xi)-v_\xi\|^2  &\overset{(i)}{\le}\mathbb{E}    \sum_{i=(n_\xi-1)q}^{\xi }\frac{L^2}{|S| } \mathbb{E}\|x_{i+1} - x_{i}\|^2 + \epsilon_1^2 \overset{(ii)}{=} \epsilon_1^2 +\mathbb{E}   \sum_{i=(n_\xi-1)q}^{\xi }\frac{L^2\eta^2}{ |S|   } \mathbb{E}\|g_i\|^2  \nonumber \\
&\overset{(iii)}{\leq} \epsilon_1^2 + \mathbb{E}   \sum_{i=(n_\xi-1)q}^{\min\{(n_\xi)q   - 1, K-1\} }\frac{L^2\eta^2}{ |S| } \mathbb{E}\|g_i\|^2  \overset{(iv)}{\leq}  \epsilon_1^2 + \frac{q}{K } \sum_{i=0}^{K-1} \frac{L^2\eta^2}{|S|}\mathbb{E}\|g_{i}\|^2 \nonumber \\
&\overset{(v)}{\le}  \epsilon_1^2 +  \frac{ L^2 \eta^2q}{K|S|\beta_2} \left( \Psi(x_0) - \Psi^*  \right) +  \frac{ L^2 \eta^3q}{2 |S|\beta_2}\epsilon_1^2, \label{p_final_2}
\end{align}
where (i) follows from \cref{SpiderBoost_eq: variance bound,p_general_outer_bound}, (ii) follows from the fact that $x_{k+1} = x_k - \eta g_k$, (iii) follows from the definition of $n_\xi$, which implies $\xi \leqslant \min \{(n_\xi) q-1,K-1  \} $, (iv) follows from the fact that the probability that $n_\xi = 1$ or $2$ ,  $\cdots$ or $n_k $   is less than or equal to $q/K$, and (v) follows from \cref{p_eq: 5}.

Substituting \cref{p_eq: 5,p_final_2} into \cref{p_final_0} yields
\begin{align*}
\mathbb{E} \|\tilde{g}_\xi \|^2  &\le  \frac{2\left(f(x_0) -  f^*\right)}{K\beta_2 }  +  \frac{\eta }{ \beta_2}\epsilon_1^2 +  \frac{2}{\alpha^2}\left( \epsilon_1^2 +  \frac{  L^2 \eta^2q}{K|S|\beta_2} \left( f(x_0) - f^*  \right) +  \frac{ L^2 \eta^3q}{  2|S|\beta_2}\epsilon_1^2\right)\\
&= \frac{2}{K\beta_2} \left(1+   \frac{ L^2 \eta^2q}{\alpha^2 |S| }  \right)\left( \Psi(x_0) -  \Psi^*  \right) + \left( \frac{\eta}{ \beta_2} + \frac{2}{\alpha^2} + \frac{ L^2 \eta^3q}{ \alpha^2 |S|\beta_2} \right)\epsilon_1^2 ,
\end{align*}
which completes the proof.

\subsection{Proof of \Cref{p_finite_sum_thm}}
 \begin{proof}
 	\Cref{p_finite_sum_thm} as a special case follows from the more general \Cref{p_finite_sum_thm_nonEuclidean} that we develop in \Cref{general_smooth_rlt} with the choices of the Bregman distance function $V(x,y) = \frac{1}{2} \|x - y\|^2$ and $\alpha = 1$.  
 \end{proof}

\subsection{Proof of \Cref{p_finite_sum_thm_nonEuclidean}}
Based on the parameter setting in \Cref{p_finite_sum_thm} that 
\begin{align}
\alpha > \frac{7}{8},  q =\sqrt{n} , S = \sqrt{n}, \text{ and }  \eta = \frac{1}{2L}, \label{p_finite_sum_parameter_setting}
\end{align} 
we obtain
\begin{align}
\beta_1 =  \alpha \eta  - \frac{L\eta^2}{2}-\frac{\eta}{2}- \frac{\eta^3L^2q }{2|S| } =  \frac{1}{2L}(\alpha - \frac{7}{8}) > 0. \label{p_finte_sum_cond_1}
\end{align}
Moreover, for mod$(k,q) =0$, as the algorithm is designed to take the full-batch gradient of the finite-sum problem, we have
\begin{align}
\mathbb{E} \|v_k - \nabla f(x_k)\|^2 = \mathbb{E} \| \nabla f(x_k) - \nabla f(x_k)\|^2 = 0. \label{p_finte_sum_cond_2}
\end{align}
\Cref{p_finte_sum_cond_1,p_finte_sum_cond_2} imply that the parameters   in  the finite-sum case satisfy the assumptions in \Cref{p_main_theorem} with $\beta_1 =  (\alpha -  {7}/{8})/(2L)$ and $\epsilon_1 = 0$. Plugging \cref{p_finite_sum_parameter_setting,p_finte_sum_cond_1,p_finte_sum_cond_2}   into \Cref{p_main_theorem}, we obtain that, after $K$ iterations, the output of Prox-SpiderBoost satisfies
\begin{align}
\mathbb{E} \| \tilde{g}_\xi\|^2  \le \frac{4L}{K } \left({\alpha - \frac{7}{8}} \right)^{-1}\left(1 + \frac{1}{4\alpha}\right) \left( \Psi(x_0) -  \Psi^*  \right) .
\end{align} 
To ensure $\mathbb{E} \|\tilde{g}_\xi\|  \leqslant \epsilon$, it is sufficient to ensure  $\mathbb{E} \|\tilde{g}_\xi\|^2  \leqslant \epsilon^2$, thus, we obtain  \begin{align}
K =  \frac{4L}{\epsilon^2 } \left({\alpha - \frac{7}{8}} \right)^{-1}\left(1 + \frac{1}{4\alpha}\right) \left( \Psi(x_0) -  \Psi^*  \right). \label{p_finite_sum_number_of_iteration}
\end{align}
Then, the SFO is 
\begin{align*}
\left\lceil  \frac{K}{q} \right\rceil \cdot  n + K\cdot S \leqslant (K+q)\cdot \frac{n}{q} + K\cdot S =  K\sqrt{n} + n + K\sqrt{n}  = \mathcal{O}(\sqrt{n} \epsilon^{-2} + n),
\end{align*}
where the last equation follows from \cref{finite_sum_number_of_iteration}. The proximal oracle follows from the total iteration in \cref{p_finite_sum_number_of_iteration},  which completes the proof.

\subsection{Proof of \Cref{online_thm}}
\begin{proof}
	\Cref{online_thm} follows as a special case from the more general \Cref{p_online_thm}	that we develop in \cref{nonsmooth_rls_o} with the choices of Bregman distance function $V(x,y) = \frac{1}{2} \|x - y\|^2$ and $\alpha = 1$.
\end{proof}

\subsection{Proof of \Cref{p_online_thm}}
Based on the parameter setting in \Cref{p_online_thm} that 
\begin{align}
\alpha > \frac{7}{8} , S_1 =   2\left( \left(\alpha - \frac{7}{8}\right)^{-1}\left(1 + \frac{1}{4\alpha^2}\right) + \frac{2}{\alpha^2} \right)\sigma^2{\epsilon^{-2 }} , q =\sqrt{S_1} , S = \sqrt{S_1}, \text{ and }  \eta = \frac{1}{2L},  \label{p_online_parameter_setting}
\end{align} 
we obtain
\begin{align}
\beta_2 =  \alpha \eta  - \frac{L\eta^2}{2}-\frac{\eta}{2}- \frac{\eta^3L^2q }{2|S| } =  \frac{1}{2L}(\alpha - \frac{7}{8})   > 0. \label{p_online_cond_1}
\end{align}
Moreover, for mod$(k,q) =0$, we have
\begin{align}
\mathbb{E} \|v_k - \nabla f(x_k)\|^2 &= \mathbb{E} \left\| \frac{1}{|S_1|}\sum_{i \in S_1} \nabla f_i(x_k) - \nabla f(x_k) \right\|^2 =  \frac{1}{| S_1|^2} \left\| \sum_{i \in S_1} \nabla f_i(x_k) - \nabla f(x_k)\right\|^2 \\
&\overset{(i)}{=}   \frac{1}{| S_1|^2} \sum_{i \in S_1} \left\|   \nabla f_i(x_k) - \nabla f(x_k)\right\|^2 =  \frac{1}{| S_1| } \left\|   \nabla f_i(x_k) - \nabla f(x_k)\right\|^2 \overset{(ii)}{=} \frac{\sigma^2}{|S_1|} \\
&\overset{(iii)}{\le}   \left( \left(\alpha - \frac{7}{8}\right)^{-1}\left(1 + \frac{1}{4\alpha^2}\right) + \frac{2}{\alpha^2} \right)^{-1} {\frac{\epsilon^2 }{2}} . \label{p_online_cond_2}
\end{align}
where (i) follows from $\mathbb{E} \nabla f_i(x_k) -  \nabla f(x_k) = 0 $, and the fact that the samples from $S_1$ are drawn with replacement, and (iii) follows from \cref{p_online_parameter_setting}.

 \Cref{p_online_cond_1,p_online_cond_2} imply that the parameters  in  the online case satisfy the assumptions in \Cref{p_main_theorem} with $\beta_2 =   (\alpha -  {7}/{8})/(2L) $ and $\epsilon_1^2 = \left( \left(\alpha - \frac{7}{8}\right)^{-1}\left(1 + \frac{1}{4\alpha^2}\right) + \frac{2}{\alpha^2} \right)^{-1} {\frac{\epsilon^2 }{2}}$. Plugging \cref{p_online_parameter_setting,p_online_cond_1,p_online_cond_2}   into \Cref{p_main_theorem}, we obtain that, after $K$ iterations, the output of Prox-SpiderBoost-O satisfies
\begin{align}
\mathbb{E} \|\tilde{g}_\xi\|^2 &\le  \frac{2}{K\beta_2} \left(1+   \frac{ L^2 \eta^2q}{\alpha^2 |S| }  \right)\left( \Psi(x_0) -  \Psi^*  \right) + \left( \frac{\eta}{ \beta_2} + \frac{2}{\alpha^2} + \frac{ L^2 \eta^3q}{ \alpha^2 |S|\beta_2} \right)\epsilon_1^2\\
&= \frac{4L}{K } \left({\alpha - \frac{7}{8}} \right)^{-1}\left(1 + \frac{1}{4\alpha}\right) \left( \Psi(x_0) -  \Psi^*  \right)  +  \frac{\epsilon^2}{2}.
\end{align} 
To ensure $\mathbb{E} \| \tilde{g}_\xi\|   \leqslant \epsilon$, it is sufficient to ensure $\mathbb{E} \| \tilde{g}_\xi\|^2   \leqslant \epsilon^2$, thus, we need 
\begin{align}
K =  \frac{8L}{\epsilon^2 } \left({\alpha - \frac{7}{8}} \right)^{-1}\left(1 + \frac{1}{4\alpha}\right) \left( \Psi(x_0) -  \Psi^*  \right) . \label{p_online_number_of_iteration}
\end{align}
Then, the total SFO complexity is 
\begin{align*}
\left\lceil  \frac{K}{q} \right\rceil \cdot  S_1 + K\cdot S \leqslant (K+q)\cdot \frac{S_1}{q} + K\cdot S =  K\sqrt{S_1} + S_1+ K\sqrt{S_1}  = \mathcal{O}( \epsilon^{-3 } + \epsilon^{-2}  ),
\end{align*}
where the last equation follows from \cref{p_online_number_of_iteration}. The proximal oracle follows from the total iteration in \cref{p_online_number_of_iteration}, which finishes the proof.

\subsection{Proof of \Cref{coro_1} }
\begin{proof}
	\Cref{coro_1} follows directly from \Cref{online_thm}, becasue the online setting  of problem (P) is a special case of the problem (R).
\end{proof}

\section{Analysis of Prox-SpiderBoost-PL (Proof of \Cref{convex_thm})}
Let us consider one outer loop. Following a similar proof as that of eq.(25) in \citep{Li2018}, we obtain the following inequality for Prox-SpiderBoost-PL in finite-sum case.
\begin{align*}
\mathbb{E} \Psi(x_{k+1}) \le \mathbb{E} \big[\Psi(x_{k}) - (\frac{1}{2\eta} - \frac{L}{2})\|x_{k+1}-x_k\|^2 - (\frac{1}{3\eta}-L)\|\overline{x_{k+1}} - x_k\|^2 +\eta\|\nabla f(x_k)-v_k\|^2 \big],
\end{align*}
where $\overline{x_{k+1}} := \mathrm{prox}_{\eta g}(x_k - \eta \nabla f(x_k))$. Substituting the variance bound of Spider into the above inequality we obtain that
\begin{align*}
\mathbb{E} \Psi(x_{k+1}) \le \mathbb{E} \big[\Psi(x_{k}) - (\frac{1}{2\eta} - \frac{L}{2})\|x_{k+1}-x_k\|^2 - (\frac{1}{3\eta}-L)\|\overline{x_{k+1}} - x_k\|^2 +\eta\sum_{i=(n_k-1)q}^{k}\frac{L^2}{|S|} \|x_{i+1}-x_i\|^2 \big].
\end{align*}
Summing the above inequality over $k$ from $(n_k-1)q$ to $n_kq-2$ and relax the upper bound of $i$ to $n_kq-2$, we further obtain that
\begin{align*}
\mathbb{E} \Psi(x_{n_k q-1}) \le \mathbb{E} \Psi(x_{(n_k-1) q}) -  \sum_{i=(n_k-1)q}^{n_kq-2}(\frac{1}{2\eta} - \frac{L}{2} - \frac{\eta L^2(q-2)}{|S|})\mathbb{E}\|x_{i+1}-x_i\|^2 - (\frac{1}{3\eta}-L)\sum_{i=(n_k-1)q}^{n_kq-2}\mathbb{E}\|\overline{x_{i+1}} - x_i\|^2.
\end{align*}
Noting that $q=|S|, \eta L = \frac{1}{8}$, we further obtain that
\begin{align*}
\mathbb{E} \Psi(x_{n_k q-1}) &\le \mathbb{E} \Psi(x_{(n_k-1) q}) - \sum_{i=(n_k-1)q}^{n_kq-2}3L\mathbb{E}\|x_{i+1}-x_i\|^2 - L\eta^2\sum_{i=(n_k-1)q}^{n_kq-2}\mathbb{E}\|\mathcal{G}_\eta(x_i)\|^2 \\
&\le \mathbb{E} \Psi(x_{(n_k-1) q}) - L\eta^2\sum_{i=(n_k-1)q}^{n_kq-2}\mathbb{E}\|\mathcal{G}_\eta(x_i)\|^2 
\end{align*}
Since $\mathbb{E} \Psi(x_{n_k q-1})\ge \Psi^*$, the above inequality further implies that
\begin{align*}
\sum_{i=(n_k-1)q}^{n_kq-2}\mathbb{E}\|\mathcal{G}_\eta(x_i)\|^2  &\le 64L(\mathbb{E} \Psi(x_{(n_k-1) q}) - \Psi^*).
\end{align*}
By the scheme of Prox-SpiderBoost-gd, we know that $\mathbb{E}\|\mathcal{G}_\eta(x_{n_k q})\|^2 = \frac{1}{q-2}\sum_{i=(n_k-1) q+1}^{n_k q-2}\mathbb{E}\|\mathcal{G}_\eta(x_{i})\|^2$. Therefore, combining this inequality with the above inequality, we obtain that
\begin{align*}
\mathbb{E}\|\mathcal{G}_\eta(x_{n_k q})\|^2 &= \frac{1}{q-2}\sum_{i=(n_k-1) q+1}^{n_k q-2}\mathbb{E}\|\mathcal{G}_\eta(x_{i})\|^2 \\
&\le \frac{64L}{q-2}(\mathbb{E} \Psi(x_{(n_k-1) q}) - \Psi^*) \\
&\le \frac{64L\tau}{q-2} \mathbb{E} \|\mathcal{G}_\eta(x_{(n_k-1) q})\|^2.
\end{align*}

In order to produce a point such that $\mathbb{E}\|\mathcal{G}_\eta(x_{t q})\| \le \epsilon$, we deduce from the above inequality that at least $t= \Theta(\log \frac{1}{\epsilon} / \log \frac{q}{L\tau})$ number of outer loops is needed. Note that $|S| = q=\Theta(L\tau)$, we conclude that $t= \Theta(\log \frac{1}{\epsilon})$. In summary, the total proximal oracle complexity (PO) is in the order $O(q\log \frac{1}{\epsilon}) = O(\tau\log \frac{1}{\epsilon})$, and the total stochastic first-order oracle complexity (SFO) is $O((n+q|S|)\log \frac{1}{\epsilon}) = O((n+L^2\tau^2)\log \frac{1}{\epsilon})$.

\section{Analysis of Prox-SpiderBoost-M and SpiderBoost-M-O (Proof of \Cref{thm: ProxSpiderM} and  \Cref{thm: ProxSpiderO})}
\subsection{Proof of \Cref{thm: ProxSpiderM}}
In this section, we provide the convergence analysis of Prox-SpiderBoost-M. 
Throughout, for any $k\in \mathbb{N}$, denote $\tau(k) \in \mathbb{N}$ the unique integer such that $(\tau(k)-1)q \le k \le \tau(k) q - 1$. We also define $\Gamma_0 = 0, \Gamma_1 = 1$ and $\Gamma_k = (1-\alpha_k)\Gamma_{k-1}$ for $k=2,3,...$. Since we set $\alpha_k = \frac{2}{\ceil[]{k/q}+1}$, it is easy to check that $\Gamma_k = \frac{2}{\ceil[]{k/q}(\ceil[]{k/q}+1)}$.
We first provide some auxiliary lemmas that are useful for the analysis later.

\subsection*{Auxiliary Lemmas}

We first present an auxiliary lemma from \cite{Fang2018}.
\begin{lemma}{\cite{Fang2018}}\label{lemma: Zhang}
	Under \Cref{assum: f}, the estimation  $v_k$ of gradient constructed by SPIDER satisfies that for all $(\tau(k)-1)q +1 \le k \le \tau(k)q-1$,
	\begin{align*}
	\mathbb{E} \|v_k - \nabla f(z_k)\|^2 &\le \frac{L^2}{|\xi_k|} \mathbb{E}\|z_k - z_{k-1}\|^2 +\mathbb{E}\|v_{k-1} - \nabla f(z_{k-1})\|^2.
	\end{align*}
\end{lemma}

Telescoping \Cref{lemma: Zhang} and noting that $v_k = \nabla f(z_k)$ for all $k$ such that $\textrm{mod}(k,q) =0$, we obtain the following bound.
\begin{lemma}\label{aux: 1}
	Under \Cref{assum: f}, the estimation  $v_k$ of gradient constructed by SPIDER satisfies that for all $k\in \mathbb{N}$,
	\begin{align}
	\mathbb{E} \|v_k - \nabla f(z_k)\|^2 \le \sum_{i=(\tau(k)-1)q}^{k-1}\frac{L^2}{|\xi_i|} \mathbb{E}\|z_{i+1} - z_{i}\|^2. \label{eq: variance bound}
	\end{align}
\end{lemma}

Next, recall the following definition of the gradient mapping for some $\eta>0$ and $x, u \in \mathbb{R}^d$:
\begin{align*}
 G_\eta(x,u) := \frac{1}{\eta} \big(x-\mathrm{prox}_{\eta h}(x-\eta u) \big).
\end{align*}
Based on this definition, we can rewrite the updates of \Cref{alg: ProxSPIDERM} as follows:
\begin{align}
	z_{k} &= (1-\alpha_{k+1})y_{k} + \alpha_{k+1} x_{k}, \nonumber\\
	x_{k+1} &=  x_k - \lambda_{k} G_{\lambda_k}(x_k, v_k), \nonumber\\
    y_{k+1} &= z_{k} - \beta_{k} G_{\lambda_k}(x_k, v_k). \nonumber
\end{align}
Next, we prove the following auxiliary lemma. 
\begin{lemma}\label{aux: 3}
	Let the sequences $\{x_k\}_k, \{y_k\}_k, \{z_k\}_k$ be generated by \Cref{alg: ProxSPIDERM}. Then, the following inequalities hold
	\begin{align}
	y_k - x_k &= \Gamma_k\sum_{t=1}^k \frac{\lambda_{t-1} - \beta_{t-1}}{\Gamma_t} G_{\lambda_{t-1}}(x_{t-1}, v_{t-1}),\\
	\|y_k - x_k\|^2 &\le \Gamma_k\sum_{t=1}^k \frac{\lambda_{t-1} - \beta_{t-1}}{\alpha_t\Gamma_t} \|G_{\lambda_{t-1}}(x_{t-1}, v_{t-1})\|^2,\\
	\|z_{k+1}-z_k\|^2 &\le 2\beta_{k}^2\|G_{\lambda_{k}}(x_{k}, v_{k})\|^2 + 2\alpha_{k+2}^2 \Gamma_{k+1}\sum_{t=1}^{k+1} \frac{(\lambda_{t-1} - \beta_{t-1})^2}{\alpha_t\Gamma_t} \|G_{\lambda_{t-1}}(x_{t-1}, v_{t-1})\|^2.
	\end{align}
\end{lemma}
\begin{proof}
	We prove the first equality. By the update rule of the momentum scheme, we obtain that
	\begin{align}
	y_k - x_k &= z_{k-1} - \beta_{k-1} G_{\lambda_{k-1}}(x_{k-1}, v_{k-1}) - (x_{k-1} - \lambda_{k-1} G_{\lambda_{k-1}}(x_{k-1}, v_{k-1})) \nonumber\\
	&= (1-\alpha_k) (y_{k-1} - x_{k-1}) + (\lambda_{k-1} - \beta_{k-1}) G_{\lambda_{k-1}}(x_{k-1}, v_{k-1}). 
	\end{align}
	Dividing both sides by $\Gamma_k$ and noting that $\frac{1-\alpha_k}{\Gamma_k} = \frac{1}{\Gamma_{k-1}}$, we further obtain that
	\begin{align}
	\frac{y_k - x_k}{\Gamma_k} &= \frac{y_{k-1} - x_{k-1}}{\Gamma_{k-1}} + \frac{\lambda_{k-1} - \beta_{k-1}}{\Gamma_k} G_{\lambda_{k-1}}(x_{k-1}, v_{k-1}). 
	\end{align}
	Telescoping the above equality over $k$ yields the first desired equality. 
	
	Next, we prove the second inequality. Based on the first equality, we obtain that
	\begin{align}
	\|y_k - x_k \|^2 &= \|\Gamma_{k} \sum_{t=1}^{k} \frac{\lambda_{t-1} - \beta_{t-1}}{\Gamma_t} G_{\lambda_{t-1}}(x_{t-1}, v_{t-1})\|^2 \nonumber \\
	&= \|\Gamma_{k} \sum_{t=1}^{k} \frac{\alpha_t}{\Gamma_t} \frac{\lambda_{t-1} - \beta_{t-1}}{\alpha_t} G_{\lambda_{t-1}}(x_{t-1}, v_{t-1})\|^2 \nonumber \\
	&\overset{(i)}{\le} \Gamma_{k} \sum_{t=1}^{k} \frac{\alpha_t}{\Gamma_t} \frac{(\lambda_{t-1} - \beta_{t-1})^2}{\alpha_t^2}  \|G_{\lambda_{t-1}}(x_{t-1}, v_{t-1})\|^2 \nonumber \\
	&= \Gamma_{k} \sum_{t=1}^{k} \frac{(\lambda_{t-1} - \beta_{t-1})^2}{\Gamma_t\alpha_t}  \|G_{\lambda_{t-1}}(x_{t-1}, v_{t-1})\|^2,
	\end{align}
	where (i) uses the facts that $\{\Gamma_k\}_k$ is a decreasing sequence, $\sum_{t=1}^{k} \frac{\alpha_t}{\Gamma_t} = \frac{1}{\Gamma_k}$ and Jensen's inequality.
	
	Finally, we prove the third inequality. By the update rule of the momentum scheme, we obtain that $z_{k+1} - z_{k} = y_{k+1} - z_k + \alpha_{k+2} (x_{k+1} - y_{k+1})$. Then, we further obtain that
	\begin{align}
	\|z_{k+1} - z_{k}\| &\le \|y_{k+1} - z_k\| + \alpha_{k+2} \|x_{k+1} - y_{k+1}\| \nonumber \\
	&\le \beta_{k} \|G_{\lambda_k}(x_k, v_k)\| + \alpha_{k+2} \sqrt{\|x_{k+1} - y_{k+1}\|^2} \nonumber \\
	&\le \beta_{k} \|G_{\lambda_k}(x_k, v_k)\| + \alpha_{k+2} \sqrt{\Gamma_{k+1} \sum_{t=1}^{k+1} \frac{(\lambda_{t-1} - \beta_{t-1})^2}{\Gamma_t\alpha_t}  \|G_{\lambda_{t-1}}(x_{t-1}, v_{t-1})\|^2} \nonumber.
	\end{align}
	The desired result follows by taking the square on both sides of the above inequality and using the fact that $(a+b)^2 \le 2a^2+2b^2$.
\end{proof}

We also need the following lemma, which was established as Lemma 1 and Proposition 1 in \cite{Ghadimi2016}.
\begin{lemma}[Lemma 1 and Proposition 1, \cite{Ghadimi2016}]\label{aux: 4}
	Let $g$ be a proper and closed convex function. Then, for all $u, v, x\in \mathbb{R}^d$ and $\eta>0$, the following statements hold:
	\begin{align*}
		&\inner{u}{G_{\eta}(x,u)} \ge \|G_{\eta}(x,u)\|^2 + \frac{1}{\eta} \big(g(\mathrm{prox}_{\eta g}(x-\eta u)) - g(x) \big), \\
		&\|G_{\eta}(x,u) - G_{\eta}(x,v)\| \le \|u-v\|.
	\end{align*}
\end{lemma}

{\em Proof of \Cref{thm: ProxSpiderM}}: 

Consider any iteration $k$ of the algorithm. By smoothness of $f$, we obtain that
\begin{align}
f(x_k) &\le f(x_{k-1}) + \inner{\nabla f(x_{k-1})}{x_k - x_{k-1}} + \frac{L}{2}\|x_k - x_{k-1}\|^2 \nonumber \\
&= f(x_{k-1}) + \inner{\nabla f(x_{k-1})}{- \lambda_{k-1} G_{\lambda_{k-1}}(x_{k-1}, v_{k-1})} + \frac{L\lambda_{k-1}^2}{2}\|G_{\lambda_{k-1}}(x_{k-1}, v_{k-1})\|^2 \nonumber \\
&= f(x_{k-1}) - \lambda_{k-1} \inner{\nabla f(x_{k-1})-v_{k-1}}{G_{\lambda_{k-1}}(x_{k-1}, v_{k-1})} -  \lambda_{k-1} \inner{v_{k-1}}{G_{\lambda_{k-1}}(x_{k-1}, v_{k-1})} \nonumber\\
&\quad+ \frac{L\lambda_{k-1}^2}{2}\|G_{\lambda_{k-1}}(x_{k-1}, v_{k-1})\|^2 \nonumber\\
&\overset{(i)}{\le} f(x_{k-1}) - \lambda_{k-1} \inner{\nabla f(x_{k-1})-v_{k-1}}{G_{\lambda_{k-1}}(x_{k-1}, v_{k-1})} -  \lambda_{k-1} \|G_{\lambda_{k-1}}(x_{k-1}, v_{k-1})\|^2  \nonumber\\ 
&\quad - \big(h(\mathrm{prox}_{\lambda_{k-1} h}(x_{k-1}-\lambda_{k-1} v_{k-1})) - h(x_{k-1}) \big) + \frac{L\lambda_{k-1}^2}{2}\|G_{\lambda_{k-1}}(x_{k-1}, v_{k-1})\|^2 \nonumber\\
&= f(x_{k-1}) - \lambda_{k-1} \inner{\nabla f(x_{k-1})-v_{k-1}}{G_{\lambda_{k-1}}(x_{k-1}, v_{k-1})} -  \lambda_{k-1} \|G_{\lambda_{k-1}}(x_{k-1}, v_{k-1})\|^2  \nonumber\\ 
&\quad - \big(h(x_k) - h(x_{k-1}) \big) + \frac{L\lambda_{k-1}^2}{2}\|G_{\lambda_{k-1}}(x_{k-1}, v_{k-1})\|^2, \nonumber
\end{align}
where (i) follows from \Cref{aux: 4}. Rearranging the above inequality and using Cauchy-Swartz inequality yields that
\begin{align}
	\Psi(x_k) \le  \Psi(x_{k-1}) - \lambda_{k-1}(1-\frac{L\lambda_{k-1}}{2}) \|G_{\lambda_{k-1}}(x_{k-1}, v_{k-1})\|^2 + \lambda_{k-1} \|\nabla f(x_{k-1})-v_{k-1}\| \|G_{\lambda_{k-1}}(x_{k-1}, v_{k-1})\|. \label{eq: 15}
\end{align}

Note that
\begin{align}
\|\nabla f(x_{k-1}) - v_{k-1}\| &\le \|\nabla f(x_{k-1}) - \nabla f(z_{k-1})\| + \|\nabla f(z_{k-1}) - v_{k-1}\| \nonumber \\
&\overset{(i)}{\le} L\|x_{k-1} - z_{k-1}\| + \|\nabla f(z_{k-1}) - v_{k-1}\| \nonumber \\
&\overset{(ii)}{\le} L(1-\alpha_k) \|y_{k-1} - x_{k-1}\| + \|\nabla f(z_{k-1}) - v_{k-1}\| \nonumber,
\end{align}
where (i) uses the Lipschitz continuity of $\nabla f$ and (ii) follows from the update rule of the momentum scheme. Substituting the above inequality into \cref{eq: 15} yields that
\begin{align}
\Psi(x_k) &\le \Psi(x_{k-1}) - \lambda_{k-1}(1-\frac{L\lambda_{k-1}}{2})\|G_{\lambda_{k-1}}(x_{k-1}, v_{k-1})\|^2 + L\lambda_{k-1}(1-\alpha_k) \|G_{\lambda_{k-1}}(x_{k-1}, v_{k-1})\|\|y_{k-1} - x_{k-1}\| \nonumber\\
&\quad + \lambda_{k-1} \|G_{\lambda_{k-1}}(x_{k-1}, v_{k-1})\|\|\nabla f(z_{k-1}) - v_{k-1}\| \nonumber \\
&\le \Psi(x_{k-1}) - \lambda_{k-1}(1-\frac{L\lambda_{k-1}}{2})\|G_{\lambda_{k-1}}(x_{k-1}, v_{k-1})\|^2 + \frac{L\lambda_{k-1}^2}{2} \|G_{\lambda_{k-1}}(x_{k-1}, v_{k-1})\|^2 \nonumber\\ 
&\qquad+ \frac{L(1-\alpha_k)^2}{2}\|y_{k-1} - x_{k-1}\|^2 
+ \frac{\lambda_{k-1}}{2} \|G_{\lambda_{k-1}}(x_{k-1}, v_{k-1})\|^2 + \frac{\lambda_{k-1}}{2}\|\nabla f(z_{k-1}) - v_{k-1}\|^2 \nonumber \\
&= \Psi(x_{k-1}) - \lambda_{k-1}(\frac{1}{2}-L\lambda_{k-1}) \|G_{\lambda_{k-1}}(x_{k-1}, v_{k-1})\|^2 + \frac{L(1-\alpha_k)^2}{2}\|y_{k-1} - x_{k-1}\|^2 \nonumber\\
&\quad+ \frac{\lambda_{k-1}}{2}\|\nabla f(z_{k-1}) - v_{k-1}\|^2 \nonumber\\
&\le  \Psi(x_{k-1}) - \lambda_{k-1}(\frac{1}{2}-L\lambda_{k-1}) \|G_{\lambda_{k-1}}(x_{k-1}, v_{k-1})\|^2 + \frac{L\Gamma_{k-1}}{2}\sum_{t=1}^{k-1} \frac{\lambda_{t-1} - \beta_{t-1}}{\alpha_t\Gamma_t} \|G_{\lambda_{t-1}}(x_{t-1}, v_{t-1})\|^2 \nonumber\\
&\quad+ \frac{\lambda_{k-1}}{2}\|\nabla f(z_{k-1}) - v_{k-1}\|^2, \nonumber
\end{align}
where the last inequality uses item 2 of \Cref{aux: 3} and the fact that $0<\alpha_k <1$.
Telescoping the above inequality over $k$ from $1$ to $K$ yields that
\begin{align}
\Psi(x_{K}) &\le \Psi(x_{0}) - \sum_{k=0}^{K-1} \lambda_k (\frac{1}{2}-L\lambda_k) \|G_{\lambda_{k}}(x_{k}, v_{k})\|^2 + \sum_{k=0}^{K-1} \frac{L\Gamma_{k}}{2}\sum_{t=0}^{k-1} \frac{(\lambda_{t} - \beta_{t})^2}{\Gamma_{t+1}\alpha_{t+1}}  \|G_{\lambda_{t}}(x_{t}, v_{t})\|^2 \nonumber\\
&\quad+ \sum_{k=0}^{K-1} \frac{\lambda_k}{2}\|\nabla f(z_{k}) - v_k\|^2 \nonumber \\
&= \Psi(x_{0}) - \sum_{k=0}^{K-1} \lambda_k (\frac{1}{2}-L\lambda_k) \|G_{\lambda_{k}}(x_{k}, v_{k})\|^2 + \frac{L}{2} \sum_{k=0}^{K-1} \frac{(\lambda_k - \beta_k)^2}{\Gamma_{k+1}\alpha_{k+1}} \|G_{\lambda_{k}}(x_{k}, v_{k})\|^2(\sum_{t=k}^{K-1} \Gamma_{t})  \nonumber\\
&\quad+ \sum_{k=0}^{K-1} \frac{\lambda_k}{2}\|\nabla f(z_{k}) - v_k\|^2, \label{eq: 30}
\end{align}
where we have exchanged the order of summation in the second equality. Furthermore, note that $\sum_{t=k}^{K-1} \Gamma_{t} = 2\sum_{t=k}^{K-1} \frac{1}{\ceil[]{t/q}} - \frac{1}{\ceil[]{t/q}+1} \le \frac{2}{\ceil[]{k/q}}$. Then, substituting this bound into the above inequality and taking expectation on both sides yield that
\begin{align}
\mathbb{E}[\Psi(x_{K})] &\le \Psi(x_{0}) - \sum_{k=0}^{K-1} \lambda_k (\frac{1}{2}-L\lambda_k) \mathbb{E}\|G_{\lambda_{k}}(x_{k}, v_{k})\|^2 + \frac{L}{2} \sum_{k=0}^{K-1} \frac{2(\lambda_k - \beta_k)^2}{\ceil[]{k/q}\Gamma_{k+1}\alpha_{k+1}} \mathbb{E}\|G_{\lambda_{k}}(x_{k}, v_{k})\|^2  \nonumber\\
&\quad+ \sum_{k=0}^{K-1} \frac{\lambda_k}{2} \mathbb{E}\|\nabla f(z_{k}) - v_k\|^2. \label{eq: 16}
\end{align}

Next, we bound the term $\mathbb{E}\|\nabla f(z_{k}) - v_k\|^2$ in the above inequality. By \Cref{aux: 1} we obtain that
\begin{align}
\mathbb{E}\|\nabla f(z_{k}) - v_k\|^2 &\le \sum_{i=(\tau(k)-1)q}^{k-1}\frac{L^2}{|\xi_i|} \mathbb{E}\|z_{i+1} - z_{i}\|^2 \nonumber \\
&\le \sum_{i=(\tau(k)-1)q}^{k-1}\frac{L^2}{|\xi_i|} \big[2\beta_i^2\|G_{\lambda_{i}}(x_{i}, v_{i})\|^2 + 2\alpha_{i+2}^2 \Gamma_{i+1}\sum_{t=0}^{i} \frac{(\lambda_{t} - \beta_{t})^2}{\alpha_{t+1}\Gamma_{t+1}} \|G_{\lambda_{t}}(x_{t}, v_{t})\|^2 \big], \label{eq: 17}
\end{align}
where the last inequality uses item 3 of \Cref{aux: 3}. Substituting \cref{eq: 17} into \cref{eq: 16} and simplifying yield that
\begin{align}
\mathbb{E}[\Psi(x_{K})] &\le \Psi(x_{0}) - \sum_{k=0}^{K-1} \Big[\lambda_k (\frac{1}{2}-L\lambda_k) - \frac{L(\lambda_k - \beta_k)^2}{\ceil[]{k/q}\Gamma_{k+1}\alpha_{k+1}}\Big] \mathbb{E}\|G_{\lambda_{k}}(x_{k}, v_{k})\|^2 \nonumber\\
&\quad + \underbrace{\sum_{k=0}^{K-1} \frac{\lambda_k}{2} \mathbb{E}\bigg[\sum_{i=(\tau(k)-1)q}^{k-1}\frac{L^2}{|\xi_i|} \bigg[2\beta_i^2\|G_{\lambda_{i}}(x_{i}, v_{i})\|^2 + 2\alpha_{i+2}^2 \Gamma_{i+1}\sum_{t=0}^i \frac{(\lambda_t - \beta_t)^2}{\alpha_{t+1}\Gamma_{t+1}} \|G_{\lambda_{t}}(x_{t}, v_{t})\|^2 \bigg] \bigg]}_{T}. \label{eq: 18}
\end{align}
Before we proceed the proof, we first specify the choices of all the parameters. Specifically, we choose a constant mini-batch size $|\xi_k|\equiv |\xi|$, a constant $q=|\xi|$, a constant $\beta_k \equiv \beta>0$, $\lambda_k \in [\beta, (1+\alpha_{k+1})\beta]$. Based on these parameter settings, the term $T$ in the above inequality can be bounded as follows.
\begin{align}
T &\overset{(i)}{\le} \sum_{k=0}^{K-1} \frac{\lambda_k}{2} \mathbb{E}\bigg[\sum_{i=(\tau(k)-1)q}^{\tau(k)q-1}\frac{L^2}{|\xi_i|} \bigg[2\beta_i^2\|G_{\lambda_{i}}(x_{i}, v_{i})\|^2 + 2\alpha_{i+2}^2 \Gamma_{i+1}\sum_{t=0}^{k-1} \frac{(\lambda_t - \beta_t)^2}{\alpha_{t+1}\Gamma_{t+1}} \|G_{\lambda_{t}}(x_{t}, v_{t})\|^2 \bigg] \bigg] \nonumber\\
&\overset{(ii)}{\le} \sum_{k=0}^{K-1} \frac{\lambda_k L^2q\beta^2}{|\xi|}\mathbb{E}\|G_{\lambda_{k}}(x_{k}, v_{k})\|^2 + \sum_{k=0}^{K-1} \frac{2\lambda_kL^2}{|\xi|\tau(k)^3} \sum_{t=0}^{k-1}\frac{(\lambda_t - \beta_t)^2}{\alpha_{t+1}\Gamma_{t+1}}\mathbb{E}\|G_{\lambda_{t}}(x_{t}, v_{t})\|^2  \nonumber\\
&\overset{(iii)}{\le} \sum_{k=0}^{K-1} \lambda_k L^2\beta^2 \mathbb{E}\|G_{\lambda_{k}}(x_{k}, v_{k})\|^2 + \frac{2L^2\beta^2}{|\xi|} \sum_{k=0}^{K-1}  \frac{\alpha_{k+1}}{\Gamma_{k+1}} \mathbb{E}\|G_{\lambda_{k}}(x_{k}, v_{k})\|^2 (\sum_{t=k}^{K-1} \frac{\lambda_k}{\tau(t)^3})   \nonumber\\
&\overset{(iv)}{\le} \sum_{k=0}^{K-1} \lambda_k L^2\beta^2 \mathbb{E}\|G_{\lambda_{k}}(x_{k}, v_{k})\|^2 + \frac{4L^2\beta^3}{|\xi|} \sum_{k=0}^{K-1} (\ceil[]{k/q}+1) \mathbb{E}\|G_{\lambda_{k}}(x_{k}, v_{k})\|^2 (\sum_{t=(\tau(k)-1)q}^{\tau(K)q} \frac{1}{\tau(t)^3})   \nonumber\\
&= \sum_{k=0}^{K-1} \lambda_k L^2\beta^2 \mathbb{E}\|G_{\lambda_{k}}(x_{k}, v_{k})\|^2 + \frac{4L^2\beta^3}{|\xi|} \sum_{k=0}^{K-1} (\ceil[]{k/q}+1) \mathbb{E}\|G_{\lambda_{k}}(x_{k}, v_{k})\|^2 (\sum_{t=\tau(k)-1}^{\tau(K)} \frac{q}{(t+1)^3})  \nonumber\\
&\le \sum_{k=0}^{K-1} \lambda_k L^2\beta^2 \mathbb{E}\|G_{\lambda_{k}}(x_{k}, v_{k})\|^2 + 2L^2\beta^3 \sum_{k=0}^{K-1} (\ceil[]{k/q}+1) \mathbb{E}\|G_{\lambda_{k}}(x_{k}, v_{k})\|^2 \frac{1}{\tau(k)^2}   \nonumber \\
&\overset{(v)}{\le} \sum_{k=0}^{K-1} \lambda_k L^2\beta^2 \mathbb{E}\|G_{\lambda_{k}}(x_{k}, v_{k})\|^2 + 2L^2\beta^3 \sum_{k=0}^{K-1} \mathbb{E}\|G_{\lambda_{k}}(x_{k}, v_{k})\|^2 \frac{\ceil[]{k/q}+1}{\tau(k)^2} \nonumber\\
&\le \sum_{k=0}^{K-1} \lambda_k L^2\beta^2 \mathbb{E}\|G_{\lambda_{k}}(x_{k}, v_{k})\|^2 + 2L^2\beta^3 \sum_{k=0}^{K-1} \mathbb{E}\|G_{\lambda_{k}}(x_{k}, v_{k})\|^2, \label{eq: 19}
\end{align}
where (i) follows from the facts that $i\le k-1$ and $k-1\le \tau(k)q-1$, (ii) uses the fact that $\sum_{i=(\tau(k)-1)q}^{\tau(k)q-1} \alpha_{i+2}^2\Gamma_{i+1} \le \frac{2}{\tau(k)^3}$, (iii) uses the parameter settings $q=|\xi|$ and $\lambda_t-\beta_t \le \alpha_t\beta$, (iv) uses the facts that $\lambda_k\le 2\beta$ and $(\tau(k)-1)q \le k\le \tau(k)q$ and (v) uses the fact that $k \le \tau(k)q-1$. Substituting the above inequality into \cref{eq: 18} and simplifying, we obtain that
\begin{align}
\mathbb{E}[\Psi(x_{K})] &\le \Psi(x_{0}) - \sum_{k=0}^{K-1} \Big[\lambda_k (\frac{1}{2}-L\lambda_k - L^2\beta^2) - \frac{L(\lambda_k - \beta_k)^2}{\ceil[]{k/q}\Gamma_{k+1}\alpha_{k+1}} -2L^2\beta^3 \Big] \mathbb{E}\|G_{\lambda_{k}}(x_{k}, v_{k})\|^2 \label{eq: 31}\\
&\le \Psi(x_{0}) - \sum_{k=0}^{K-1} \Big[\beta (\frac{1}{2}-2L\beta - L^2\beta^2) - L\beta^2 -2L^2\beta^3 \Big] \mathbb{E}\|G_{\lambda_{k}}(x_{k}, v_{k})\|^2. \label{eq: 32}
\end{align}
Choosing $\beta \le \frac{1}{8L}$, the above inequality further implies that
\begin{align}
\mathbb{E} [\Psi(x_{K})] &\le \Psi(x_{0}) - \sum_{k=0}^{K-1} \frac{\beta}{16}\mathbb{E}\|G_{\lambda_{k}}(x_{k}, v_{k})\|^2 . \label{eq: 27}
\end{align}
Then, it follows that $\frac{1}{K}\sum_{k=0}^{K-1}\mathbb{E}\|G_{\lambda_{k}}(x_{k}, v_{k})\|^2 \le 16(\Psi(x_0) - \Psi^*)/(K\beta)$. Next, we bound the term $\mathbb{E}  \|G_{\lambda_{\zeta}}(z_{\zeta}, \nabla f(z_{\zeta}))\|^2$, where $\zeta$ is selected uniformly at random from $\{0,\ldots,K-1\}$. Observe that
\begin{align}
\mathbb{E}  \|G_{\lambda_{\zeta}}(z_{\zeta}, \nabla f(z_{\zeta}))\|^2  &= \mathbb{E}  \|G_{\lambda_{\zeta}}(z_{\zeta}, \nabla f(z_{\zeta})) -G_{\lambda_{\zeta}}(z_{\zeta}, v_{\zeta}) + G_{\lambda_{\zeta}}(z_{\zeta}, v_\zeta)\|^2 \nonumber\\
&\le 2\mathbb{E}  \|G_{\lambda_{\zeta}}(z_{\zeta}, \nabla f(z_{\zeta})) -G_{\lambda_{\zeta}}(z_{\zeta}, v_{\zeta})\|^2 + 2\mathbb{E}  \| G_{\lambda_{\zeta}}(z_{\zeta}, v_\zeta)\|^2 \nonumber\\
&\overset{(i)}{\le} 2\mathbb{E}  \|\nabla f(z_{\zeta}) - v_{\zeta}\|^2 + 2\mathbb{E}  \| G_{\lambda_{\zeta}}(z_{\zeta}, v_\zeta) - G_{\lambda_{\zeta}}(x_{\zeta}, v_\zeta) + G_{\lambda_{\zeta}}(x_{\zeta}, v_\zeta)\|^2 \nonumber\\
&\le 2\mathbb{E}  \|\nabla f(z_{\zeta}) - v_{\zeta}\|^2 + 4\mathbb{E}  \| G_{\lambda_{\zeta}}(z_{\zeta}, v_\zeta) - G_{\lambda_{\zeta}}(x_{\zeta}, v_\zeta)\|^2  + 4\mathbb{E}\|G_{\lambda_{\zeta}}(x_{\zeta}, v_\zeta)\|^2 \nonumber\\
&\le 2\mathbb{E}  \|\nabla f(z_{\zeta}) - v_{\zeta}\|^2 + 4\mathbb{E}\|G_{\lambda_{\zeta}}(x_{\zeta}, v_\zeta)\|^2 \nonumber\\
&\quad+ \frac{4}{\lambda_{\zeta}^2}\mathbb{E}  \| z_{\zeta} - x_{\zeta} + \mathrm{prox}_{\lambda_{\zeta}h}(x_\zeta - \lambda_{\zeta}v_\zeta) - \mathrm{prox}_{\lambda_{\zeta}h}(z_\zeta - \lambda_{\zeta}v_\zeta)\|^2   \nonumber\\
&\le 2\mathbb{E}  \|\nabla f(z_{\zeta}) - v_{\zeta}\|^2 + 4\mathbb{E}\|G_{\lambda_{\zeta}}(x_{\zeta}, v_\zeta)\|^2 \nonumber\\
&\quad+ \frac{8}{\lambda_{\zeta}^2}\mathbb{E}  \| z_{\zeta} - x_{\zeta}\|^2 +\frac{8}{\lambda_{\zeta}^2}\mathbb{E}  \|\mathrm{prox}_{\lambda_{\zeta}h}(x_\zeta - \lambda_{\zeta}v_\zeta) - \mathrm{prox}_{\lambda_{\zeta}h}(z_\zeta - \lambda_{\zeta}v_\zeta)\|^2   \nonumber\\
&\overset{(ii)}{\le} 2\mathbb{E}  \|\nabla f(z_{\zeta}) - v_{\zeta}\|^2 + 4\mathbb{E}\|G_{\lambda_{\zeta}}(x_{\zeta}, v_\zeta)\|^2 + \frac{8}{\lambda_{\zeta}^2}\mathbb{E}  \| z_{\zeta} - x_{\zeta}\|^2 +\frac{8}{\lambda_{\zeta}^2}\mathbb{E}  \|z_{\zeta} - x_{\zeta}\|^2   \nonumber\\
&\overset{(iii)}{\le} 2\mathbb{E}  \|\nabla f(z_{\zeta}) - v_{\zeta}\|^2 + 4\mathbb{E}\|G_{\lambda_{\zeta}}(x_{\zeta}, v_\zeta)\|^2 + \frac{16}{\lambda_{\zeta}^2}\mathbb{E}  \| y_{\zeta} - x_{\zeta}\|^2 \label{eq: 29}
\end{align}
where (i) uses the non-expansiveness property of the operator $G$ in \Cref{aux: 4}, (ii) follows from the non-expansiveness of the proximal operator, and (iii) uses the update rule and the fact that $0<\alpha_k<1$.

Next, we bound the three terms on the right hand side of the above inequality separately. First, note that
\begin{align*}
\mathbb{E} \|G_{\lambda_{\zeta}}(x_{\zeta}, v_\zeta)\|^2  = \frac{1}{K }\sum_{k=0}^{K-1} \mathbb{E} \|G_{\lambda_{k}}(x_{k}, v_k)\|^2 \le \frac{16(\Psi(x_0) - \Psi^*)}{K\beta}. 
\end{align*}
Second, note that \cref{eq: 17} implies that
\begin{align*}
& \mathbb{E} \|\nabla f(z_\zeta)-v_\zeta\|^2  \nonumber \\
&\le \mathbb{E}\sum_{i=(\tau(\zeta)-1)q}^{\zeta-1}\frac{L^2}{|\xi_i|} \big[2\beta_i^2\|G_{\lambda_{i}}(x_{i}, v_i)\|^2 + 2\alpha_{i+2}^2 \Gamma_{i+1}\sum_{t=0}^{i} \frac{(\lambda_{t} - \beta_{t})^2}{\alpha_{t+1}\Gamma_{t+1}} \|G_{\lambda_{t}}(x_{t}, v_t)\|^2 \big] \nonumber\\
&\le \frac{2L^2\beta^2}{|\xi|} \mathbb{E} \bigg(\sum_{i=(\tau(\zeta)-1)q}^{\tau(\zeta)q-1} \|G_{\lambda_{i}}(x_{i}, v_i)\|^2\bigg) + \frac{L^2}{|\xi|} \mathbb{E} \bigg(\sum_{i=(\tau(\zeta)-1)q}^{\zeta-1} 2\alpha_{i+2}^2 \Gamma_{i+1}\sum_{t=0}^{i} \frac{(\lambda_{t} - \beta_{t})^2}{\alpha_{t+1}\Gamma_{t+1}} \|G_{\lambda_{t}}(x_{t}, v_t)\|^2 \bigg) \nonumber\\
&\le \frac{2L^2\beta^2}{|\xi|K} \sum_{\zeta=0}^{K-1} \bigg(\sum_{i=(\tau(\zeta)-1)q}^{\tau(\zeta)q-1} \mathbb{E}\|G_{\lambda_{i}}(x_{i}, v_i)\|^2\bigg)+ \frac{L^2 \beta^2}{|\xi|K}  \sum_{\zeta=0}^{K-1} \bigg(\sum_{i=(\tau(\zeta)-1)q}^{\tau(\zeta)q-1} 2\alpha_{i+2}^2 \Gamma_{i+1}\sum_{t=0}^{\zeta-1} (t+1) \mathbb{E}\|G_{\lambda_{t}}(x_{t}, v_t)\|^2 \bigg) \nonumber\\
&\le \frac{2L^2\beta^2 q}{|\xi|} \frac{1}{K} \sum_{\zeta=0}^{K-1} \mathbb{E}\|G_{\lambda_{\zeta}}(x_{\zeta}, v_\zeta)\|^2 + \frac{L^2 \beta^2}{|\xi|} \frac{1}{K} \sum_{\zeta=0}^{K-1} \bigg(\frac{4}{\tau(\zeta)^3}\sum_{t=0}^{\zeta-1} (\ceil[]{t/q}+1) \mathbb{E}\|G_{\lambda_{t}}(x_{t}, v_t)\|^2 \bigg) \nonumber\\
&\le 2L^2\beta^2 \bigg(\frac{1}{K} \sum_{\zeta=0}^{K-1} \mathbb{E}\|G_{\lambda_{\zeta}}(x_{\zeta}, v_\zeta)\|^2\bigg) + \frac{L^2 \beta^2}{|\xi|} \frac{1}{K} \sum_{\zeta=0}^{K-1} (\ceil[]{\zeta/q}+1) \mathbb{E}\|G_{\lambda_{\zeta}}(x_{\zeta}, v_\zeta)\|^2 \sum_{t=\zeta}^{K-1} \frac{4}{\tau(t)^3} \nonumber\\
&\le 2L^2\beta^2 \bigg(\frac{1}{K} \sum_{\zeta=0}^{K-1} \mathbb{E}\|G_{\lambda_{\zeta}}(x_{\zeta}, v_\zeta)\|^2\bigg) + L^2 \beta^2 \frac{1}{K} \sum_{\zeta=0}^{K-1} \mathbb{E}\|G_{\lambda_{\zeta}}(x_{\zeta}, v_\zeta)\|^2 \frac{2(\ceil[]{\zeta/q}+1)}{\tau(\zeta)]^2} \nonumber\\
&\le 3L^2\beta \frac{16(\Psi(x_0) - \Psi^*)}{K}, \nonumber
\end{align*} 
where we have used the fact that $\zeta$ is sampled uniformly from $0,...,K-1$ at random. 

Third, note that by item 2 of \Cref{aux: 3}, we know that
\begin{align}
	\mathbb{E}  \| y_{\zeta} - x_{\zeta}\|^2 &\le \mathbb{E} \bigg(\Gamma_\zeta\sum_{t=0}^{\zeta-1} \frac{\lambda_{t} - \beta_{t}}{\alpha_{t+1}\Gamma_{t+1}}  \|G_{\lambda_{t}}(x_{t}, v_{t})\|^2\bigg) \nonumber\\
	&\le \frac{1}{K} \sum_{\zeta=0}^{K-1} \Gamma_\zeta\sum_{t=0}^{\zeta-1} \frac{\lambda_{t} - \beta_{t}}{\alpha_{t+1}\Gamma_{t+1}} \mathbb{E}  \|G_{\lambda_{t}}(x_{t}, v_{t})\|^2 \nonumber\\
	&\le \frac{\beta^2}{K} \sum_{\zeta=0}^{K-1} \Gamma_\zeta\sum_{t=0}^{\zeta-1} (\ceil[]{t/q}+1) \mathbb{E}  \|G_{\lambda_{t}}(x_{t}, v_{t})\|^2 \nonumber\\
	&= \frac{\beta^2}{K} \sum_{\zeta=0}^{K-1} (\ceil[]{\zeta/q}+1) \mathbb{E}  \|G_{\lambda_{\zeta}}(x_{\zeta}, v_{\zeta})\|^2 \bigg(\sum_{t=\zeta+1}^{K-1}\Gamma_t \bigg) \nonumber\\
	&\le \frac{\beta^2}{K} \sum_{\zeta=0}^{K-1} \mathbb{E}  \|G_{\lambda_{\zeta}}(x_{\zeta}, v_{\zeta})\|^2.
\end{align}

Combining the above three inequalities and note that $L\beta = \Theta(1)$ and $\frac{\beta}{\lambda_k} \le 1$, we finally obtain that
\begin{align}
\mathbb{E}  \|G_{\lambda_{\zeta}}(z_{\zeta}, \nabla f(z_{\zeta}))\|^2  \le \mathcal{O}\bigg(\frac{L(\Psi(x_0) - \Psi^*)}{K} \bigg) . \label{eq: 20}
\end{align}
This further implies that
\begin{align*}
\mathbb{E}  \|G_{\lambda_{\zeta}}(z_{\zeta}, \nabla f(z_{\zeta}))\| \le \sqrt{\mathbb{E}  \|G_{\lambda_{\zeta}}(z_{\zeta}, \nabla f(z_{\zeta}))\|^2} \le \mathcal{O}\bigg(\sqrt{\frac{L(\Psi(x_0) - \Psi^*)}{K}}\bigg). 
\end{align*}
Setting the right hand side of the above inequality to be bounded by $\epsilon$, we obtain that $K \ge \mathcal{O}\bigg(\frac{L(\Psi(x_0) - \Psi^*)}{\epsilon^2} \bigg)$. Then, the total number of stochastic gradient calls is bounded by $(K+q)\frac{n}{q} + K|\xi| \le \mathcal{O}(n+\sqrt{n}\epsilon^{-2})$.

\subsection{Proof of \Cref{thm: ProxSpiderO}}

The proof follows exactly from that of \Cref{thm: ProxSpiderM} (the same treatment of the momentum schemes applies).

\end{document}